\documentclass[a4paper]{article}

\usepackage{authblk}
\usepackage[T1]{fontenc}
\usepackage[latin1]{inputenc}
\usepackage{longtable}
\usepackage{textcomp}
\usepackage{amsmath,amssymb,amsthm}
\usepackage{mathtools}
\usepackage{graphicx}
\usepackage{xcolor}
\usepackage{times}
\usepackage{graphicx}
\usepackage{dsfont}
\usepackage{xspace}
\usepackage{color}
\usepackage[hyphens]{url}
\usepackage{enumitem}
\usepackage[linesnumbered,ruled,vlined]{algorithm2e}
\usepackage{tikz,pgfplots}
\usepackage[left=2.5cm,right=2.5cm,top=2cm,bottom=3cm]{geometry}
\usepackage{thmtools}
\usepackage{arydshln}
\usepackage{multirow}
\usepackage{placeins} \usepackage{gensymb} \usepackage{mathrsfs}
\usepackage{lettrine}
\usepackage{marginnote}
\usepackage[nottoc,numbib]{tocbibind}
\usepackage{multirow}
\usepackage{epstopdf}
\usepackage{float}
\usepackage[normalem]{ulem}
\pgfplotsset{compat=1.9}

\pgfplotsset{
        colormap={parula}{
            rgb255=(53,42,135)
            rgb255=(15,92,221)
            rgb255=(18,125,216)
            rgb255=(7,156,207)
            rgb255=(21,177,180)
            rgb255=(89,189,140)
            rgb255=(165,190,107)
            rgb255=(225,185,82)
            rgb255=(252,206,46)
            rgb255=(249,251,14)
        },
    }

\definecolor{myOrange}{RGB}{255, 169, 87 }
\definecolor{myGreen}{RGB}{180, 255, 162  }
\definecolor{myGrey}{RGB}{187, 187, 187  }
\definecolor{myDarkGrey}{RGB}{120, 120, 120  }

\usepackage{makeidx}

\makeindex
\usepackage[totoc]{idxlayout}

\usepackage[plainpages=false,pdfpagelabels,hidelinks]{hyperref}

\newcommand{\yz}{z}
\newcommand{\yc}{c}
\newcommand{\ypath}{\mathbf{y}}
\newcommand{\w}{\mathbf{w}}

\newcommand{\riemann}{\mathrm{R}}
\newcommand{\Riemann}{\mathbf{R}}

%%%%%%%%%%%%%%%%%%%%%%%%%%%%%%%%
%%%%%%%%%%%%%%%%%%%%%%%%%%%%%%%%
%%%%%%%%%%%%%%%%%%%%%%%%%%%%%%%%
%%%%%%%%%%%%%%%%%%%%%%%%%%%%%%%%
%%%%%%%%%%%%%%%%%%%%%%%%%%%%%%%%
%%%%%%%%%%%%%%%%%%%%%%%%%%%%%%%%
%%%%%%%%%%%%%%%%%%%%%%%%%%%%%%%%
%%%%%%%%%%%%%%%%%%%%%%%%%%%%%%%%

\def\XXint#1#2#3{{\setbox0=\hbox{$#1{#2#3}{\int}$}
\vcenter{\hbox{$#2#3$}}\kern-.5\wd0}}

\makeatletter
\DeclareRobustCommand\onedot{\futurelet\@let@token\@onedot}
\def\@onedot{\ifx\@let@token.\else.\null\fi\xspace}

\def\ie{\emph{i.e}\onedot} 
\def\cf{\emph{cf}\onedot}

\def\etal{\emph{et al}\onedot}

\makeatother

% \theoremstyle{plain}% default
% \newtheorem{theorem}{Theorem}[section]
% \newtheorem{lemma}[theorem]{Lemma}
% \newtheorem{definitionLemma}[theorem]{Definition and Lemma}
% \newtheorem{proposition}[theorem]{Proposition}
% \newtheorem{corollary}[theorem]{Corollary}
% \newtheorem{conjecture}[theorem]{Conjecture}
% 
% \theoremstyle{remark}
% \newtheorem{remark}[theorem]{Remark}
% \newtheorem{notation}[theorem]{Notation}
% 
% \theoremstyle{definition}
% \newtheorem{definition}[theorem]{Definition}
% \newtheorem{assumption}[theorem]{Assumption}
% 
% 
% \newenvironment{claim}[1]{\par\noindent\textit{Claim:}\space #1}{}

%%%%%%%%%%%%%%%%%%%%%%%%%%%%%%%%
%%%%%%%%%%%%%%%%%%%%%%%%%%%%%%%%
%%%%%%%%%%%%%%%%%%%%%%%%%%%%%%%%
%%%%%%%%%%%%%%%%%%%%%%%%%%%%%%%%
%%%%%%%%%%%%%%%%%%%%%%%%%%%%%%%%
%%%%%%%%%%%%%%%%%%%%%%%%%%%%%%%%
%%%%%%%%%%%%%%%%%%%%%%%%%%%%%%%%
%%%%%%%%%%%%%%%%%%%%%%%%%%%%%%%%
%%%%%%%%%%%%%%%%%%%%%%%%%%%%%%%%

% general definitions
\newcommand{\R}{\mathbb{R}}
\newcommand{\N}{\mathbb{N}}

 % identity

 % characteristic to [0,1]
 % indicator set to [0,\infty]

\newcommand\restr[2]{{\left.\kern-\nulldelimiterspace #1 \vphantom{\big|}\right|_{#2}}}

\newcommand{\tr}{\mathrm{tr}} % trace

\renewcommand{\d}{\,\mathrm{d}} % differential

\newcommand{\dist}{\mathrm{dist}}
 % divergence
 % weak convergence
 % Landau symbol O()
 % Landau symbol O()

%%%%%%%%%%%%%%%%%%%%%%%%%%%%%%%%
%%%%%%%%%%%%%%%%%%%%%%%%%%%%%%%%
%%%%%%%%%%%%%%%%%%%%%%%%%%%%%%%%

% elasticity

%%%%%%%%%%%%%%%%%%%%%%%%%%%%%%%%
%%%%%%%%%%%%%%%%%%%%%%%%%%%%%%%%
%%%%%%%%%%%%%%%%%%%%%%%%%%%%%%%%

% general geometric definitions

%%%%%%%%%%%%%%%%%%%%%%%%%%%%%%%%
%%%%%%%%%%%%%%%%%%%%%%%%%%%%%%%%
%%%%%%%%%%%%%%%%%%%%%%%%%%%%%%%%

% generic (Banach) spaces

\newcommand{\Vspace}{\mathbf{V}}

\newcommand{\Yspace}{\mathbf{Y}}

\newcommand{\E}{{\mathbf{E}}}

%%%%%%%%%%%%%%%%%%%%%%%%%%%%%%%%
%%%%%%%%%%%%%%%%%%%%%%%%%%%%%%%%
%%%%%%%%%%%%%%%%%%%%%%%%%%%%%%%%

% general energy/length/... definitions

\newcommand{\energy}{\mathcal{W}}

\newcommand{\pathenergy}{\mathcal{E}}
\newcommand{\Pathenergy}{\mathbf{E}}

\newcommand{\metric}{g}

\newcommand{\manifold}{\mathcal{M}}
\newcommand{\y}{y}

 % discrete velocity

%%%%%%%%%%%%%%%%%%%%%%%%%%%%%%%%
%%%%%%%%%%%%%%%%%%%%%%%%%%%%%%%%
%%%%%%%%%%%%%%%%%%%%%%%%%%%%%%%%

% geodesic calculus

 % discrete logarithm
 % discrete exponential
 % discrete exponential
\newcommand{\ptransport}{{\mathrm{P}}} % parallel transport
\newcommand{\Ptransport}{{\mathbf{P}}} % discrete parallel transport
 % connection
\newcommand{\cov}{{\frac{D}{\d t}}} %covariant derivative
\newcommand{\covt}{{\frac{D}{\d t}}} %covariant derivative
\newcommand{\covs}{{\frac{D}{\d s}}} %covariant derivative
\newcommand{\Covt}[1]{{\frac{\mathbf{{D}}^{{#1}}}{\d t}}} %discrete covariant t derivative
\newcommand{\Covs}[1]{{\frac{\mathbf{D}^{{#1}}}{\d s}}} %discrete covariant s derivative
 %discrete covariant t derivative

%%%%%%%%%%%%%%%%%%%%%%%%%%%%%%%%
%%%%%%%%%%%%%%%%%%%%%%%%%%%%%%%%
%%%%%%%%%%%%%%%%%%%%%%%%%%%%%%%%

% image world

 % the input image u_0

 % admissible set for the deformations

%%%%%%%%%%%%%%%%%%%%%%%%%%%%%%%%
%%%%%%%%%%%%%%%%%%%%%%%%%%%%%%%%
%%%%%%%%%%%%%%%%%%%%%%%%%%%%%%%%

% shape world

%%%%%%%%%%%%%%%%%%%%%%%%%%%%%%%%
%%%%%%%%%%%%%%%%%%%%%%%%%%%%%%%%
%%%%%%%%%%%%%%%%%%%%%%%%%%%%%%%%

% shell world

\newcommand{\shell}{s}

%%%%%%%%%%%%%%%%%%%%%%%%%%%%%%%%
%%%%%%%%%%%%%%%%%%%%%%%%%%%%%%%%
%%%%%%%%%%%%%%%%%%%%%%%%%%%%%%%%

% optimal transport world

%%%%%%%%%%%%%%%%%%%%%%%%%%%%%%%%
%%%%%%%%%%%%%%%%%%%%%%%%%%%%%%%%
%%%%%%%%%%%%%%%%%%%%%%%%%%%%%%%%

% helper functions

%%%%%%%%%%%%%%%%%%%%%%%%%%%%%%%%
%%%%%%%%%%%%%%%%%%%%%%%%%%%%%%%%
%%%%%%%%%%%%%%%%%%%%%%%%%%%%%%%%

% annotations

\newcommand{\mem}{\mbox{{\tiny mem}}}
\newcommand{\bend}{\mbox{{\tiny bend}}}

%%%%%%%%%%%%%%%%%%%%%%%%%%%%%%%%
%%%%%%%%%%%%%%%%%%%%%%%%%%%%%%%%
%%%%%%%%%%%%%%%%%%%%%%%%%%%%%%%%

\makeatletter
\DeclareRobustCommand\onedot{\futurelet\@let@token\@onedot}
\def\@onedot{\ifx\@let@token.\else.\null\fi\xspace}

\def\ie{\emph{i.e}\onedot} 
\def\cf{\emph{cf}\onedot}

\def\etal{\emph{et al}\onedot}

\makeatother

%%%%%%%%%%%%%%%%%%%%%%%%%%%%%%%%
%%%%%%%%%%%%%%%%%%%%%%%%%%%%%%%%
%%%%%%%%%%%%%%%%%%%%%%%%%%%%%%%%

\makeatletter
\def\namedlabel#1#2{\begingroup#2\def\@currentlabel{#2}\phantomsection\label{#1}\endgroup}
\makeatother

%%%%%%%%%%%%%%%%%%%%%%%%%%%%%%%%
%%%%%%%%%%%%%%%%%%%%%%%%%%%%%%%%
%%%%%%%%%%%%%%%%%%%%%%%%%%%%%%%%

% color definitions
\definecolor{gnuplotbrown}{RGB}{255,76,0}
\definecolor{gnuplotblue}{RGB}{0,0,255}
\definecolor{gnuplotred}{RGB}{255,0,0}
\definecolor{gnuplotpurple}{RGB}{255,2,255}
\definecolor{gnuplotgray}{RGB}{131,128,126}
\definecolor{uniblau}{HTML}{004291}
\definecolor{bigsblau}{HTML}{365079}
\definecolor{bigsblau50}{HTML}{9CA7BC}
\definecolor{bigsblau25}{HTML}{CDD3DD}
\definecolor{uniorangedark}{HTML}{E6B400}
\definecolor{uniorange}{HTML}{FFCB0E}
\definecolor{uniorange!50}{HTML}{FFE586}
\definecolor{uniwhite}{HTML}{FFF2C2}
\definecolor{hcmgruen}{HTML}{567877}
\definecolor{hcmgruen50}{HTML}{AFBDBE}
\definecolor{hcmgruen25}{HTML}{D7DEDE}
\definecolor{himgrau}{HTML}{626566}
\definecolor{himgrau75}{HTML}{949592}
\definecolor{himgrau50}{HTML}{C5C4BE}
\definecolor{himgrau25}{HTML}{F5F5F5}
\definecolor{textgrau}{HTML}{000000}
\definecolor{black}{HTML}{000000}
\definecolor{white}{HTML}{FFFFFF}
\definecolor{lightgray}{gray}{0.9}
\definecolor{lila}{HTML}{FF34B3}
\definecolor{lightblue}{HTML}{99D6FF}
\colorlet{blue}{uniblau}
\colorlet{greyblue}{bigsblau}
\colorlet{hcmgelb}{uniorange}
\colorlet{yellow}{uniorange}
\colorlet{tafelgruen}{hcmgruen}
\colorlet{green}{hcmgruen}

\newsavebox\MBox

\theoremstyle{plain}\newtheorem{theorem}{Theorem}[section]

\newtheorem{corollary}[theorem]{Corollary}

\theoremstyle{remark}

\theoremstyle{definition}

\numberwithin{equation}{section}

\newcommand{\notinclude}[1]{}
\newcommand{\myArg}[2]{[#1, #2]}

\begin{document}

\title{Consistent Curvature Approximation\\ on Riemannian Shape Spaces}

\author[$\dagger$]{Alexander Effland}
\author[$\star$]{Behrend Heeren}
\author[$\star$]{Martin Rumpf}
\author[$\ddag$]{Benedikt Wirth}

\affil[$\dagger$]{Institute of Computer Graphics and Vision, Graz University of Technology, Graz, Austria}
\affil[$\star$]{Institute for Numerical Simulation, University of Bonn, Bonn, Germany}
\affil[$\ddag$]{Applied Mathematics, University of M\"unster, M\"unster, Germany}
\affil[ \ ]{Email: benedikt.wirth@uni-muenster.de}

\date{}

\maketitle

\begin{abstract}
We describe how to approximate the Riemann curvature tensor as well as sectional curvatures on possibly infinite-dimensional shape spaces that can be thought of as Riemannian manifolds. 
To this end, we extend the variational time discretization of geodesic calculus presented in \cite{RuWi12b}, 
which just requires an approximation of the squared Riemannian distance that is typically easy to compute. 
First we obtain first order discrete covariant derivatives via a Schild's ladder type discretization of parallel transport. 
Second order discrete covariant derivatives are then computed as nested first order discrete covariant derivatives.
These finally give rise to an approximation of the curvature tensor.
First and second order consistency are proven for the approximations of the covariant derivative and the curvature tensor.
The findings are experimentally validated on two-dimensional surfaces embedded in $\R^3$.
Furthermore, as a proof of concept the method is applied to the shape space of triangular meshes, and discrete sectional curvature indicatrices are computed on low-dimensional vector bundles. 
\end{abstract}

\section{Introduction}\label{sec:intro}
Over the last two decades there has been a growing interest in modeling general shape spaces as Riemannian manifolds. 
Applications range from medical imaging and computer vision to geometry processing in computer graphics. 
Besides the computation of a rigorous distance between shapes defined as the length of a geodesic curve, 
other tools from Riemannian geometry proved to be practically useful as well.
The geometric logarithm offers a representation of large scale shape variability in a linear tangent space,
the exponential map is a tool for shape extrapolation, and 
parallel transport allows to transfer edited details along animation sequences.

The Riemann curvature tensor represents a higher order characterization of the local geometry of the manifold.
In this paper, we  discuss how to numerically approximate the curvature tensor as well as the sectional curvature, \ie a generalization of the Gau\ss~curvature, on Riemannian manifolds. 
The method is based on a variational time discretization of the Riemannian path energy which has been developed in \cite{RuWi12b} 
and has previously been used to deduce a corresponding discretization of exponential map, logarithm, and  parallel transport. 
It requires a local approximation $\energy$ of the squared Riemannian distance $\dist$ such that
$\energy\myArg{\y}{\tilde\y}=\dist^2(\y,\tilde\y)+O(\dist^3(\y,\tilde\y))$ for $\y$ and $\tilde \y$ being points on the manifold.
In the case of shape spaces such approximations are usually substantially easier to compute 
than the underlying Riemannian metric and the squared distance itself. Our approximation of the Riemann curvature tensor is based on replacing the second order covariant derivatives in its definition by corresponding \emph{covariant difference quotients}.  
We prove the consistency of these difference quotients as well as the resulting approximation of the Riemann curvature tensor.
Furthermore, we experimentally validate the presented approach on embedded surfaces and show its applicability in a concrete example of a high-dimensional shape space. 
More precisely, we consider the space of triangular surfaces along with a physical deformation model. 
In the computer graphics community this type of model is known as \emph{Discrete Shells} and was first introduced by Grinspun \etal \cite{GrHiDeSc03}. 
This approach turned out to be very effective as well as efficient and has been used and modified by various authors. In particular, it has also been investigated in the context of Riemannian shape space theory in \cite{HeRuWa12,HeRuSc14}.
\bigskip

The paper is organized as follows: 
In Section \ref{sec:related} we discuss some examples of shape spaces and different approaches for the computation of curvature with special emphasis on shape spaces.
Section \ref{sec:review} reviews the discrete geodesic calculus from \cite{RuWi12b} which is the starting point of our computational scheme for the Riemann curvature tensor. 
Since the computation of the discrete curvature tensor is based on discrete covariant derivatives, 
we first investigate first and second order approximations of the covariant derivative in Section \ref{sec:cov}. 
Then, in Section \ref{sec:curvatureTensor} we prove consistency estimates for a corresponding first and second order approximation of the  curvature tensor.
Finally, Section \ref{sec:ShellSpace} discusses the application of our approach to the study of curvature on the space of discrete shells as a proof of concept.

\section{Related work} \label{sec:related}
Over the last two decades Riemannian calculus on shape spaces has attracted a lot of attention.  
In particular, it allows to transfer many important concepts from classical geometry to these usually high or even infinite-dimensional spaces.
\paragraph{Some examples of shape spaces.}
Prominent examples with a full-fledged geometric theory are spaces of planar curves with a curvature-based metric \cite{MiMu07}, 
an elastic metric \cite{SrJaJo06}, Sobolev-type metrics \cite{ChKePo05,MiMu07,SuYeMe07,MiMu07, Br16, BaBrHa17} or a Riemannian distance computed 
based on the square root velocity function on the curves \cite{SrKlJo11,Br16}.
Spaces of surfaces have also been considered as Riemannian manifolds.
Sobolev metrics on the space of surfaces have been presented by
Bauer \etal \cite{BaHaMi11}. In \cite{BaHaMi12} they extended the Riemannian approach for the space of curves from \cite{MiMu07} 
introducing a suitable metric on tangent spaces to the space of embeddings or immersions.
In the context of geometry processing Kilian \etal \cite{KiMiPo07}
studied geodesics in the space of triangulated surfaces, where the metric is derived from the in-plane membrane
distortion.  Since then, a variety of other Riemannian metrics have been investigated
on the space of surfaces \cite{LiShDi10,KuKlDi10,BaBr11}.
In~\cite{HeRuWa12,HeRuSc14} a metric was proposed that measures membrane distortion as well as bending. 
In image processing the large deformation diffeomorphic metric mapping (LDDMM) framework is based on the theory of diffeomorphic flows:
Dupuis \etal \cite{DuGrMi98} showed that the associated flow is actually a flow of diffeomorphisms.
In \cite{HaZaNi09}, Hart \etal exploited the optimal control perspective on the LDDMM model with the motion field as the underlying control.
Vialard \etal \cite{ViRiRu12a,ViRiRu12} studied methods from optimal control theory
to accurately estimate the initial momentum of the flow and to relate it to the Hamiltonian formulation of geodesics. 
Lorenzi and Pennec \cite{LoPe13} applied the \mbox{LDDMM} framework to compute geodesics and parallel transport using Lie group methods.
The metamorphosis model \cite{MiYo01,TrYo05} generalizes the flow of diffeomorphism approach by allowing additional intensity variations along the flow of diffeomorphically deformed images.
\medskip

In this paper we aim at the numerical computation of the Riemann curvature tensor and sectional curvatures on Riemannian manifolds. 
Our focus is on high-dimensional shape manifolds and not on low-dimensional embedded 
surfaces even though we show numerical experiments for a two-dimensional embedded surface to confirm our theoretical findings. 
Nevertheless, let us briefly review some of the numerous approaches for the approximation of curvature on two-dimensional discrete surfaces.

\paragraph{Numerical approximation of curvature on discrete 2D surfaces.}
One of the earliest approaches to numerically estimate the curvature tensor at the vertices of a polyhedral approximation of a surface was proposed by Taubin \cite{Ta95a}.
In detail, this ansatz exclusively works for embedded two-dimensional manifolds in $\R^3$ and relies on the observation that the directional curvature function
corresponds to a quadratic form, which can be estimated with linear complexity and from which the curvature tensor can be retrieved.
Starting from the triangulation of a two-dimensional manifold embedded in $\R^3$, Meyer~\etal \cite{MeDeScBa} defined discrete operators
representing, for instance, the mean and the Gaussian curvature by spatial averaging of suitably rescaled geometric quantities.
Using a mixed finite element/finite volume discretization as well as a Voronoi decomposition principal curvatures can be robustly estimated.
Cohen-Steiner and Morvan \cite{CoMo03} proposed an integral approximation of the curvature tensor on smooth or polyhedral surfaces
using normal cycles and proved its linear convergence if the polyhedral triangulation is Delaunay.
Hildebrandt~\etal \cite{HiPoWa06} showed that if a sequence of polyhedral surfaces isometrically embedded in $\R^3$ converges
to a differentiable manifold~$\manifold$ with respect to the Hausdorff distance, then the convergence of the normal field is equivalent to the convergence of the metric tensor, which
itself is equivalent to the convergence of the surface area. In addition, under suitable assumptions they established the convergence of geodesic curves
and the mean curvature functionals on discrete surfaces to their limit counterparts on $\manifold$.
In \cite{KaSiNo07}, Kalogerakis~\etal approximated the second fundamental form of a possibly noisy surface using M-estimation, which amounts to
an efficient data fitting approach using the method of iteratively reweighted least squares. Here, the surface is either represented by
polygonal meshes or by point clouds.
Hildebrandt and Polthier \cite{HiPo11} introduced generalized shape operators for smooth and polyhedral surfaces as linear operators
on Sobolev spaces and provided error estimates to approximate the generalized shape operator on smooth surfaces by that on polyhedral surfaces,
from which several geometric quantities can be recovered.
Starting from a weak formulation of the Ricci curvature, Fritz \cite{Fr13} employed a surface finite element method to approximate
the Ricci curvature on isometrically embedded hypersurfaces $\manifold$ and
proved that the rate of convergence is $\frac{2}{3}$ and $\frac{1}{3}$ with respect to the $L^2(\manifold)$- and $H^1(\manifold)$-norm, respectively.

\paragraph{Explicit computation of curvature on shape spaces.}
Michor and Mumford \cite{MiMu04} studied the Riemann curvature tensor 
on the space of smooth planar curves and gave an explicit formula for the sectional curvature. 
They could show that for large, smooth curves all sectional curvatures turn out to be nonnegative, 
whereas for curves with high frequency perturbation sectional curvatures are nonpositive.
Younes \etal \cite{YoMiSh08} investigated a metric on the space of plane curves derived as the limit case of 
a scale invariant metric of Sobolev order 1 from \cite{MiMu07} which allows the explicit computation of geodesics
and sectional curvature.
Micheli \etal \cite{MiMiMu12} showed how to compute sectional curvature on the space of landmarks with a metric induced by the flow of diffeomorphisms.
They were able to evaluate the sectional curvature for pairs of tangent directions to special geodesics along which only two landmarks actually move.
A formula for the derivatives of the inverse of the metric is at the core of this approach. 
In \cite{MiMiMu13} the formula for the sectional curvature on the landmark space was generalized to special infinite-dimensional weak Riemannian manifolds.
A brief overview over the geometry of shape spaces with a particular emphasis on sectional curvature in different shape spaces 
can be found in the contribution by Mumford \cite{Mu12}.

\section{A brief review of continuous and discrete geodesic calculus} \label{sec:review}
To keep this paper self-contained, we first collect all required ingredients of the discrete geodesic calculus introduced in \cite{RuWi12b}
and state our overall assumptions. Let us remark that some results reviewed here hold also under weaker assumptions.
For a comprehensive discussion we refer to \cite{RuWi12b}.
Let $\Vspace$ be a separable, reflexive Banach
space that is compactly embedded in a Banach space $\Yspace$. Let $\manifold$ be the weak closure of an open path-connected subset
of $\Vspace$, potentially with a smooth boundary. We consider $(\manifold, g)$ as a Riemannian manifold with metric
 $g : \manifold \times \Vspace\times\Vspace\to\R$, which is uniformly bounded and 
$\Vspace$-coercive in the sense $c^\ast \|v\|^2_\Vspace \leq g_\y(v,v) \leq C^\ast \|v\|^2_\Vspace$. Furthermore, $g$ is $C^3(\Yspace\times\Vspace\times\Vspace;\R)$-smooth.
Here, the tangent space $T_\y \manifold$ is identified with $\Vspace$.

\paragraph{Geodesic paths.} A geodesic path $\ypath=(\ypath(t))_{t\in [0,1]}$ on $\manifold$ is defined as a local minimizer of the path energy
\begin{align}\label{eq:contenergy}
\pathenergy[\ypath] = \int_0^1 g_{\ypath(t)}(\dot\ypath(t),\dot\ypath(t)) \d t
\end{align}
for fixed end positions $\ypath(0)=\y_A$ and $\ypath(1) = \y_B$ with $\y_A,\y_B\in\manifold$.  Then the Riemannian distance $\dist(\y_A,\y_B)$ is given as the square root of the minimal path energy.
Given $v\in \Vspace$, one may ask for the end point $\ypath(1)$ of a geodesic $(\ypath(t))_{t\in[0,1]}$ with $\ypath(0) = y$ and $\dot \ypath(0) = v$.
This map is called the exponential map with $\exp_y(v):= \ypath(1)$.

The discrete geodesic calculus is based on a local, usually easily computable approximation of the squared Riemannian distance $\dist^2$
by a smooth functional $\energy: \manifold\times\manifold \to \R$ with an extension of $\energy$ to $\Yspace\times\Yspace$ by $\infty$. 
$\energy$ is assumed to be weakly lower semi-continuous  and  locally consistent with the squared Riemannian distance in the sense 
\begin{align} 
\energy\myArg{\y}{\tilde\y}=\dist^2(\y,\tilde\y)+O(\dist^3(\y,\tilde\y))\,. \label{eq:consW}
\end{align}
The consistency implies
$\energy\myArg{\y}{\y} = 0\,,\quad \energy_{,2}\myArg{\y}{\y}(v) = 0\,,\quad \energy_{,22}\myArg{\y}{\y}(v,w) = 2 g_\y(v,w)$ for any $v,w\in\Vspace$.
Here, $\energy_{,j}$ denotes the derivative with respect to the $j$th component, and in analogy $\energy_{,ij}$ the second order derivative in the $i$th and $j$th component.
Furthermore, we also observe~\cite[Lemma 4.6]{RuWi12b} that $\energy_{,1}\myArg{\y}{\y}(v) = 0$ and
\begin{align}\label{eq:secondDerivIdentities}
 \energy_{,11}\myArg{\y}{\y}(v,w) = -\energy_{,12}\myArg{\y}{\y}(v,w) = -\energy_{,21}\myArg{\y}{\y}(v,w) = \energy_{,22}\myArg{\y}{\y}(v,w)\,.
\end{align}
Differentiating \eqref{eq:secondDerivIdentities} once again we achieve for any $u,v,w\in\Vspace$ that 
\begin{align}\label{eq:thirdDerivSymm}
\begin{split}
 &\energy_{,221}\myArg{\y}{\y}(u,v,w)+\energy_{,222}\myArg{\y}{\y}(u,v,w)=\energy_{,111}\myArg{\y}{\y}(u,v,w)+\energy_{,112}\myArg{\y}{\y}(u,v,w)\\
=&-\energy_{,121}\myArg{\y}{\y}(u,v,w)-\energy_{,122}\myArg{\y}{\y}(u,v,w)=-\energy_{,211}\myArg{\y}{\y}(u,v,w)-\energy_{,212}\myArg{\y}{\y}(u,v,w)\,.
\end{split}
\end{align}
With the function $\energy$ at hand one can define the discrete path energy 
\begin{align}
\E[(\y_0,\ldots,\y_K)] =K\sum_{k=1}^K \energy\myArg{\y_{k-1}}{\y_k} \label{eq:discretePathEnergy}
\end{align}
on a discrete $K$-path $(\y_0,\ldots,\y_K)\in\manifold^{K+1}$.
A \emph{discrete geodesic} of order $K$ (or $K$-geodesic) is then defined as a local minimizer of $\Pathenergy[(\y_0,\ldots,\y_K)]$ for fixed end points
$\y_0 = \y_A$ and $\y_K = \y_B$. The Euler--Lagrange conditions of a discrete geodesic are 
\begin{align}\label{eq:EL}
\energy_{,2}[\y_{k-1},\y_{k}] + \energy_{,1}[\y_{k},\y_{k+1}] =0
\end{align}
for $k=1,\ldots, K-1$.
It is shown in \cite[Theorems~4.1 \& 4.2]{RuWi12b} that under the above assumptions 
continuous geodesics exists and are locally unique (for $\dist(\y_A,\y_B)$ sufficiently small).
Furthermore, \cite[Theorems~4.3 \& 4.7]{RuWi12b} ensure the existence of discrete geodesics and their local uniqueness.  
Finally, using a suitable extension via piecewise geodesic paths it is shown in \cite[Theorem~4.8]{RuWi12b} that the discrete 
path energy $\Gamma$--converges to the continuous path energy for $K\to \infty$ 
and that due to equi-coerciveness discrete (extended) minimizers converge to a continuous minimizer. 
If $(\y_0, \y_1, \ldots, y_K)$ is a discrete $K$-geodesic connecting  $\y_0 = \y_A$ and $\y_K = \y_B$, the displacement $K (\y_1-\y_0) \in \Vspace$ is considered as the discrete logarithm of $\y_B$ at $\y_A$.
Vice versa, if $\y_0$ and $\y_1 =\y_0+v$ are given for $v\in\Vspace$ one can iteratively solve  \eqref{eq:EL} for $\y_{k+1}$ given $\y_k$ and $\y_{k-1}$ 
and compute a discrete exponential map which is consistent with the discrete logarithm by construction.
For the analysis of convergence to the corresponding continuous counterparts we refer the reader to \cite[Theorems~5.1 \& 5.10]{RuWi12b}.

\paragraph{Covariant derivative.} 
The differentiation of vector fields on manifolds leads to the notion of the covariant derivative. 
Before we proceed, let us fix the notation and introduce some abbreviations. 
In the following, let $\y \in \manifold$ be an arbitrary but fixed point and $v\in \Vspace$ an arbitrary but fixed tangent vector. 
For some $\epsilon > 0$, let $\ypath = (\ypath(t))_{t\in (-\epsilon,\epsilon)}$ be a path with $\ypath(0) = \y$ and $\dot \ypath(0) = v$. 
Furthermore, we consider a vector field $w$ along the path, \ie $t \mapsto w(\ypath(t))$ for all $|t| < \epsilon$, 
and the corresponding covariant derivative $t \mapsto \cov w(\ypath(t))$, which is again a vector field along the path for all $|t| < \epsilon$. 
The covariant derivative $\cov w$ of $w$ along the path $\ypath$  is uniquely defined -- due to the coercivity of the metric -- by 
 \begin{align}\label{eq:defCovDeriv}
 g_{\ypath(t)}\left(\cov (w \circ \ypath)(t) , z\right) = g_{\ypath(t)}\left(\tfrac{\mathrm{d}}{\mathrm{d} t}(w \circ \ypath)(t), z\right)+ g_{\ypath(t)}(\Gamma_{\ypath(t)}(w\circ \ypath(t),\dot \ypath(t)),z) 
 \end{align}
for all $z\in \Vspace$ and all $t\in (-\epsilon,\epsilon)$, where the dot expresses differentiation with respect to $t$.
Here, the Christoffel operator  $\Gamma: \manifold \times \Vspace\times \Vspace \to \Vspace;\; (\y,v,w) \mapsto \Gamma_\y(v,w)$ is defined by
\begin{align}
2 g_\y(\Gamma_{\y}(v,w),z) = \left(D_\y g_\y\right)(w)(v,z) - \left(D_\y g_\y\right)(z)(v,w) + \left(D_\y g_\y\right)(v)(w,z)
\end{align}
for all tangent vectors $z \in \Vspace$. 
Note that $\cov (w\circ \ypath)(0)$ does depend on the point $\y=\ypath(0)$ and the direction $v \in \Vspace$ but not on the specific choice of the path,
for which reason we will also use the short notation\smallskip

\centerline{
$
\cov w(\y)
\quad\text{instead of}\quad
\cov(w\circ\ypath)(0)
$
}
\smallskip

\noindent whenever the tangent vector $v$ is clear from the context.
A vector field $w$ is parallel along a curve $\ypath$ if $ \cov  (w \circ \ypath)(t)=0$ for all $t$. 
For given $w(\ypath(0))$ a parallel vector field along $\ypath$ can be generated solving the differential equation $\frac\d{\d t}(w\circ \ypath)(t) = - \Gamma_{\ypath(t)}(w\circ \ypath (t),\dot \ypath(t))$ with initial data $w(\ypath(0))$. 
The associated parallel transport map $\ptransport_{\ypath(\tau \leftarrow 0)} w(\ypath(0)) = w(\ypath(\tau))$, which maps initial data $w(\ypath(0)) \in \Vspace$ to output data $w(\ypath(\tau))\in \Vspace$ for $\tau\in \R$ and 
$w$ parallel along $\ypath$, is a linear isomorphism from $\Vspace$ to $\Vspace$.
Here, we use the notation $\ypath(\tau \leftarrow 0)$ for the curve segment $s \mapsto \ypath(s\tau)$ for $s \in [0,1]$.
The covariant derivative at $\y = \ypath(0)$ can be written as the limit of difference quotients, \ie
\begin{align}
\cov (w\circ \ypath)(0) = \lim_{\tau \to 0} \frac{ \ptransport_{\ypath(\tau  \leftarrow 0)}^{-1} \tau w(\ypath(\tau)) - \tau w(\ypath(0))}{\tau^2}\,,
\end{align}
where $\ptransport_{\ypath(\tau  \leftarrow 0)}^{-1} = \ptransport_{\ypath(0 \leftarrow \tau)}$
(the above notation is chosen for consistency with the discrete approximation to be introduced further below).

A discrete parallel transport can be defined via an iterative construction of (discrete) geodesic parallelograms called Schild's ladder. 
However, for the definition of discrete curvature it suffices to introduce only a single step of this scheme. 
For $\y \in \mathring{\manifold}$ and sufficiently small  vectors $w,\, v\in \Vspace$ such a single step corresponds to the discrete transport of $w$ along the line segment from $\y$ to $\y+v$.

The single step of Schild's ladder is achieved by constructing a discrete geodesic parallelogram $\y,\y+v,z,\y+w$
(where $(\y,\y+v)$, $(\y,\y+w)$ and $(\y+v,z)$ are viewed as discrete $1$-geodesics)
and taking $z-(\y+v)\in\Vspace$ as the result of the discrete transport.
In the following we explain this procedure in detail. 
First, we compute a discrete $2$-geodesic $(\y+w, \yc, \y+v)$, where existence and uniqueness of $\yc$ follows from  \cite[Theorems~4.3 \& 4.7]{RuWi12b} for $\|v\|_\Vspace$ and $\|w\|_\Vspace$ sufficiently small.
Then, we determine $z \in \manifold$ such that $(\y,\yc, z)$ is a discrete $2$-geodesic.
This time, existence and uniqueness is ensured by \cite[Lemma 5.6]{RuWi12b} for $\|v\|_\Vspace,\,\|w\|_\Vspace$ sufficiently small. 
Note that the constructed $2$-geodesics form the diagonals of the parallelogram with $c$ being the center point. 
Finally, the discrete transport of $w$ along the line segment from $y$ to $y+v$ is given by
\begin{align}\label{eq:Ptransport}
\Ptransport_{\y+v,\y} w \coloneqq z - (\y+v)\,.
\end{align}
The system of Euler--Lagrange equations associated with \eqref{eq:Ptransport} reads
\begin{align}
\label{eq:ELeqTransp}
\begin{split}
\energy_{,2}[\y+w,\yc] + \energy_{,1}[\yc,\y+v] &=0,\\
\energy_{,2}[\y,\yc] + \energy_{,1}[\yc,z] &=0
\end{split}
\end{align}
for given $\y,\,v,\, w$ and unknown $\yc$ and $z$. 
Iterating this scheme along a polygonal curve on $\mathring{\manifold}$ one can define a consistent discrete transport of a vector along that polygonal curve.
Let us remark that we actually need to introduce a scaling of the vector field $w$ and $v$ by $\tau$ before being transported and the corresponding rescaling afterwards to ensure consistency (\cf \cite[Theorem 5.11]{RuWi12b}). 
The transport map \eqref{eq:Ptransport} is invertible for small~$w$ and~$v$.
Note that in general $\Ptransport_{\y+v,\y}^{-1}$ does not coincide with $\Ptransport_{\y,\y +v}$ 
because $\energy$ is not assumed to be symmetric.
With this discrete parallel transport at hand we can finally define a one-sided covariant difference quotient of a general vector field $w$ along the (polygonal) path $\ypath(t)=\y+tv$ by
\begin{align} \label{eq:CovPlus}
\Covt{\tau} w(\y)\coloneqq\frac{ \Ptransport_{\y+\tau v,\y}^{-1} \tau w(\y+\tau v) - \tau w(\y)}{\tau^2}
\end{align}
(again we suppress the vector $v\in\Vspace$ in the notation since it will always be clear from the context).
Depending on the sign of $\tau$ this is a forward ($\tau >0$) or a backward ($\tau <0$) difference quotient.
Computing \eqref{eq:CovPlus} is based on solving
the following system of Euler--Lagrange equations associated with the \emph{inverse} discrete parallel transport $\Ptransport_{\y+\tau v,\y}^{-1}(\tau w) = z -y$ for given $\y,\,v,\, w$ and unknowns $\yc,z$:
\begin{align}\label{eq:ELeqInverseTransp}
\begin{split}
 \energy_{,2}[z,\yc] + \energy_{,1}[\yc,\y+\tau v] &=0,\\
\energy_{,2}[\y,\yc] + \energy_{,1}[\yc,\y+\tau (v+w)] &=0.
\end{split}
\end{align} 

\section{Consistency of  discrete covariant derivatives}\label{sec:cov}
Instead of the one-sided covariant difference quotient defined in \eqref{eq:CovPlus} one can also consider  a central  covariant difference quotient of a vector field $w$ along the (polygonal) path $\ypath(t)=\y+tv$ by
\begin{align}\label{eq:CovCentral}
\Covt{\pm\tau} w(\y) = \frac{ \Ptransport_{\y+\tau v,\y}^{-1} \tau w(\y+\tau v) + \Ptransport_{\y-\tau v,\y}^{-1} \left(- \tau w(\y-\tau v) \right)}{2\tau^2}\, .
\end{align}
Above, $\Ptransport_{\y-\tau v,\y}^{-1} \left(- \tau w(\y-\tau v)\right)$ is the appropriately reflected construction of $\Ptransport_{\y+\tau v,\y}^{-1} \tau w(\y+\tau v)$.
Note that it does in general not equal $- \Ptransport_{\y-\tau v,\y}^{-1} \left( \tau w(\y-\tau v)\right)$.
Theorem \ref{lemma:consistency} below states  the consistency of both the one-sided
covariant difference quotient \eqref{eq:CovPlus} and the central  covariant difference quotient \eqref{eq:CovCentral}. The associated estimates will then be used 
to prove the consistency of different approximations of the continuous Riemann curvature tensor.
In fact, a proof for the first order consistency of the  one-sided covariant difference quotient
can already be found in~\cite[Theorem~5.13]{RuWi12b} as a corollary of the convergence of the discrete parallel transport.
Here, we use a different approach based on the implicit function theorem, which is substantially simpler and also allows to prove the second order consistency of 
the central covariant difference quotient with little extra effort.

\begin{theorem}[Consistency of the covariant difference quotients]  \label{lemma:consistency}
Let \eqref{eq:consW} and the assumptions from the beginning of Section\,\ref{sec:review} hold true, and let $m\in\N$, $\y \in \mathring\manifold$, 
$w\in C^{m-1}(\manifold;\Vspace)$, and $\energy \in C^m(\manifold\times\manifold;\R)$
(which by \eqref{eq:consW} automatically implies $\metric \in C^{m-2}(\manifold\times\Vspace\times\Vspace;\R)$).
We consider the covariant derivative of $w$ at $\y$ in a fixed given direction $v\in \Vspace$.
If $m\geq4$ we have
\begin{align}
\left\|\Covt{\tau}  w (\y) - \cov w (\y) \right\|_\Vspace \leq C \tau
\end{align}
for  $\tau$ sufficiently small.
Furthermore, if $m\geq5$, then we have
\begin{align}
\left\| \Covt{\pm \tau}  w (\y) - \cov w (\y) \right\|_\Vspace \leq C \tau^2
\end{align}
for $\y \in \mathring\manifold$ and $\tau$ sufficiently small. In both estimates the constant $C>0$ is independent of $\tau$ and within sufficiently small balls $B\subset\mathring\manifold$ also independent of $\y\in B$.
\end{theorem}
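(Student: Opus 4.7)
The plan is to recast the Euler--Lagrange system \eqref{eq:ELeqInverseTransp} as an implicit equation for $(\yc,z)$ parameterized by $\tau$, apply the implicit function theorem to obtain smooth Taylor expansions in $\tau$, and then match the leading Taylor coefficient with the Christoffel operator via the symmetry identities \eqref{eq:secondDerivIdentities} and \eqref{eq:thirdDerivSymm}. This bypasses the detour through convergence of the discrete parallel transport employed in \cite[Theorem~5.13]{RuWi12b}.

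First I would define $F(\tau,\yc,z)\in\Vspace^\ast\times\Vspace^\ast$ as the pair of left-hand sides of \eqref{eq:ELeqInverseTransp}. Then $F(0,\y,\y)=0$ because $\energy_{,1}[\y,\y]=\energy_{,2}[\y,\y]=0$. Using \eqref{eq:secondDerivIdentities}, the derivative $\partial_{(\yc,z)}F|_{(0,\y,\y)}$ computes to a $2\times 2$ block operator whose entries are $\pm 2g_\y$ or $0$ (the entry $\partial_z F_2$ vanishes since the second equation of \eqref{eq:ELeqInverseTransp} does not involve $z$). A direct check shows this operator factors as a nonsingular scalar $2\times 2$ matrix tensored with $2g_\y$, so $\Vspace$-coercivity of $g_\y$ yields a bounded inverse. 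The implicit function theorem therefore produces $\yc,z\in C^{m-1}$ as functions of $\tau$ on a neighborhood of $0$, with constants uniform in $\y$ over sufficiently small balls.

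Next I would Taylor expand $\yc(\tau)=\y+\tau\yc_1+\tau^2\yc_2+\ldots$, $z(\tau)=\y+\tau z_1+\tau^2 z_2+\ldots$, and $w(\y+\tau v)=w(\y)+\tau Dw(\y)(v)+\tfrac{\tau^2}{2}D^2 w(\y)(v,v)+\ldots$, and expand both components of \eqref{eq:ELeqInverseTransp} about the diagonal $(\y,\y)$ using the identities \eqref{eq:secondDerivIdentities}--\eqref{eq:thirdDerivSymm}. Matching powers of $\tau$ yields a triangular algebraic system: the linear order fixes $\yc_1,z_1$ (in particular $z_1=w(\y)$, reflecting triviality of the transport at $\tau=0$), while the quadratic order fixes $z_2$. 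Since by definition $\Covt{\tau}w(\y)=(z(\tau)-\y-\tau w(\y))/\tau^2=z_2+\tau z_3+\bigO(\tau^2)$, first-order consistency reduces to the identity $z_2=Dw(\y)(v)+\Gamma_\y(w(\y),v)=\cov w(\y)$.

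This algebraic identification is the main obstacle: the quadratic-order contributions to $z_2$ are linear combinations of third derivatives $\energy_{,ijk}[\y,\y]$ and must be converted into first derivatives of $g$. The conversion comes from differentiating the identity $\energy_{,22}[\y,\y](\cdot,\cdot)=2g_\y(\cdot,\cdot)$ along $\manifold$ and then using \eqref{eq:thirdDerivSymm} together with \eqref{eq:secondDerivIdentities} to reshuffle the resulting $\energy_{,ijk}$-terms into exactly the three $D_\y g_\y$-contributions defining $\Gamma_\y(w(\y),v)$. Once this is done, the first-order bound under $m\geq 4$ follows. For the central quotient I would observe that $\Covt{\pm\tau}w(\y)=\tfrac12(\Covt{\tau}w(\y)+\Covt{-\tau}w(\y))$: the reflected construction in \eqref{eq:CovCentral} is precisely what is obtained by substituting $-\tau$ for $\tau$ in \eqref{eq:CovPlus}, so the backward one-sided quotient admits the expansion $\Covt{-\tau}w(\y)=z_2-\tau z_3+\bigO(\tau^2)$, and averaging cancels the odd term $\tau z_3$. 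The remaining $\bigO(\tau^2)$ contribution comes from $z_4$, which requires one additional order in the Taylor expansion and hence $m\geq 5$.
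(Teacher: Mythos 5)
Your proposal is correct and follows essentially the same route as the paper's proof: an implicit function theorem argument applied to the Euler--Lagrange system \eqref{eq:ELeqInverseTransp} (with the same block-triangular, $g$-coercive linearization at $\tau=0$), a Taylor expansion of the solution branch in $\tau$ whose quadratic coefficient is identified with $\cov w(\y)$ by converting third derivatives of $\energy$ on the diagonal into $D_\y g$-terms via \eqref{eq:secondDerivIdentities}--\eqref{eq:thirdDerivSymm}, and the observation that $\Covt{\pm\tau}w(\y)=\tfrac12\bigl(\Covt{\tau}w(\y)+\Covt{-\tau}w(\y)\bigr)$ so that the odd term cancels, with the extra order of smoothness $m\geq5$ needed exactly where you place it. The one step you flag as the ``main obstacle'' (showing the quadratic coefficient equals $Dw(\y)(v)+\Gamma_\y(w(\y),v)$) is carried out in the paper precisely by the reshuffling you describe, so your plan contains no gap.
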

\begin{proof}
Without loss of generality we may assume $\y=0$, and
we consider $\tau \mapsto \w(\tau)= w\circ\ypath(\tau)$ to be the vector field along $\ypath(\tau) = 0 + \tau v$.
By \eqref{eq:CovPlus} and the definition of $\Ptransport_{\tau v,0}^{-1} \tau w(\tau v)$ we have $\left( \Covt{\tau}  w \right)(0) = \tau^{-2} ( \yz(\tau)-\tau \w(0))$,
where $\yz(\tau)$ is the fourth corner of the discrete geodesic parallelogram $0,\tau v,\tau v+\tau \w(\tau),\yz(\tau)$ with center $\yc(\tau)$
and where $\yz$ and $\yc$ satisfy \eqref{eq:ELeqInverseTransp}.

Introducing $F: \R \times \Vspace \times \Vspace \to \Vspace'\times\Vspace'$ with
\begin{align}\label{eq:F}
F(\tau, \yz, \yc)(r) = 
   \begin{pmatrix} 
      \energy_{,2}\myArg{\yz}{\yc}(r) + \energy_{,1}\myArg{\yc}{\tau v}(r) \\
       \energy_{,2}\myArg{0}{\yc}(r) + \energy_{,1}[\yc,\tau v+ \tau \w(\tau)](r)
   \end{pmatrix}
\end{align}
for $r\in \Vspace$, the condition \eqref{eq:ELeqInverseTransp} on $\yz(\tau)$ and $\yc(\tau)$ can be rewritten as $F(\tau, \yz(\tau), \yc(\tau))(r) = 0$ for all $r\in\Vspace$.
In what follows, we skip the tangent vector $r$ to which $F$ is applied, remembering that in all derivatives of $\energy$ the first differentiation is in direction $r$. 
Now the implicit function theorem tells us that there exists a neighbourhood $U$ of the origin in $\R\times \Vspace \times \Vspace$ such that the set
$\{(\tau, \yz, \yc)\in U\,|\,F(\tau, \yz, \yc) =0\}$ is the graph of a map
$\tau \mapsto (\yz, \yc)(\tau)$, \ie
\[
F(\tau, \yz(\tau), \yc(\tau)) = 0\,.
\]
Furthermore, we can differentiate with respect to $\tau$ and obtain
\begin{align}\label{eq:dTauFequalsZero}
0= \frac{\mathrm{d}}{\mathrm{d} \tau} F(\tau, \yz(\tau), \yc(\tau)) =  \partial_\tau F(\tau, \yz(\tau), \yc(\tau))  +  \partial_{(\yz,\yc)} F(\tau, \yz(\tau), \yc(\tau)) 
 \begin{pmatrix} \dot \yz(\tau)\\ \dot \yc(\tau) \end{pmatrix}\,,
\end{align}
where the dot expresses differentiation with respect to $\tau$. 
Inserting \eqref{eq:F} into \eqref{eq:dTauFequalsZero} we obtain the block operator equation 
\begin{align} \label{eq:IFTderivative}
\begin{pmatrix} 
\energy_{,21}\myArg{\yz}{\yc} & \energy_{,22}\myArg{\yz}{\yc} + \energy_{,11}\myArg{\yc}{\tau v} \\
0 & \energy_{,22}\myArg{0}{\yc} + \energy_{,11}\myArg{\yc}{\tau v+ \tau \w} 
\end{pmatrix}
\begin{pmatrix} \dot \yz\\ \dot \yc \end{pmatrix}
= - 
\begin{pmatrix} 
\energy_{,12}\myArg{\yc}{\tau v}(v)  \\
\energy_{,12}\myArg{\yc}{\tau v+ \tau \w} (v+ \w + \tau \dot \w)
\end{pmatrix}
\,,
\end{align}
where we abbreviated $\yc = \yc(\tau)$, $\yz = \yz(\tau)$ and $\w = \w(\tau)$. 
Obviously, we have $(\yz(0), \yc(0))=(0,0)$. 
Using $2g = \energy_{,11}\myArg{0}{0} = \energy_{,22}\myArg{0}{0} = - \energy_{,12}\myArg{0}{0}$  for $g=g_0$ we can evaluate this operator equation as
\begin{align} \label{eq:IFTderivativeZero}
\begin{pmatrix} 
-2g & 4g \\
0 & 4g
\end{pmatrix}
\begin{pmatrix} \dot \yz(0)\\ \dot \yc(0) \end{pmatrix}
= 
\begin{pmatrix} 
2g \,v \\
2g \,v + 2g \,\w(0) 
\end{pmatrix}
\,,
\end{align}
which leads to $\dot \yc(0)=\tfrac12 (v+\w(0))$ and 
$\dot \yz(0)= \w(0)$.
Next, we differentiate \eqref{eq:IFTderivative} once more in $\tau$ and obtain
\begin{align}  \label{eq:IFTsecondDerivative}
&\begin{pmatrix} 
\energy_{,21}\myArg{\yz}{\yc} & \energy_{,22}\myArg{\yz}{\yc} + \energy_{,11}\myArg{\yc}{\tau v} \\
0 & \energy_{,22}\myArg{0}{\yc} + \energy_{,11}\myArg{\yc}{\tau v+ \tau \w} 
\end{pmatrix}
\begin{pmatrix} \ddot \yz\\ \ddot \yc \end{pmatrix}
= 
\begin{pmatrix} 
R_1(\tau) \\
R_2(\tau) 
\end{pmatrix}\, ,
\end{align}
where  $R_1$ and $R_2$ are functionals mapping
$\tau$
into $\Vspace'$ and act on $r \in \Vspace$ with
\begin{align*}
R_1(\tau)(r) =& - \energy_{,211}\myArg{z}{c}(r,\dot \yz,\dot \yz) -  \energy_{,212}\myArg{z}{c}(r,\dot \yz,\dot \yc)- \energy_{,221}\myArg{z}{c}(r,\dot \yc,\dot \yz) - \energy_{,222}\myArg{z}{c}(r,\dot \yc,\dot \yc)\\
& - \energy_{,111}\myArg{c}{\tau v}(r,\dot \yc,\dot \yc) - \energy_{,112}\myArg{c}{\tau v}(r,\dot \yc,v) - \energy_{,121}\myArg{c}{\tau v}(r,v,\dot \yc) - \energy_{,122}\myArg{c}{\tau v}(r,v,v)\\
=& - \energy_{,211}\myArg{z}{c}(r,\dot \yz,\dot \yz) -  2\energy_{,212}\myArg{z}{c}(r,\dot \yz,\dot \yc) - \energy_{,222}\myArg{z}{c}(r,\dot \yc,\dot \yc)\\
& - \energy_{,111}\myArg{c}{\tau v}(r,\dot \yc,\dot \yc) - 2\energy_{,112}\myArg{c}{\tau v}(r,\dot \yc,v) - \energy_{,122}\myArg{c}{\tau v}(r,v,v)\,, \\
R_2(\tau)(r) =& - \energy_{,222}\myArg{0}{c}(r,\dot \yc,\dot \yc)  - \energy_{,111}\myArg{c}{\tau (v+\w)}(r,\dot \yc,\dot \yc) - \energy_{,112}\myArg{c}{\tau (v+\w)}(r,\dot \yc,v+\w+\tau \dot \w)\\
& - \energy_{,121}\myArg{c}{\tau (v+\w)}(r,v+\w+\tau \dot \w,\dot \yc) - \energy_{,122}\myArg{c}{\tau (v+\w)}(r,v+\w+\tau \dot \w,v+\w+\tau \dot \w)\\
& - \energy_{,12}\myArg{c}{\tau (v+\w)} (r, 2 \dot \w + \tau \ddot \w) \\
=& - \energy_{,222}\myArg{0}{c}(r,\dot \yc,\dot \yc)  - \energy_{,111}\myArg{c}{\tau (v+\w)}(r,\dot \yc,\dot \yc) - 2\energy_{,112}\myArg{c}{\tau (v+\w)}(r,\dot \yc,v+\w+\tau \dot \w)\\
&  - \energy_{,122}\myArg{c}{\tau (v+\w)}(r,v+\w+\tau \dot \w,v+\w+\tau \dot \w)- \energy_{,12}\myArg{c}{\tau (v+\w)} (r, 2 \dot \w + \tau \ddot \w) \, .
\end{align*}
Evaluating \eqref{eq:IFTsecondDerivative} at $\tau=0$ gives
\begin{align}  \label{eq:2nd}
&\begin{pmatrix} 
-2g &4g \\
0 & 4g
\end{pmatrix}
\begin{pmatrix} \ddot \yz(0)\\ \ddot \yc(0) \end{pmatrix}
=
\begin{pmatrix} 
R_1(0) \\
R_2(0) 
\end{pmatrix}
\end{align}
with
\begin{align*}
R_1(0)(r) =& -\energy_{,211}\myArg{0}{0}(r,\w(0),\w(0)) - \energy_{,212}\myArg{0}{0}(r,\w(0),v+\w(0)) \\
& -\tfrac14 \energy_{,222}\myArg{0}{0}(r,v+\w(0),v+\w(0)) -\tfrac14  \energy_{,111}\myArg{0}{0}(r,v+\w(0),v+\w(0))\\
& -\energy_{,112}\myArg{0}{0}(r,v+\w(0),v) -\energy_{,122}\myArg{0}{0}(r,v,v)\,, \\
R_2(0)(r) =& -\tfrac14 \energy_{,222}\myArg{0}{0}(r,v+\w(0),v+\w(0))  -\tfrac14 \energy_{,111}\myArg{0}{0}(r,v+\w(0),v+\w(0)) \\
& -\energy_{,112}\myArg{0}{0}(r,v+\w(0),v+\w(0)) -\energy_{,122}\myArg{0}{0}(r,v+\w(0),v+\w(0))  - \energy_{,12}\myArg{0}{0} (r, 2\dot \w(0)) \,,
\end{align*}
where we used  $\dot \yc(0)=\tfrac12 (v+\w(0))$ and $\dot \yz(0) = \w(0)$ from above. 
Solving \eqref{eq:2nd} yields  $g(\ddot c(0),r) =  \tfrac{R_2(0)(r)}4$ and
\begin{align}
g(\ddot z(0),r)  = &\frac12 R_2(0)(r) - \frac12 R_1(0)(r) \notag\\ 
=& \tfrac12\big(
[-\energy_{,121}-\energy_{,122}]\myArg{0}{0}(r,v+\w(0),v+\w(0))
+\energy_{,212}\myArg{0}{0}(r,\w(0),v+\w(0))\notag\\
&+\energy_{,112}\myArg{0}{0}(r,v+\w(0),v)
+\energy_{,122}\myArg{0}{0}(r,v,v)
+\energy_{,211}\myArg{0}{0}(r,\w(0),\w(0))
\big)\notag\\
&- \energy_{,12}\myArg{0}{0} (r, \dot \w(0)) \notag\\
=&\tfrac12\big(
[-\energy_{,121}-\energy_{,122}+\energy_{,212}+\energy_{,211}]\myArg{0}{0}(r,\w(0),\w(0))\notag\\
&+[-\energy_{,121}-\energy_{,122}+\energy_{,112}+\energy_{,122}]\myArg{0}{0}(r,v,v)\notag\\
&+[-\energy_{,121}-\energy_{,112}-2\energy_{,122}+\energy_{,212}+\energy_{,112}]\myArg{0}{0}(r,\w(0),v)
\big)- \energy_{,12}\myArg{0}{0} (r, \dot \w(0)) \notag\\
=&-\tfrac12[\energy_{,211}+\energy_{,122}]\myArg{0}{0}(r,v,\w(0)) - \energy_{,12}\myArg{0}{0} (r, \dot \w(0))\,.\label{eq:metricZConsistency}
\end{align}
Note that \eqref{eq:thirdDerivSymm} implies $(-\energy_{,121}-\energy_{,122}+\energy_{,212}+\energy_{,211})\myArg{0}{0} = 0$
and $(-\energy_{,121}-\energy_{,122}+\energy_{,212})\myArg{0}{0} = -\energy_{,211}\myArg{0}{0}$ which was used in the last step.

From $g(v,w) = \tfrac12 \energy_{,11}\myArg{0}{0}(v,w)$ we deduce
\[D_\y g(r)(v,w) = \tfrac12 \left(\energy_{,111}\myArg{0}{0}(v,w,r) + \energy_{,112}\myArg{0}{0}(v,w,r) \right)\,.\]
Hence, using the definition of the covariant derivative in \eqref{eq:defCovDeriv}, we obtain at $\ypath(0) =\y= 0$
\begin{align*}
&g\left(\cov w(\y),r\right) =
g(\dot \w(0),r) + g(\Gamma_0(\w(0),v),r)\\
=& - \frac12 \energy_{,12}\myArg{0}{0} (r, \dot \w(0)) + \frac12 \left(D_\y g(\w(0))(r,v) +D_\y g(v)(r,\w(0))-D_\y g(r)(v,\w(0))\right)\\
=& \frac14 \big( \energy_{,111}\myArg{0}{0}(r,v,\w(0)) + \energy_{,112}\myArg{0}{0}(r,v,\w(0)) + \energy_{,111}\myArg{0}{0}(r,\w(0),v) \\
&+ \energy_{,112}\myArg{0}{0}(r,\w(0),v) - \energy_{,111}\myArg{0}{0}(v,\w(0),r) - \energy_{,112}\myArg{0}{0}(v,\w(0),r) \big) - \frac12 \energy_{,12}\myArg{0}{0} (r, \dot \w(0))\\
=& \frac14 \big( \energy_{,111} + \energy_{,112}  + \energy_{,121}  - \energy_{,211}\big)\myArg{0}{0}(r,v,\w(0)) - \frac12 \energy_{,12}\myArg{0}{0} (r, \dot \w(0)) \\
=& -\frac14 \big( \energy_{,122}  + \energy_{,211}\big)\myArg{0}{0}(r,v,\w(0)) - \frac12 \energy_{,12}\myArg{0}{0} (r, \dot \w(0))\,,
\end{align*}
where \eqref{eq:thirdDerivSymm} implied $(\energy_{,111} +   \energy_{,112})\myArg{0}{0}  = -(\energy_{,121} + \energy_{,122})\myArg{0}{0}$ in the last step.
Then, taking into account~\eqref{eq:metricZConsistency} we obtain
\begin{align*}
 g\left( \frac12\ddot z(0),r\right) = g\left(\cov w(0),r\right)\, .
\end{align*}
Next, we consider the Taylor expansion of $\tau \mapsto \yz(\tau)$ at $\tau =0$, \ie
\begin{align*}
\yz(\tau) =& \yz(0) + \dot z(0) \tau + \frac12 \ddot \yz(0) \tau^2 + \frac16 \dddot \yz(0) \tau^3 + R(\tau)\\
=& 0 + w(0) \tau + \cov w(0) \,\tau^2 + \frac16 \dddot \yz(0) \tau^3 + R(\tau)
\end{align*}
with the remainder term $R(\tau)=o(\tau^3)$ if $\energy \in C^4(\manifold\times\manifold;\R)$ and $R(\tau)=O(\tau^4)$ if $\energy \in C^5(\manifold\times\manifold;\R)$.
This implies
\begin{align*}
 \cov w(0) =& \frac1{\tau^2} \left( \yz(\tau)-\tau w(0)\right) + O(\tau)
=\Covt{\tau}  w(0) + O(\tau)\,,
\end{align*}
which establishes first order consistency of the one-sided covariant difference quotient.
Similarly, for the central covariant difference quotient we obtain 
\begin{align*}
\Covt{\pm \tau}  w(0)
&=\frac12\left( \frac1{\tau^2} \left( \yz(\tau)-\tau w(0)\right) + \frac1{\tau^2} \left( \yz(-\tau)+\tau w(0)\right)\right)
= \frac{\yz(\tau) + \yz(-\tau)}{2\tau^2} \\
&= \frac1{2\tau^2}\Big(w(0) \tau + \cov w(0)\, \tau^2 +\frac{1}{6}\dddot \yz(0) \tau^3 +  O(\tau^4)
- w(0) \tau +\cov w(0)\, \tau^2 -\frac{1}{6}\dddot \yz(0) \tau^3 +  O(\tau^4)\Big) \\
&= \cov w (0) + O(\tau^2)\,.
\end{align*}
The differentiability requirements in the statement of the theorem directly follow by inspection of \eqref{eq:IFTsecondDerivative}.
\end{proof}

\section{Discrete Approximation of the Riemann Curvature Tensor}\label{sec:curvatureTensor}
For $\y\in \manifold$ and a pair of tangent vectors $v,\,w \in \Vspace$ the Riemannian curvature tensor is a linear mapping $\riemann_\y(v,w) :\Vspace \to \Vspace$
with
\begin{equation}\label{eqn:RiemannCurvature}
\left(\riemann_\y(v,w)z\right)= \left(\covt \covs  z - \covs \covt z \right)(\y)\,,
\end{equation}
where $z$ is an extension of the tangent vector $z$ by a smooth vector field (in our context, for instance, by a vector field constant on $\manifold$) and $\ypath(t,s)$ a smooth mapping $\ypath: [-\epsilon, \epsilon]^2 \to \manifold$ with $\ypath(0,0)=\y$, $\partial_t \ypath(0,0) = v$ and 
$\partial_s \ypath(0,0) = w$ for some $\epsilon >0$. Examples of such mappings are $\ypath(t,s) =\exp_\y(sw+tv)$ on general manifolds.
In the case $\manifold \subset \Vspace$ we simply choose $\ypath(t,s)=\y+sw+tv$.
To define a discrete counterpart we consider nested covariant difference quotients with a smaller  step size in the inner difference quotient.
In detail, using the first order one-sided covariant difference quotient we define
\begin{equation}\label{eq:RTensor}
\left(\Riemann^\tau_\y(v,w)z\right)= \left(\Covt{\tau} \Covs{\tau^\beta}  z - \Covs{\tau} \Covt{\tau^\beta} z \right)(\y)
\end{equation}
with $\beta\geq2$.
Using the central  covariant difference quotient
we obtain the corresponding discrete Riemannian curvature tensor 
\begin{align}
\left(\Riemann^{\pm\tau}_\y(v,w)z\right)= \left(\Covt{\pm\tau} \Covs{\pm\tau^\beta}  z - \Covs{\pm\tau} \Covt{\pm\tau^\beta} z \right)(\y)
\end{align}
with $\beta \geq \frac32$.
The consistency results for the different discrete Riemannian curvature tensors are given in the following theorem.

\begin{theorem}[Consistency of the discrete Riemannian curvature tensor] \label{thm:consRiemCurvTensor}
Let \eqref{eq:consW} and the assumptions from the beginning of Section\,\ref{sec:review} hold, and let $m\in\N$ as well as $\energy \in C^{m}(\manifold\times\manifold;\R)$ and $\metric \in C^{m}(\manifold\times\Vspace\times\Vspace;\R)$.
For $\y \in \mathring\manifold$, $v,w,z \in \Vspace$ and $\tau$ sufficiently small we have
\begin{align}
\left(\Riemann^{\tau}_\y(v,w)z\right) = \left(\riemann_\y(v,w)z\right) + O(\tau)
\end{align}
with respect to the $\Vspace$ norm for $m\geq4$ and $\beta\geq2$, and we have
\begin{align}
\left(\Riemann^{\pm\tau}_\y(v,w)z\right) = \left(\riemann_\y(v,w)z\right) + O(\tau^2)
\end{align}
with respect to the $\Vspace$ norm for $m\geq5$ and $\beta \geq \frac32$.
\end{theorem}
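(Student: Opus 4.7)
The plan is to apply Theorem~\ref{lemma:consistency} twice — once to each of the two nested covariant difference quotients — and to carefully track how the inner error propagates through the outer difference quotient. Setting $\hat{w}^\tau(\tilde\y) := \Covs{\tau^\beta}z(\tilde\y)$ and $\hat{u}^\tau(\tilde\y) := \Covt{\tau^\beta}z(\tilde\y)$ we can rewrite $\Riemann^\tau_\y(v,w)z = \Covt{\tau}\hat{w}^\tau(\y) - \Covs{\tau}\hat{u}^\tau(\y)$, and similarly in the central case. It then suffices to show that $\Covt{\tau}\hat{w}^\tau(\y)$ approximates $\covt\covs z(\y)$ to the claimed order (and symmetrically for the second term), since subtracting reproduces \eqref{eqn:RiemannCurvature} up to the stated error.

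By Theorem~\ref{lemma:consistency}, applied pointwise at each base point in a small ball around $\y$ with the uniform constant asserted in that theorem, we have $\hat{w}^\tau(\tilde\y) = \covs z(\tilde\y) + R^\tau(\tilde\y)$ with $\|R^\tau\|_\infty \leq C\tau^\beta$ in the one-sided case and $\|R^\tau\|_\infty \leq C\tau^{2\beta}$ in the central case. The main difficulty is propagating this remainder through the outer difference quotient, whose Schild's-ladder parallel transport $\Ptransport^{-1}_{\y+\tau v,\y}$ is nonlinear in the transported vector, so we cannot split by linearity. The resolution is to observe that the system \eqref{eq:ELeqInverseTransp} depends smoothly on $(\tau,w)$ and reduces to the identity transport at $\tau=0$, so by the implicit function theorem the map $P(\tau,u):=\Ptransport^{-1}_{\y+\tau v,\y}u$ is smooth with $P(0,u)=u$ and $\partial_u P(0,u)=\Id$. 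A first-order Taylor expansion of $P$ in its second argument then gives
\begin{align*}
P\bigl(\tau,\tau\covs z(\y+\tau v)+\tau R^\tau(\y+\tau v)\bigr) = P\bigl(\tau,\tau\covs z(\y+\tau v)\bigr) + \tau R^\tau(\y+\tau v)\bigl(1+O(\tau)\bigr).
\end{align*}
Inserting this into the definition of $\Covt{\tau}$ and dividing by $\tau^2$ yields
\begin{align*}
\Covt{\tau}\hat{w}^\tau(\y) = \Covt{\tau}[\covs z](\y) + \tau^{-1}\bigl(R^\tau(\y+\tau v)-R^\tau(\y)\bigr) + O(\tau^\beta).
\end{align*}
The first term equals $\covt\covs z(\y)+O(\tau)$ by Theorem~\ref{lemma:consistency} applied to the smooth vector field $\covs z$ (whose $C^{m-1}$ regularity is inherited from $\energy,\metric\in C^m$), and the middle term is bounded trivially by $2C\tau^{\beta-1}$. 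Choosing $\beta\geq 2$ matches both at $O(\tau)$. In the central case the outer consistency becomes $O(\tau^2)$ and the propagated inner remainder becomes $O(\tau^{2\beta-1})$, so $\beta\geq \tfrac{3}{2}$ is the matching threshold. Applying the analogous argument to $\Covs{\tau}\hat{u}^\tau(\y)$ and subtracting establishes both claims.

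The principal obstacle is precisely the nonlinearity of the Schild's-ladder transport in its transported argument, which rules out a direct linearity-based splitting of the outer difference quotient; the workaround hinges on the base case $P(0,u)=u$ together with smooth dependence via the implicit function theorem, yielding $\partial_u P(\tau,u)-\Id=O(\tau)$ uniformly on small neighbourhoods. The remaining bookkeeping — matching $\beta$ to balance the outer-consistency rate against the propagated inner error, and checking that smoothness correctly threads through two nested applications of Theorem~\ref{lemma:consistency} (in particular that $\covs z\in C^{m-1}$ is admissible as input for the outer one) — is routine.
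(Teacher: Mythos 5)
Your proposal is correct and follows essentially the same route as the paper: both decompose the error into the outer one-sided (resp.\ central) difference quotient applied to the exact field $\covs z$ --- controlled by Theorem~\ref{lemma:consistency} --- plus the inner consistency error of size $O(\tau^\beta)$ (resp.\ $O(\tau^{2\beta})$) propagated through the outer quotient, and then balance exponents to obtain $\beta\geq 2$ (resp.\ $\beta\geq\tfrac32$). The only difference is technical and minor: where you handle the nonlinearity of the Schild's-ladder transport by Taylor-expanding $P(\tau,\cdot)$ with $\partial_u P(0,\cdot)=\Id$, the paper invokes the uniform stability bound $\|\Ptransport_{\y+\tau v,\y}^{-1}w\|_\Vspace\leq C\|w\|_\Vspace$ (from the stability of midpoints and endpoints of $2$-geodesics) together with differentiability of the transport, so your treatment is, if anything, slightly more explicit on that point.
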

\begin{proof}
We first prove the estimate for the discrete Riemannian curvature tensor based on 
one-sided covariant difference quotients.
It follows from the stability of the midpoint and the endpoint in $2$-geodesics \cite[Lemmas 5.5 \& 5.8]{RuWi12b} that
\begin{align}
\| \Ptransport_{\y+\tau v,\y}^{-1} w \|_\Vspace  \leq C  \| w \|_\Vspace
\end{align}
for some constant $C>0$ and all $\tau$ and $\| w \|_\Vspace$ sufficiently small. In fact, from the local differentiability of discrete geodesics with respect to their end points \cite[Theorem 4.7]{RuWi12b}
and the local differentiability of the discrete exponential map with respect to base point and direction \cite[Lemma 5.6 \& Remark 5.7]{RuWi12b}
the discrete parallel transport is differentiable.
Thus, the difference between the first term of \eqref{eq:RTensor} and of \eqref{eqn:RiemannCurvature} can be estimated as
\begin{align*}
&\left\|\left(\Covt{\tau} \Covs{\tau^\beta}  z - \covt \covs z \right)(\y)  \right\|_{\Vspace}
\leq  \left\|\Covt{\tau} \left( \Covs{\tau^\beta}   z -  \covs  \right)(\y)\right\|_{\Vspace}  + \left\|\left(\Covt{\tau} \covs  z - \covt \covs  z \right)(\y)\right\|_{\Vspace} \\
& \leq  \frac1{\tau^2} \left\| \left( \Ptransport_{\y+\tau v,\y}^{-1} \tau  \left( \Covs{\tau^\beta}  z -  \covs  z \right) (\y+\tau v) \right)
- \tau  \left( \Covs{\tau^\beta}  z -  \covs z \right) (\y) \right\|_{\Vspace}+ O(\tau) \\
&\leq   \frac1{\tau^2} (1+C) \tau \max_{\sigma \in \{0,1\}}\left\| \left( \Covs{\tau^\beta}  z -  \covs z \right) (\y+\tau \sigma v) \right\|_{\Vspace} + O(\tau) 
=  \frac{1+C}{\tau}  O(\tau^\beta) + O(\tau) = O(\max\{\tau,\tau^{\beta-1}\})\,.
\end{align*}
The second term $\left(\Covs{\tau} \Covt{\tau^\beta}  z - \covs \covt z \right)(\y)$ can be treated in analogy and thus the claim follows for the choice $\beta\geq2$.
Let us remark that the above application of Theorem~\ref{lemma:consistency} requires $\energy \in C^4(\manifold\times\manifold,\R)$ and $\covs  z \in C^3(\manifold;\Vspace)$,
which is fulfilled if $g\in C^4(\manifold\times\Vspace\times \Vspace)$.
In the case of the discrete Riemannian curvature tensor based on central covariant difference quotients we proceed along the same lines
and obtain 
\begin{align*}
\left\|\left(\Covt{\pm \tau} \Covs{\pm \tau^\beta}  z - \covt \covs z \right)(\y)  \right\|_{\Vspace}
\leq&  \left\|\Covt{\pm \tau} \left( \Covs{\pm \tau^\beta}   z -  \covs  \right)(\y)\right\|_{\Vspace}  + \left\|\left(\Covt{\pm \tau} \covs  z - \covt \covs  z \right)(\y)\right\|_{\Vspace} \\
\leq&   \frac{2C \tau}{\tau^2}  \max_{\sigma \in \{-1,1\}} \left\| \left(\Covs{\pm \tau^\beta}  z -  \covs z \right) (\y+\tau \sigma v)\right\|_{\Vspace} + O(\tau^2)\\
= &  \frac{2C}{2\tau}  O(\tau^{2\beta}) + O(\tau^2) = O(\max\{\tau^{2\beta-1},\tau^2\})
\end{align*}
which for $\beta\geq\frac32$ implies the second order consistency of the discrete Riemannian curvature tensor.
This time, the application of Theorem~\ref{lemma:consistency} requires $\energy \in C^5(\manifold\times\manifold,\R)$ and $\covs  z \in C^4(\manifold;\Vspace)$,
which is fulfilled if $g\in C^5(\manifold\times\Vspace\times \Vspace)$.
\end{proof}
Finally, for $\y\in\mathring\manifold$ and a pair of tangent vectors $v,\,w \in \Vspace$ the sectional curvature 
\begin{align}
\kappa_\y(v,w) := \frac{ \metric_\y \left( v, \riemann_\y(v,w)w\right) } { \metric_\y(v,v)\metric_\y(w,w) - \metric_\y^2(v,w)}
\end{align}
is the Gaussian curvature of the surface $\{ \exp_\y(sv + tw) : s,t\in (-\epsilon, \epsilon) \}$ at $\y$. 
Having the notion of a consistent discrete Riemann curvature tensor, we define a corresponding discrete sectional curvature as
\begin{align}
\mathbf{\kappa}_\y^{\tau}(v,w) := \frac{ \metric_\y \left( v, \Riemann^\tau_\y(v,w)w\right) } { \metric_\y(v,v)\metric_\y(w,w) - \metric_\y^2(v,w)}
\end{align}
and likewise $\mathbf{\kappa}^{\pm\tau}(v,w)$ using $\Riemann^{\pm\tau}$. 
Under the assumptions of Theorem~\ref{thm:consRiemCurvTensor} we then have
\begin{align*}
 \left| \kappa_\y - \mathbf{\kappa}_\y^{\tau} \right| (v,w) &= \frac{ \left|\metric_\y \left( v, \Big[\riemann_\y(v,w) - \Riemann^\tau_\y(v,w)\Big] w\right) \right|} { \left|\metric_\y(v,v)\metric_\y(w,w) - \metric_\y^2(v,w) \right|} \leq C \left\| v\right\|_\Vspace \, \left\| \Big[\riemann_\y(v,w) - \Riemann^\tau_\y(v,w)\Big] w \right\|_\Vspace = O(\tau)\, , 
\end{align*}
where a second order consistency of $\mathbf{\kappa}_\y^{\pm\tau}$ follows analogously. 
Altogether, we have proven the following:
\begin{corollary}[Consistency of discrete sectional curvature] \label{thm:consSecCurv}
Let $\y\in\mathring\manifold$ and $v,\,w \in \Vspace$. Under the assumptions of Theorem~\ref{thm:consRiemCurvTensor} we have 
$\left| \kappa_\y - \mathbf{\kappa}_\y^{\tau} \right| (v,w) = O(\tau)$ 
and
$|\kappa_\y - \mathbf{\kappa}_\y^{\pm\tau}|(v,w) = O(\tau^2)\, .$
\end{corollary}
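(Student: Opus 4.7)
The plan is to exploit the fact that the continuous and discrete sectional curvatures share exactly the same denominator $\metric_\y(v,v)\metric_\y(w,w)-\metric_\y^2(v,w)$, so that the entire discrepancy reduces to a discrepancy in the numerator, where only $\riemann$ is replaced by $\Riemann^\tau$ (respectively $\Riemann^{\pm\tau}$). This numerator discrepancy is precisely $\metric_\y(v,[\riemann_\y(v,w)-\Riemann^\tau_\y(v,w)]w)$, which can be controlled by the $\Vspace$-norm consistency bound already established in Theorem~\ref{thm:consRiemCurvTensor}.

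More concretely, I would first write
\begin{align*}
 \kappa_\y(v,w) - \mathbf{\kappa}_\y^{\tau}(v,w) = \frac{ \metric_\y \bigl( v, [\riemann_\y(v,w) - \Riemann^\tau_\y(v,w)] w\bigr) }{ \metric_\y(v,v)\metric_\y(w,w) - \metric_\y^2(v,w) }\,,
\end{align*}
which is well-defined as long as $v$ and $w$ are linearly independent so that the denominator is strictly positive (this is the only hypothesis implicit in the sectional curvature being finite). Next, I would use the continuity of $\metric_\y$ together with its uniform $\Vspace$-boundedness $|\metric_\y(a,b)|\leq C^\ast \|a\|_\Vspace \|b\|_\Vspace$ (from the coercivity/boundedness assumption in Section~\ref{sec:review}) to obtain
\begin{align*}
 \bigl| \kappa_\y(v,w) - \mathbf{\kappa}_\y^{\tau}(v,w)\bigr| \leq \frac{C^\ast \|v\|_\Vspace\, \bigl\| [\riemann_\y(v,w) - \Riemann^\tau_\y(v,w)] w\bigr\|_\Vspace }{ \metric_\y(v,v)\metric_\y(w,w) - \metric_\y^2(v,w) }\,.
\end{align*}
Finally, Theorem~\ref{thm:consRiemCurvTensor} provides $\|[\riemann_\y(v,w) - \Riemann^\tau_\y(v,w)] w\|_\Vspace = O(\tau)$ under the hypotheses $m\geq 4$ and $\beta\geq 2$, which gives the first claim. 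The second claim is obtained by the identical argument with $\Riemann^\tau$ replaced by $\Riemann^{\pm\tau}$, invoking the second order bound $O(\tau^2)$ from Theorem~\ref{thm:consRiemCurvTensor} with $m\geq 5$ and $\beta \geq \frac32$.

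There is no real obstacle in this proof: both statements follow by direct substitution once the denominators are recognized to be identical. The only subtlety worth noting is the linear independence assumption on $v,w$, which guarantees a strictly positive denominator uniformly in $\tau$ so that the asymptotic rate transfers directly from the numerator to the quotient; for $v,w$ approaching linear dependence the constants hidden in $O(\tau)$ and $O(\tau^2)$ would blow up, but this is already a feature of the continuous sectional curvature itself.
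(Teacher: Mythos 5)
Your proposal is correct and follows essentially the same route as the paper: identical denominators, linearity of the metric in its second argument to reduce the numerator to $\metric_\y(v,[\riemann_\y(v,w)-\Riemann^\tau_\y(v,w)]w)$, the uniform boundedness of $\metric$ to bound this by $C\|v\|_\Vspace\,\|[\riemann_\y(v,w)-\Riemann^\tau_\y(v,w)]w\|_\Vspace$, and then Theorem~\ref{thm:consRiemCurvTensor} for the rates $O(\tau)$ and $O(\tau^2)$. Your remark on linear independence of $v,w$ (so that the denominator is nonzero) is a reasonable observation that the paper leaves implicit.
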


Figures~\ref{fig:torus} and \ref{fig:torusConvergence} illustrate the numerical computation and convergence of the sectional curvature on a two-dimensional embedded surface.

\begin{figure}
\setlength\unitlength{.21\linewidth}
\strut\hfill
\includegraphics[width=\unitlength]{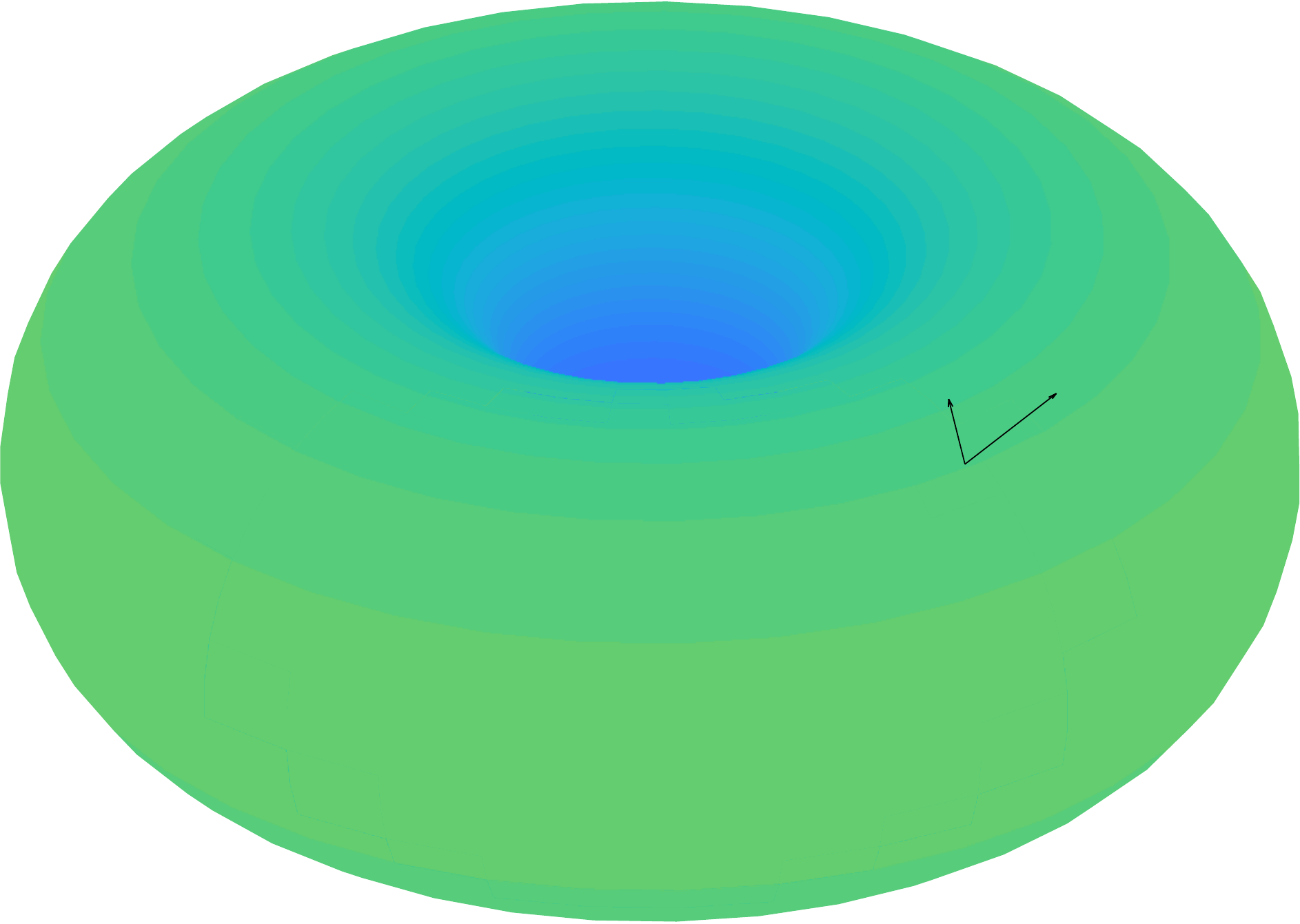}%
\includegraphics[width=\unitlength]{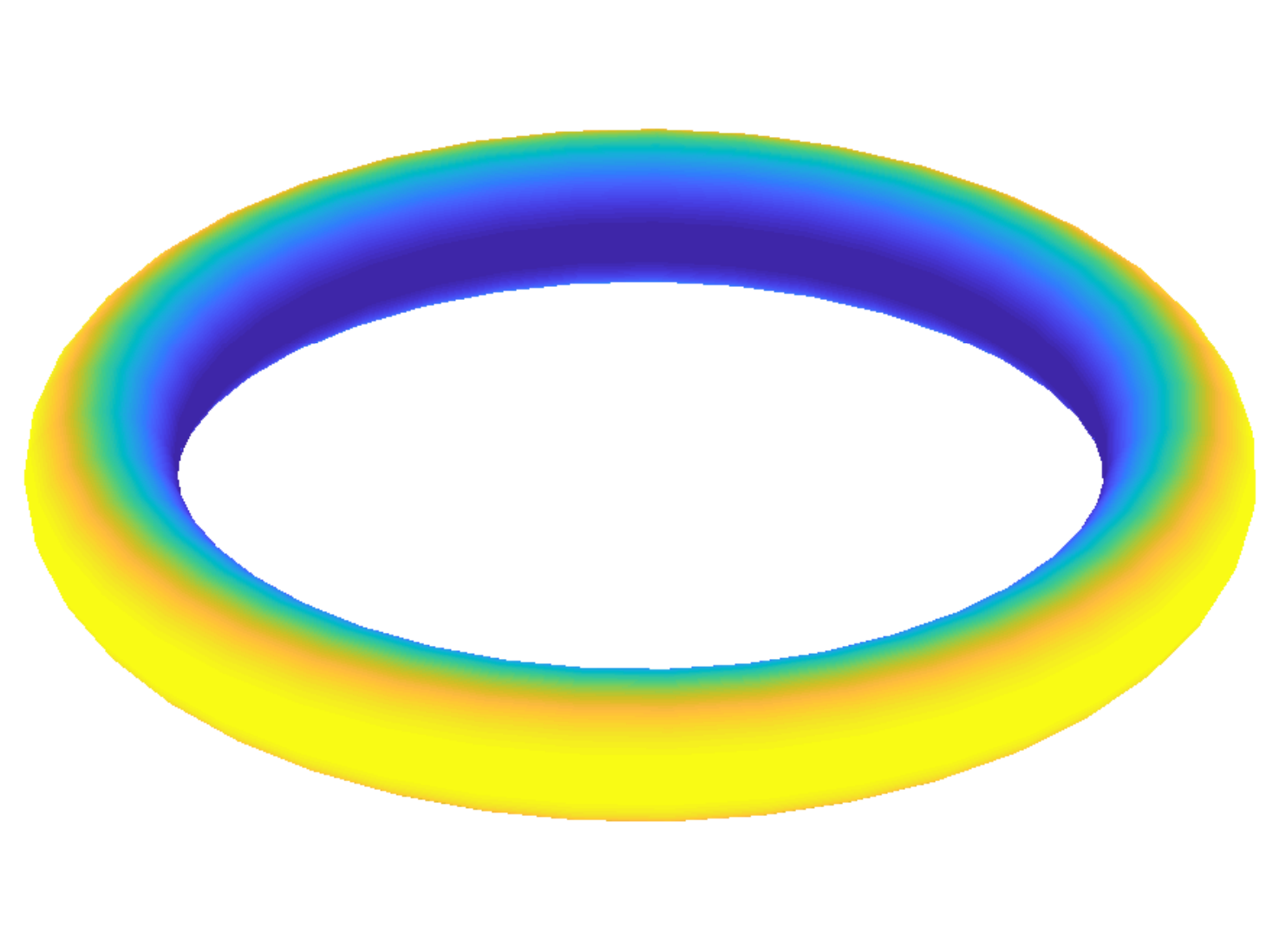}\hfill\hfill%
\includegraphics[width=\unitlength]{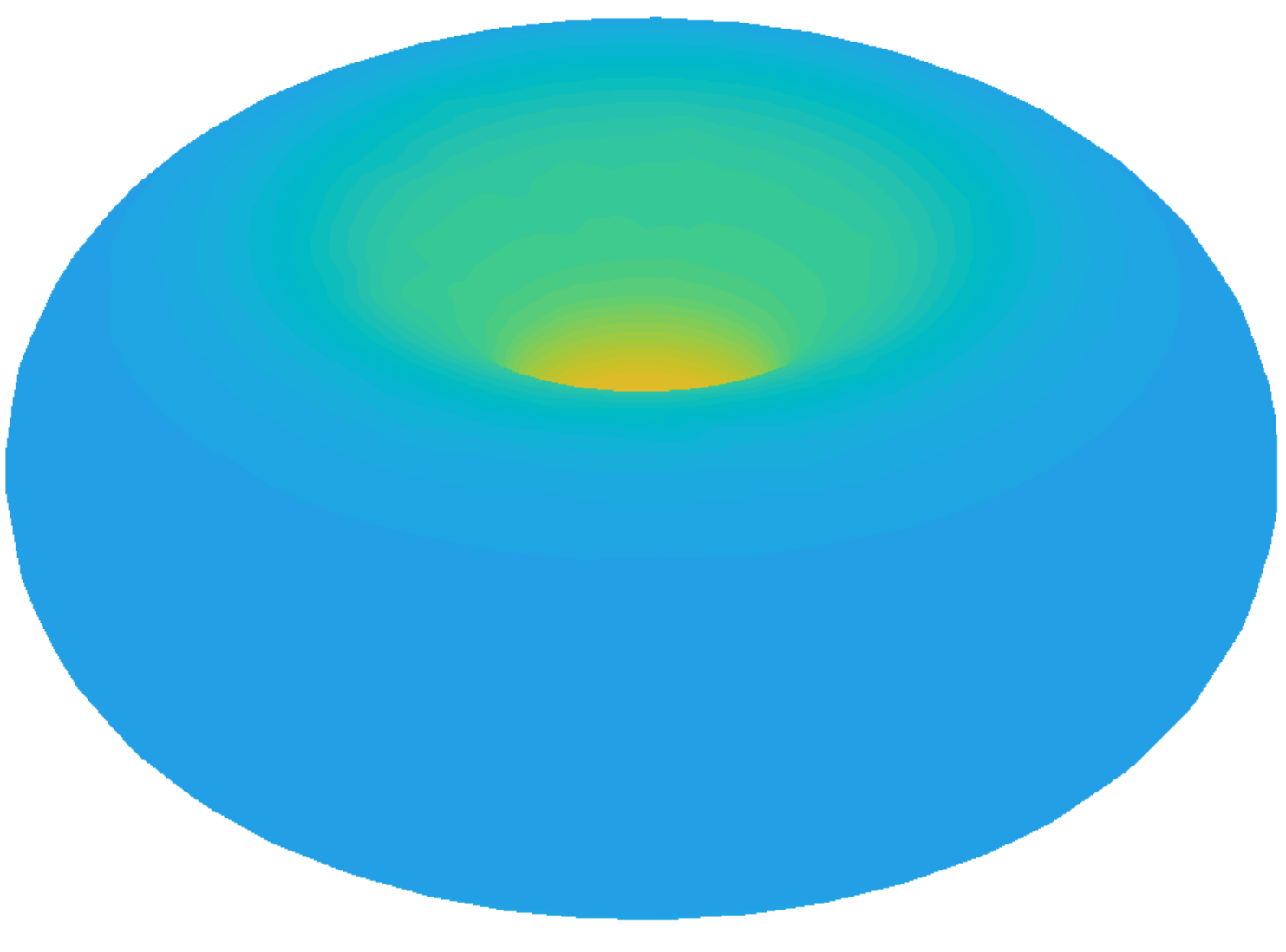}%
\includegraphics[width=\unitlength]{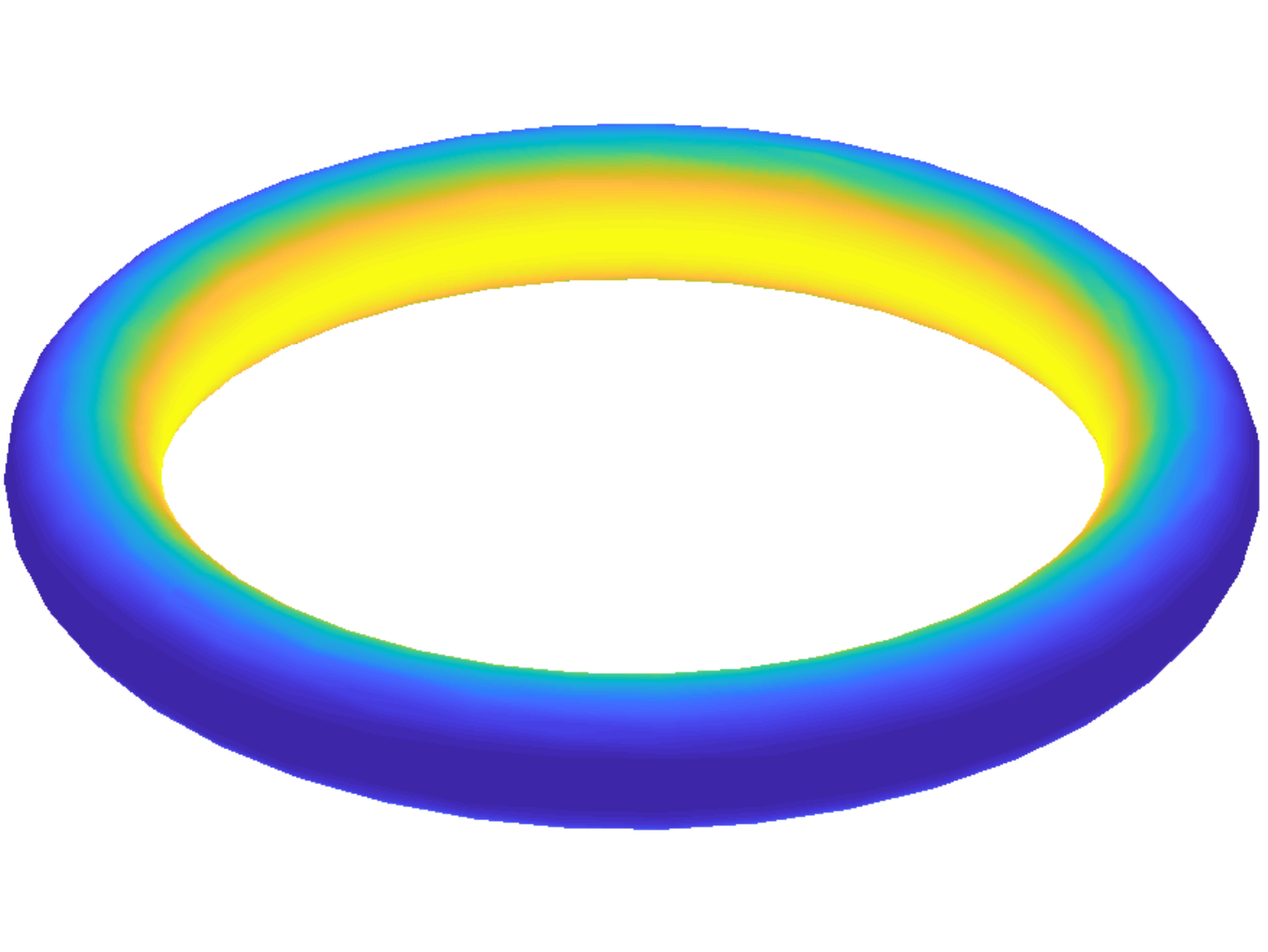}\hfill\ \\
\strut\hfill-4 \includegraphics[width=\unitlength]{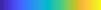} 3\hfill\hfill$-5\cdot10^{-5}$ \includegraphics[width=\unitlength]{images/colorbar} $7\cdot10^{-5}$\hfill\ 
\caption{Left: Gaussian curvature on two tori (each with center line radius of $\sqrt2$) numerically approximated via $\kappa^{\pm\tau}(v,w)$ for $\tau=10^{-2}$ and $\beta=\frac32$.
The tori were parameterized by toroidal and poloidal angles, $v=(1,0)$, $w=(0,1)$, and $\energy$ is taken as the squared Euclidean distance in the embedding space $\R^3$.
The black arrows on the left indicate the position and tangent vectors for which Figure~\ref{fig:torusConvergence} shows a convergence study of the approximated sectional curvature.
Right: Corresponding approximation error $(\kappa^{\pm\tau}-\kappa)(v,w)$.
}
\label{fig:torus}
\end{figure}

\begin{figure}[H]
\centering
\begin{tikzpicture}
\begin{loglogaxis}[xmin=0.005, xmax=1e-0, ymin=1e-6, ymax=3.0, legend pos=north west, ylabel = relative error, xlabel = $\tau$
]
\addplot[color=black, thick, dotted] plot coordinates {  (1.00000000000000000 , 2.60133582037034916)  (0.55519359143862090 , 0.70514245722938651)  (   0.30823992397451433 , 0.31337773154819404)  (   0.17113283041617811 , 0.16090547679432438)  (   0.09501185073181440 , 0.08630403877328301)  (   0.05274997063702620 , 0.04721138033466334 )  (   0.02928644564625237 , 0.02600080861945030 )  (   0.01625964693881482 , 0.01438013437291902)  (   0.00902725177948458 , 0.00797661969403041  )  (   0.00501187233627272,   0.00453021968821463 ) };
\addlegendentry{$\kappa^\tau$}
\addplot[color=black, thick, dashed] plot coordinates {  (1.00000000000000000 , 5.89271033935994e-01)  (0.55519359143862090 , 2.16942609731176e-01)  (   0.30823992397451433 , 6.72739164352992e-02 )  (   0.17113283041617811 , 2.13457594690691e-02)  (   0.09501185073181440 ,  6.61950532543652e-03 )  (   0.05274997063702620 , 2.03682906063085e-03)  (   0.02928644564625237 , 6.26160959342312e-04 )  (   0.01625964693881482 , 1.91844421654184e-04)  (   0.00902725177948458 , 5.94380564209744e-05   )  (   0.00501187233627272,  1.63140029911033e-05 ) };
\addlegendentry{$\kappa^{\pm\tau}$}
\addplot[thin,color=black] plot coordinates { (0.08, 0.06) ( 0.008,  0.006 ) ( 0.08, 0.006 ) (0.08, 0.06) };
\addplot[thin,color=black] plot coordinates { (0.2, 0.02) ( 0.02,  0.0002 ) ( 0.2, 0.0002 ) (0.2, 0.02) };
\end{loglogaxis}
\end{tikzpicture}
\caption{Relative error for approximating the Gaussian curvature at the marked position of the left torus from the previous figure, using one-sided and central differences.
The triangles indicate the slopes of quadratic and linear convergence.}
\label{fig:torusConvergence}
\end{figure}
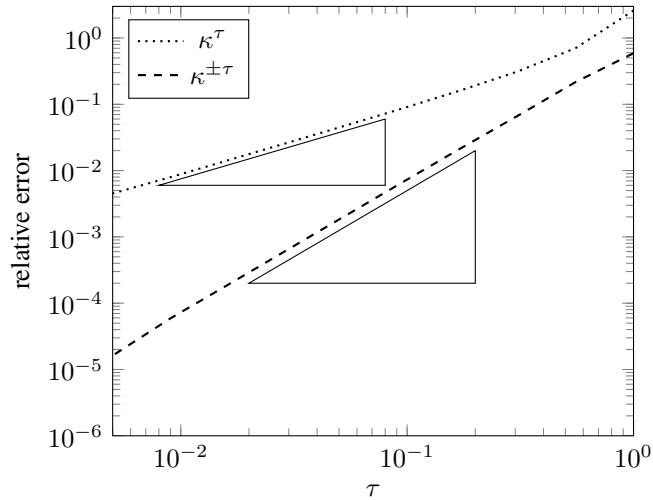

\section{Discrete shell space}\label{sec:ShellSpace}
\definecolor{myOrange}{RGB}{255, 169, 87 }
\definecolor{myGreen}{RGB}{180, 255, 162  }
\definecolor{myGrey}{RGB}{187, 187, 187  }

\newcommand{\dW}{{\energy}}
\renewcommand{\shell}{{y}}

In this section, we consider the manifold $\manifold$ of \emph{discrete shells}, \ie triangle meshes with fixed mesh connectivity whose deformations are modelled based on a physical shell model. 
The concept of discrete shells has been introduced by Grinspun et al. \cite{GrHiDeSc03} as a tool for efficient and reliable physical simulation of shell deformations.
Fr\"ohlich and Botsch \cite{FrBo11} studied the effective discretization of membrane and bending distortion. 
In   \cite{HeRuWa12,HeRuSc14} this approach was investigated in the context of shape space theory with a metric representing a physical dissipation due to membrane and bending distortion.  
In fact, a suitable Riemannian metric on  $\manifold$ is induced by the Hessian of an appropriate elastic energy functional. 
In the following, we briefly summarize the underlying physical model and discuss its discretization, for details we refer the reader to \cite{HeRuWa12, HeRuSc14}.

Before we introduce a discrete surface model based on triangle meshes, let us briefly investigate the case of smooth surfaces $\shell \subset \R^3$ representing the middle layer of a thin sheet of some material.
For a homogeneous, isotropic and elastic material, the deformation energy $\energy_\shell[\phi]$ of an elastic deformation $\phi: \shell \to \R^3$ can be captured by membrane and bending contributions, 
reflected by the change of first (\ie metric) and second (\ie curvature) fundamental forms, respectively. 
In the following, we distinguish between the undeformed reference surface $\shell$ and the deformed surface $\shell^\phi = \phi(\shell)$.
For parametric surfaces $\shell$ we denote by $g$ and $h$ the matrix representation of the first and second form, respectively, in a parameter domain $\omega \subset \R^2$. 
To this end, the membrane distortion induced by $\phi$ can be described pointwise by a (reduced) Cauchy-Green strain tensor $A[\phi] = g^{-1}g^\phi \in \R^{2,2}$,
and the bending distortion is captured pointwise by the relative shape operator $B[\phi] = g^{-1}(h - h^\phi) \in \R^{2,2}$.
The total elastic energy is then given by
\begin{align}\label{eq:smoothShellEnergy}
 \energy_\shell[\phi]= \int_\omega W_{\mem}(A[\phi])\, \sqrt{\det g} \d \xi +\delta^2 \int_\omega W_{\bend}(B[\phi]) \,\sqrt{\det g} \d \xi ,
\end{align}
where $\delta >0$ represents the physical thickness of the thin elastic material. 
For $W_{\mem}$, we make use of a classical hyperelastic energy density as in equation~(8) in \cite{HeRuWa12}. 
Additionally, we set $W_{\bend}(B[\phi]) = (\tr B[\phi])^2$, which measures the squared error of the mean curvature. 
It has been shown in \cite{HeRuSc14} that the Hessian of $\energy_\shell$ induces a Riemannian metric on the space of surfaces modulo rigid body motions. 

Now we consider an approximation of the smooth surface by a triangle mesh -- which we shall also denote by $\shell$. 
Let $\mathcal{V} = \{1, \ldots, n\}$, $\mathcal{T} \subset \mathcal{V}\!\times\!\mathcal{V}\!\times\!\mathcal{V}$ and $\mathcal{E} \subset \mathcal{V}\!\times\!\mathcal{V}$ be the set of vertices, triangles and edges of $\shell$, respectively. 
A (discrete) deformation of $\shell$ can be identified with a mapping $\phi:\mathcal{V}\to\R^3$ which is interpolated piecewise linearly over triangles. 
Obviously, $D\phi$ is constant on triangles $t \in \mathcal{T}$ which leads to an elementwise constant, discrete first fundamental form $g = g_t$.
In particular, we can make use of the same energy density $W_{\mem}$ as above now measuring changes in the discrete first fundamental form.
For the discretization of the bending term, which requires second derivatives of surface positions in the smooth case, 
we consider the widely used \emph{discrete shell} energy \cite{GrHiDeSc03}, where bending is quantified by changes of dihedral angles $\theta_e$ living on edges $e \in \mathcal{E}$. 
The discrete version of the elastic deformation energy \eqref{eq:smoothShellEnergy} reads
\begin{align}\label{eq:discreteShellsEnergy}
 \dW_\shell[\phi] = \sum_{t \in \mathcal{T}} a_t \, W_{\mem}(g^{-1}_t g^\phi_t ) + \delta^2 \sum_{e \in \mathcal{E}} \frac{(\theta_e - \theta^\phi_e)^2}{d_e}l_e^2\, ,
\end{align}
where $a_t$ is the volume of triangle $t$, $l_e$ is the length of edge $e$ and $d_e = \tfrac13(a_t + a_{t'})$ if $e = t \cap t'$. 
In fact, it can be shown that the simplified bending term in \eqref{eq:discreteShellsEnergy} can be derived formally from $W_{\bend}(B[\phi])$ by an appropriate linearization \cite{He16}. 
For a more rigorous discussion of the discretization of the bending term we refer to \cite{CoMo03} and \cite{GlOl19}. 
Again, it has been shown in \cite{HeRuSc14} that the Hessian of \eqref{eq:discreteShellsEnergy} induces a Riemannian metric on the space of triangle meshes modulo rigid body motions.

Now we define the shape space of discrete shells $\manifold$ as the set of all triangle meshes with the same connectivity, 
\ie for two discrete shells $\shell,\tilde\shell\in\manifold$ there is a 1-to-1 correspondence between all vertices, faces and edges. 
In particular, a discrete shell $\shell$ is uniquely described by its vector of vertex positions living in $\R^{3n}$. 
To this end, the functional $(\y,\tilde \y) \mapsto \energy\myArg{\y}{\tilde \y}$ required in the discrete Riemannian calculus is simply defined as $\energy\myArg{\shell}{\tilde \shell} := \dW_\shell[\phi]$ 
with $\dW_\shell$ given by \eqref{eq:discreteShellsEnergy} and $\phi$ is the \emph{unique} discrete deformation such that $\tilde\shell = \phi(\shell)$. 

In the following, we will first consider a toy example (a deformation of a triangulated sphere) 
to investigate the convergence behaviour of the discrete sectional curvature in the space of discrete shells when increasing the temporal and/or spatial resolution 
as well as the dependence on the (squared) thickness $\mu := \delta^{2}$ of the elastic material, which we shall also denote as \emph{bending weight}. 
Finally, we consider a family of almost isometric deformations of a more complex cactus shape and compute sectional curvature patterns.

\paragraph{Sectional curvature near spherical discrete shell.} Let $\shell\in\R^{3n}$  be the vector of vertex positions of a triangluation of the unit sphere $S^2$ with vertices lying on $S^2$.
Furthermore, we consider tangent vectors $v$ and $w$ corresponding to a stretching by a factor $\tfrac32$ in the first and second component of the vertex positions, respectively. 
Figure~\ref{fig:secCurv_DiscreteShells_sphere_convergence} displays the decay of the relative error 
$\left| \frac{\kappa - \kappa^\tau}{\kappa}\right|(v,w)$ and $\left| \frac{\kappa - \kappa^{\pm \tau}}{\kappa}\right|(v,w)$ as a function of the time step size $\tau$
for different spatial resolutions and a bending weight $\mu = 10^{-3}$.
Here, the value of the discrete sectional curvature for the smallest time step $\tau = \tau_{\min}$ was used as ground truth. 
As predicted by our analytical results, we observe a quadratic convergence when using the central difference quotient 
and a convergence which is experimentally still better than linear but not yet quadratic when using the one-sided difference quotient.
Figure~\ref{fig:secCurv_DiscreteShells_sphere_convergenceInH_differentMus} 
shows the experimental convergence of $\kappa^{\pm\tau}$ for fixed $\tau = 10^{-2}$ and $\mu = 10^{-3}$ for decreasing 
grid size $h$ of the underlying triangular mesh as well as the dependence on the thickness parameter $\mu$ in the discrete shell model for fixed $\tau$ and $h$. 
In fact, one observes second order convergence with respect to a uniform refinement of triangles as well as a decay of the sectional curvature for decreasing values of $\mu$.

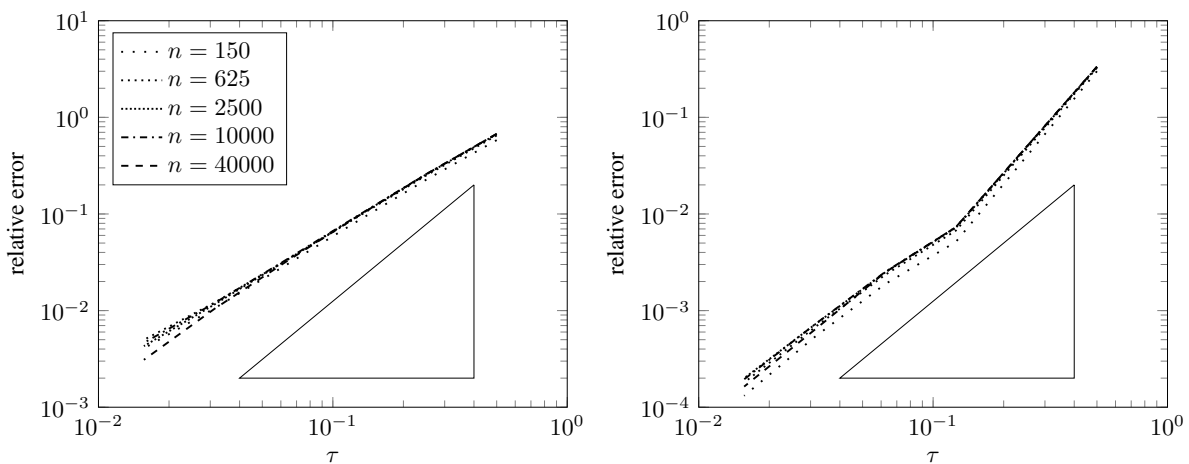
\begin{figure}[H]
\centering
\begin{tikzpicture}[scale=0.9]
\begin{loglogaxis}[xmin=1e-2,xmax=1e-0,ymin=1e-3,ymax=1e+1, legend pos=north west, legend cell align={left},ylabel = relative error,xlabel=$\tau$]
\addplot[color=black, thick, loosely dotted] plot coordinates { ( 0.5, 0.580974 ) ( 0.25, 0.225961 ) ( 0.125, 0.0819605 ) ( 0.0625, 0.0291723 ) ( 0.03125, 0.0105529 ) ( 0.015625, 0.00406301 )   };
\addlegendentry{$n = 150$}
\addplot[color=black, thick, dotted] plot coordinates { ( 0.5, 0.64949 ) ( 0.25, 0.250004 ) ( 0.125, 0.0905021 ) ( 0.0625, 0.0324547 ) ( 0.03125, 0.0120694 ) ( 0.015625, 0.00498563 )  };
\addlegendentry{$n = 625$}
\addplot[color=black, thick, densely dotted] plot coordinates { ( 0.5, 0.66911 ) ( 0.25, 0.256239 ) ( 0.125, 0.0921003 ) ( 0.0625, 0.0324804 ) ( 0.03125, 0.0115667 ) ( 0.015625, 0.00429072 )   };
\addlegendentry{$n = 2500$}
\addplot[color=black, thick, dashdotted] plot coordinates { ( 0.5, 0.674391 ) ( 0.25, 0.258243 ) ( 0.125, 0.092983 ) ( 0.0625, 0.0329849 ) ( 0.03125, 0.0119458 ) ( 0.015625, 0.00464054 )  };
\addlegendentry{$n = 10000$}
\addplot[color=black, thick, dashed] plot coordinates { ( 0.5, 0.675157 ) ( 0.25, 0.257523 ) ( 0.125, 0.0917174 ) ( 0.0625, 0.031528 ) ( 0.03125, 0.0104235 ) ( 0.015625, 0.00309561 )   };
\addlegendentry{$n = 40000$}
\addplot[color=black, thin] plot coordinates { (0.4, 0.2) (0.04, 0.002) (0.4, 0.002) (0.4, 0.2) };
\end{loglogaxis}
\end{tikzpicture}
\begin{tikzpicture}[scale=0.9]
\begin{loglogaxis}[xmin=1e-2,xmax=1e-0,ymin=1e-4,ymax=1e-0, legend pos=north west,ylabel = relative error,xlabel=$\tau$]
\addplot[color=black, thick, loosely dotted] plot coordinates { ( 0.5, 0.30027 )  ( 0.125, 0.00513938 ) ( 0.0625, 0.00188461 ) ( 0.03125, 0.000533602 ) ( 0.015625, 0.000131361 )  };
\addplot[color=black, thick, dotted] plot coordinates { ( 0.5, 0.325127 )  ( 0.125, 0.00672624 ) ( 0.0625, 0.00234951 ) ( 0.03125, 0.000670019 ) ( 0.015625, 0.000180093 )  };
\addplot[color=black, thick, densely dotted] plot coordinates { ( 0.5, 0.33226 )  ( 0.125, 0.00719349 ) ( 0.0625, 0.00248957 ) ( 0.03125, 0.000714752 ) ( 0.015625, 0.000199649 )  };
\addplot[color=black, thick, dashdotted] plot coordinates { ( 0.5, 0.334127 )  ( 0.125, 0.00730559 ) ( 0.0625, 0.00251646 ) ( 0.03125, 0.000718043 ) ( 0.015625, 0.000194521 )  };
\addplot[color=black, thick, dashed] plot coordinates { ( 0.5, 0.334636 )  ( 0.125, 0.0073022 ) ( 0.0625, 0.00249226 ) ( 0.03125, 0.000633795 ) ( 0.015625, 0.000163492 )  };
\addplot[color=black, thin] plot coordinates { (0.4, 0.02) (0.04, 0.0002) (0.4, 0.0002) (0.4, 0.02) };
\end{loglogaxis}
\end{tikzpicture}
\caption{Convergence of the discrete sectional curvature $\kappa_\shell^\tau(v,w)$ (left) and $\kappa_\shell^{\pm\tau}(v,w)$ (right) for a shape $\shell$ represented by the vertex vector in $\R^{3n}$ 
of a triangulation of  $S^2$. 
We plot the relative error $\left|\frac{\kappa - \kappa^\tau}{\kappa}\right|$ (left) as well as $\left|\frac{\kappa - \kappa^{\pm\tau}}{\kappa}\right|$ (right) as a function of $\tau$. 
The thin solid triangles have slope $2$. }
\label{fig:secCurv_DiscreteShells_sphere_convergence}
\end{figure}

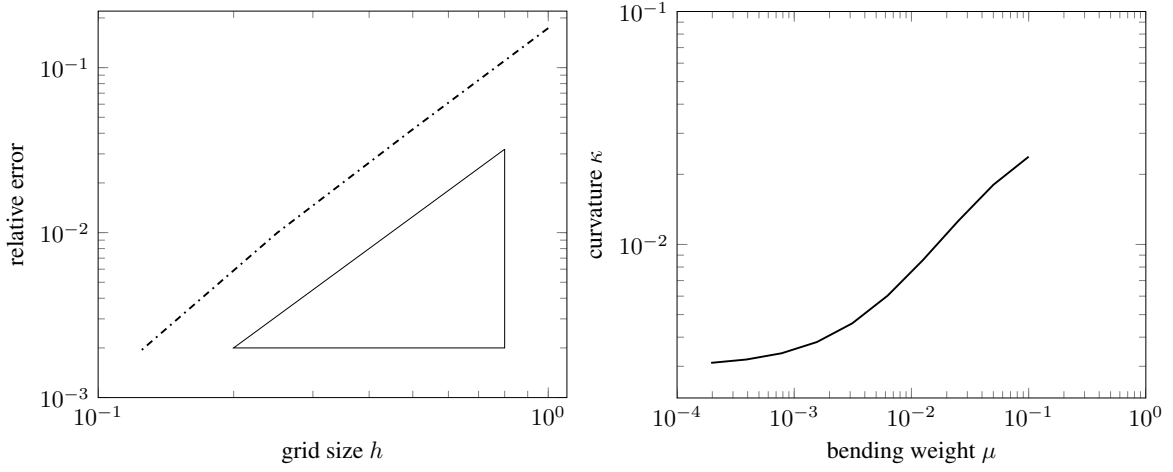
\begin{figure}[H]
\centering
\begin{tikzpicture}[scale=0.9]
\begin{loglogaxis}[xmin=0.1,xmax=1.1,ymin=0.001,ymax=0.22, legend pos=north west,xlabel = grid size $h$, ylabel = relative error]
\addplot[color=black, thick, dashdotted] plot coordinates { ( 1.0, 0.173603 ) ( 0.5, 0.042268 ) ( 0.25, 0.00997576 ) ( 0.125, 0.00194468 )  };
\addplot[color=black, thin] plot coordinates { (0.8, 0.032) (0.2, 0.002) (0.8, 0.002) (0.8, 0.032) };
\end{loglogaxis}
\end{tikzpicture}
\begin{tikzpicture}[scale=0.9]
\begin{loglogaxis}[xmin=0.0001,xmax=1.0, ymax=0.1, xlabel = bending weight $\mu$,ylabel = curvature $\kappa$]
\addplot[color=black, thick] plot coordinates { (0.1, 0.023806) (0.05, 0.0180375) (0.025, 0.0125942)  (0.0125, 0.00855959)  (0.00625, 0.00602473)  (0.003125, 0.00458566)  (0.0015625, 0.00381388)  (0.00078125, 0.00341265)  (0.000390625, 0.00320767)  (0.000195313, 0.00310399)  };
\end{loglogaxis}
\end{tikzpicture}
\caption{Left: convergence of the discrete sectional curvature  $\kappa_\shell^{\pm\tau}(v,w)$ for decreasing spatial grid size $h\to 0$ of the triangular mesh (using fixed $\tau = 0.01$ and $\mu =  10^{-3}$).
Right: dependence of the discrete sectional curvature on  the bending weight $\mu = \delta^2$ (using fixed spatial resolution and $\tau=0.01$).}
\label{fig:secCurv_DiscreteShells_sphere_convergenceInH_differentMus}
\end{figure}

\paragraph{Sectional curvature for a triangulated cactus shape.} As a more complex geometric model, we consider a triangulated cactus having $n = 5261$ vertices and a vector of vertex positions $\shell$.
We investigate different pairs of displacement vectors as tangent vector arguments for the computation of sectional curvatures. 
In Figure~\ref{fig:secCurv_DiscreteShells_cactus_dissipation}, we consider subspaces spanned by the eigenmodes $v_i$ of the Hessian of the elastic energy \eqref{eq:discreteShellsEnergy} and compute the sectional curvature 
for all pairs from the set of eigenmodes of the lowest $i = 1, \ldots, 8$ eigenvalues. All sectional curvatures $\kappa_{ij} := \kappa_\shell(v_i, v_j)$ turn out to be negative. 
Furthermore, one observes varying values of sectional curvature reflecting the specific type of coupling in the underlying pair of eigenmodes.
In particular, $|\kappa_{ij}|$ turns out to be relatively large if $i,j \in \{1,2,7\}$, since all of the corresponding modes represent a strong displacement of the right arm. 
The same is observed with respect to the left arm if $i,j \in \{5,6\}$. Furthermore, all two-dimensional subspaces involving the vibration mode $i=4$ are comparably flat.
In this sense, the second order representation of the shell space manifold reflects the physics of the underlying shell model. 

\begin{figure}[H]
\vspace*{1cm}
\centering
\begin{minipage}[t]{0.35\textwidth}
\begin{tikzpicture}[scale=0.8]
  \begin{axis}[ point meta min=0.0,
            point meta max=2.28,
            xmax=9,
            ymax=9,
            xmin=0,
            ymin=0,
            enlargelimits=false,
            axis on top,
            xtick={1,2,3,4,5,6,7,8},
            ytick={1,2,3,4,5,6,7,8},
            width=8cm,
            height = 8cm]       
     \addplot [matrix plot*,point meta=explicit] file [meta=index 2]{images/shells/secCurv_fullCactus_dissipationModes_0p01mu_tau1_negative_8x8.dat};
  \end{axis}
\end{tikzpicture}\end{minipage}
\begin{minipage}[t]{0.62\textwidth}
 \setlength{\unitlength}{.25\linewidth}%
\begin{picture}(4,2.5)
\put(0,1.4){\resizebox{.95\unitlength}{!}{\includegraphics[trim=350 50 350 20, clip]{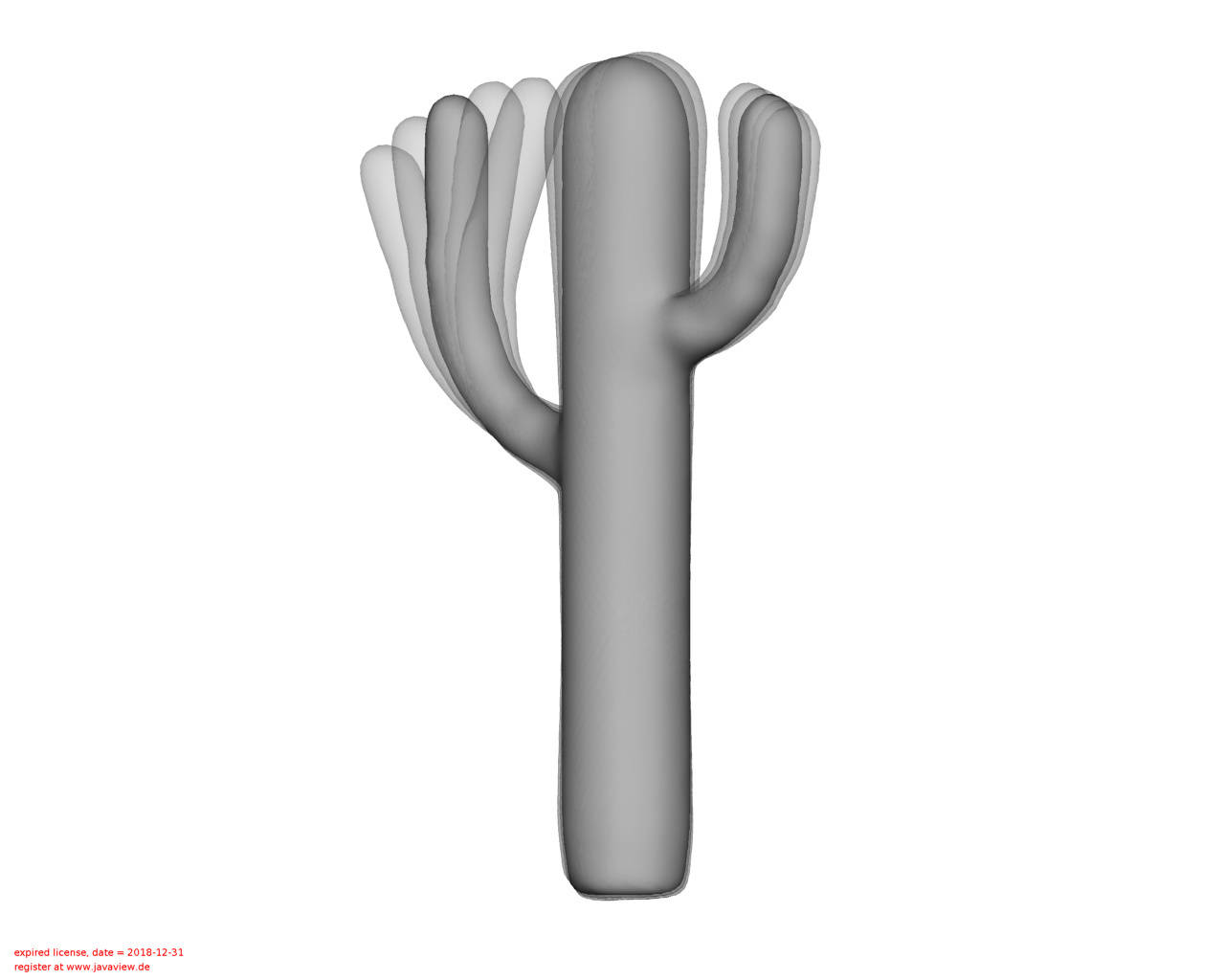}}}
\put(0.15,1.5){$1$}
\put(1,1.4){\resizebox{.95\unitlength}{!}{\includegraphics[trim=350 50 350 20, clip]{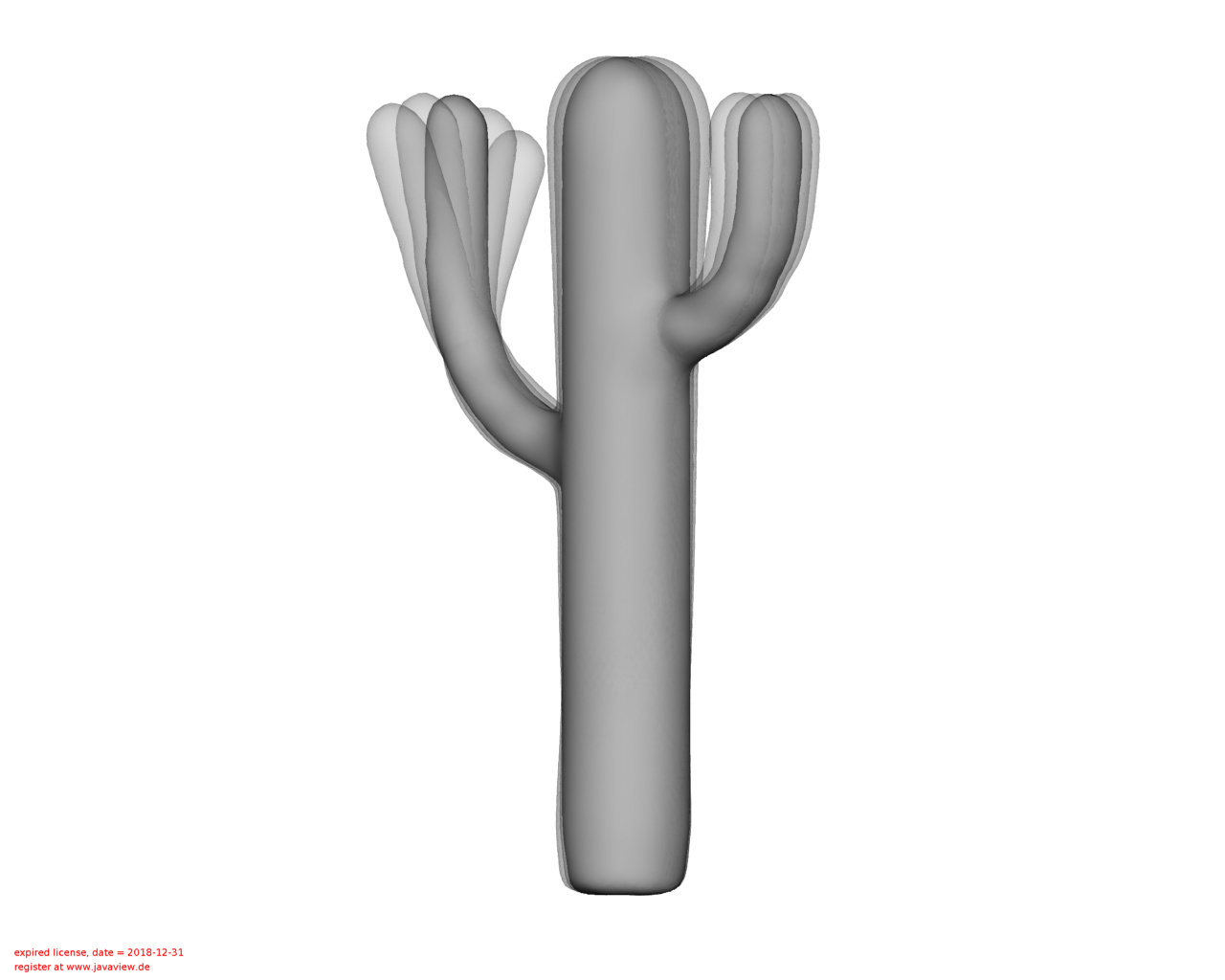}}}
\put(1.15,1.5){$2$}
\put(2,1.4){\resizebox{.95\unitlength}{!}{\includegraphics[trim=350 50 350 20, clip]{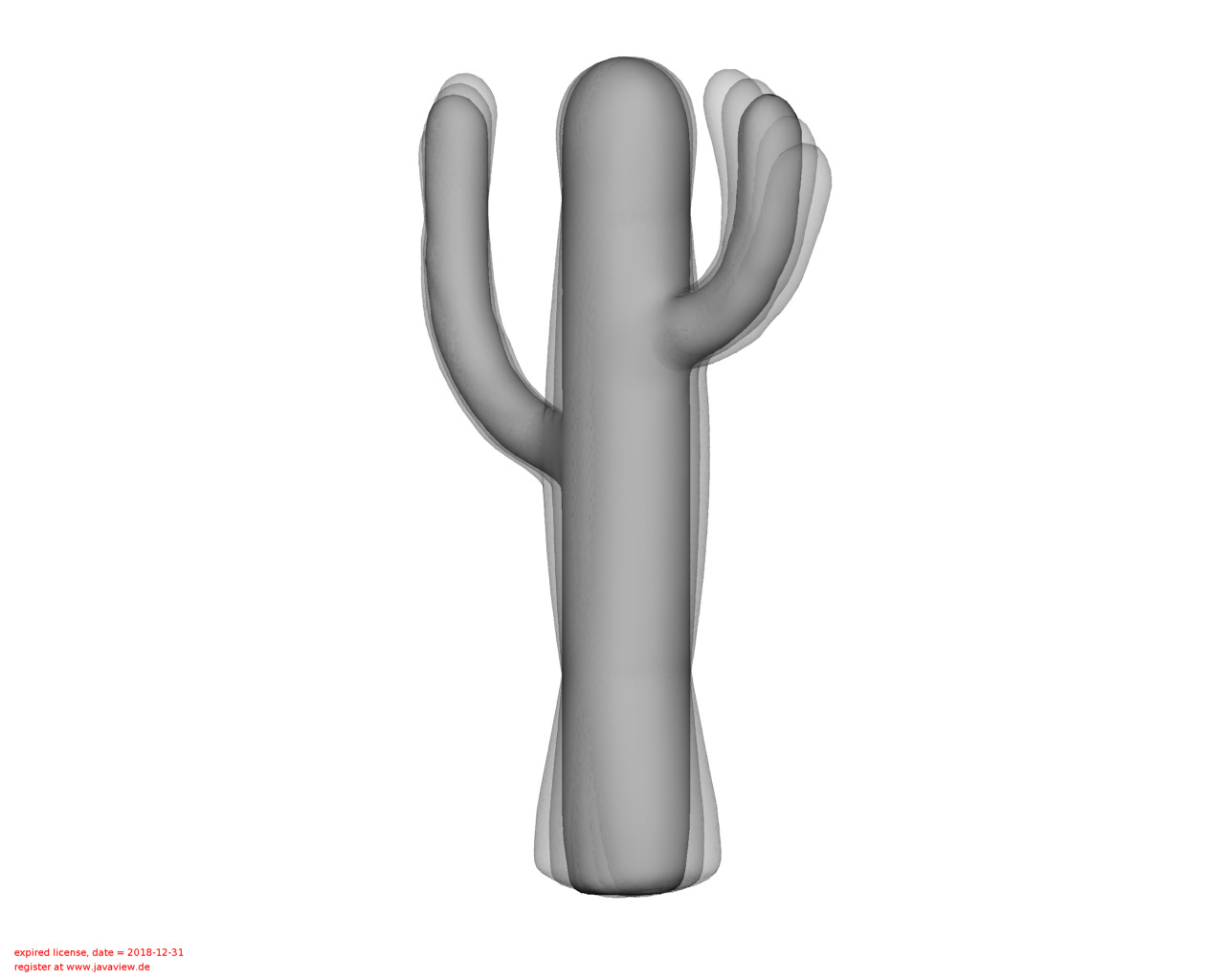}}}
\put(2.15,1.5){$3$}
\put(3,1.4){\resizebox{.95\unitlength}{!}{\includegraphics[trim=350 50 350 20, clip]{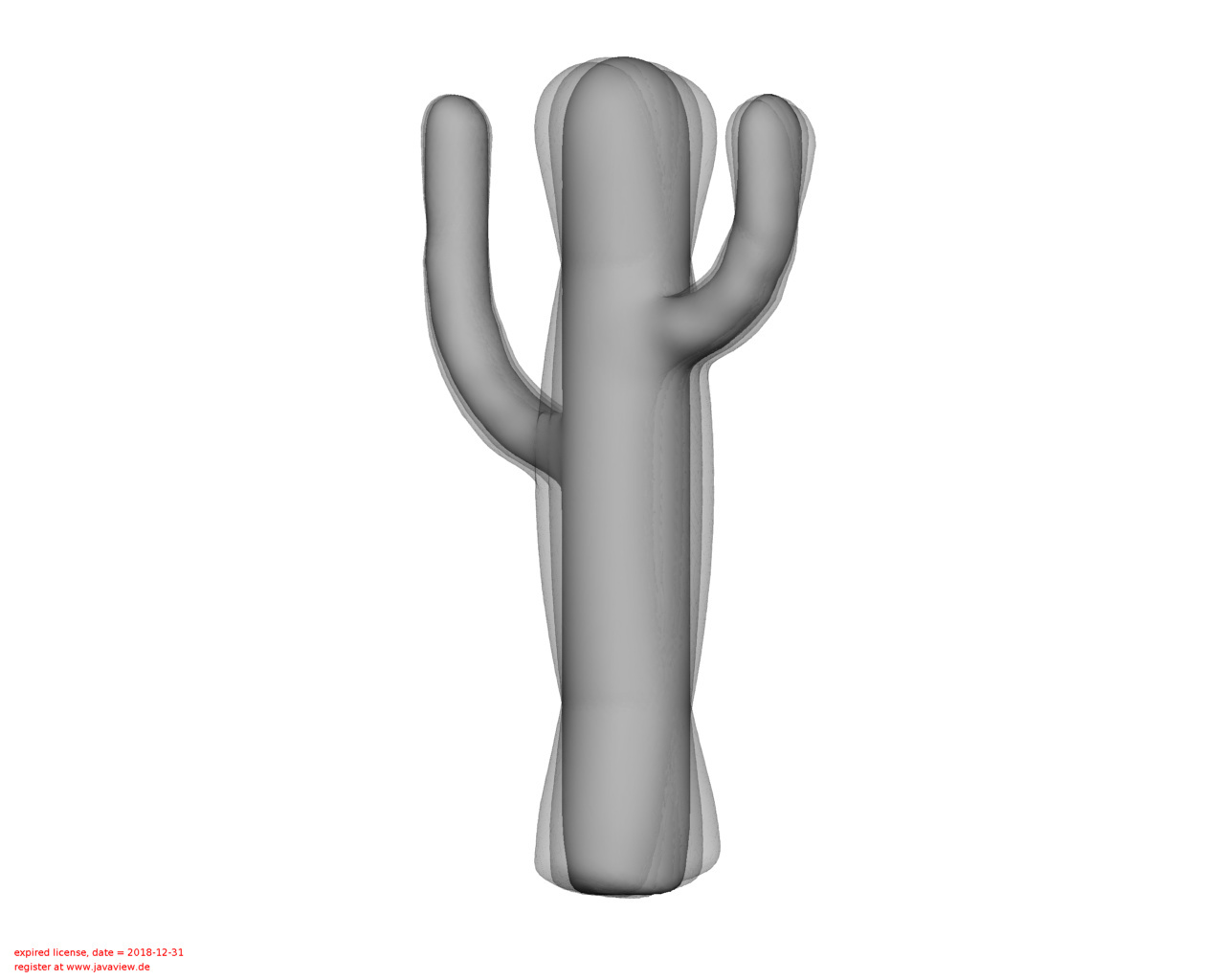}}}
\put(3.15,1.5){$4$}
\put(0,-0.2){\resizebox{.95\unitlength}{!}{\includegraphics[trim=350 50 350 20, clip]{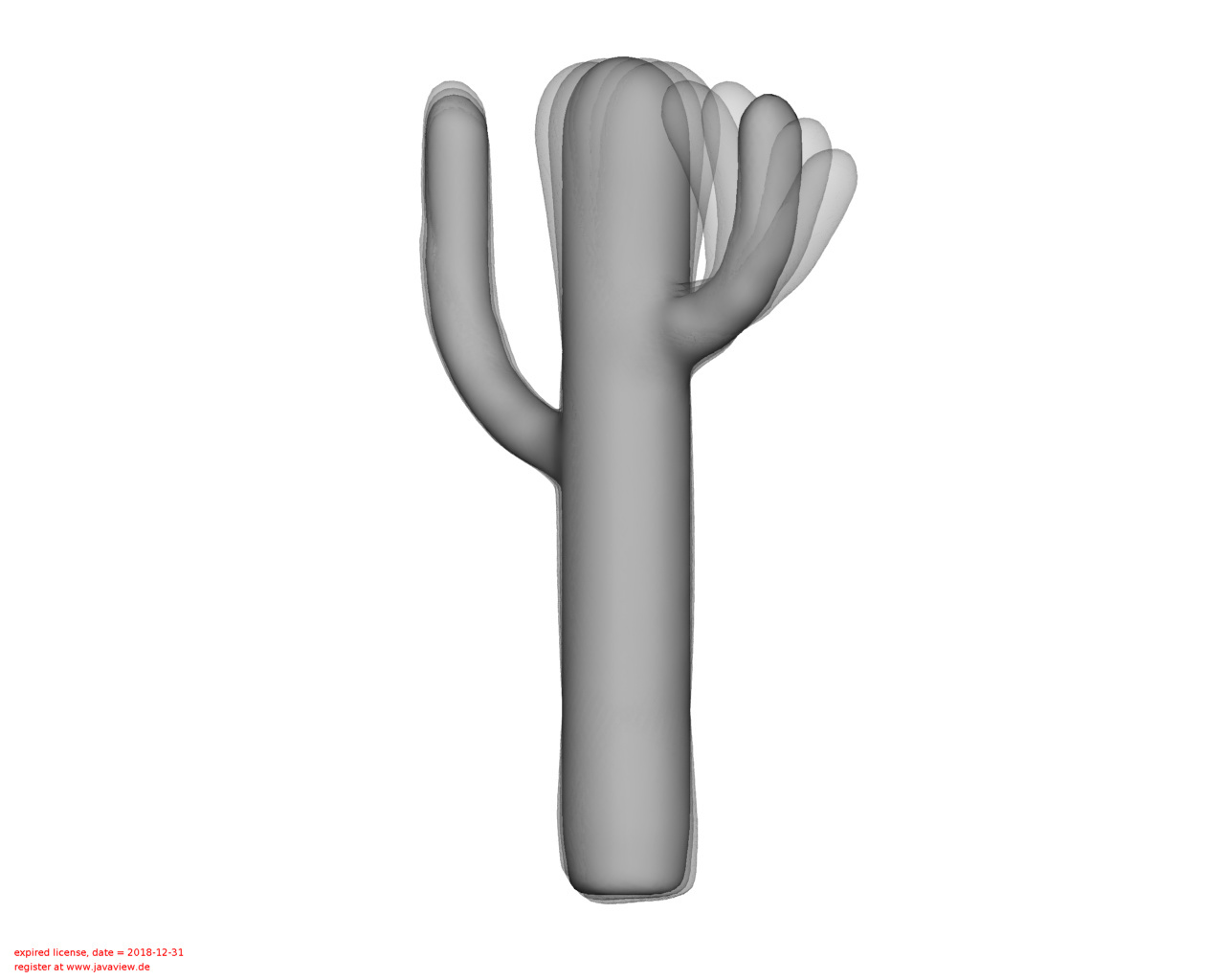}}}
\put(0.15,-0.1){$5$}
\put(1,-0.2){\resizebox{.95\unitlength}{!}{\includegraphics[trim=350 50 350 20, clip]{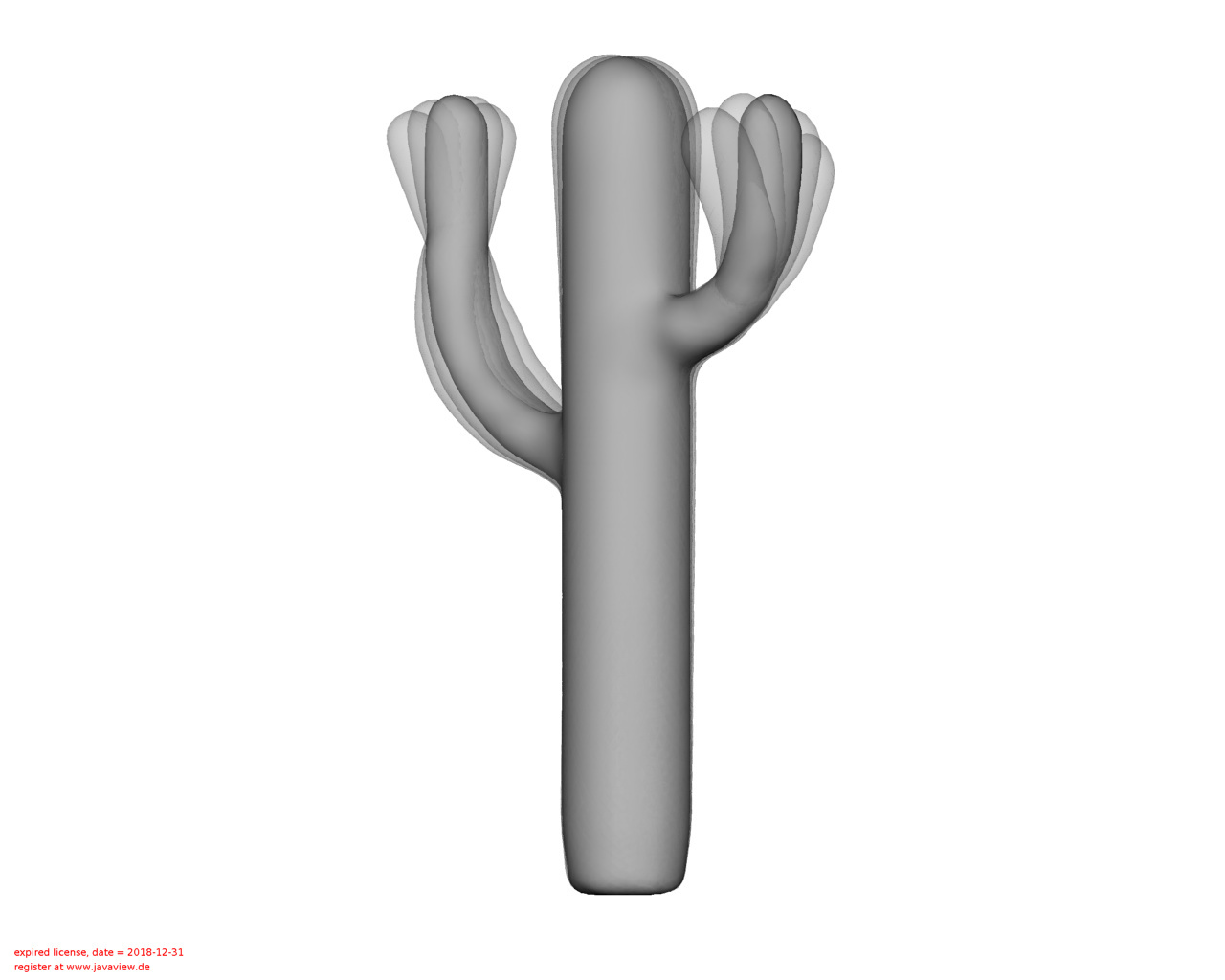}}}
\put(1.15,-0.1){$6$}
\put(2,-0.2){\resizebox{.95\unitlength}{!}{\includegraphics[trim=350 50 350 20, clip]{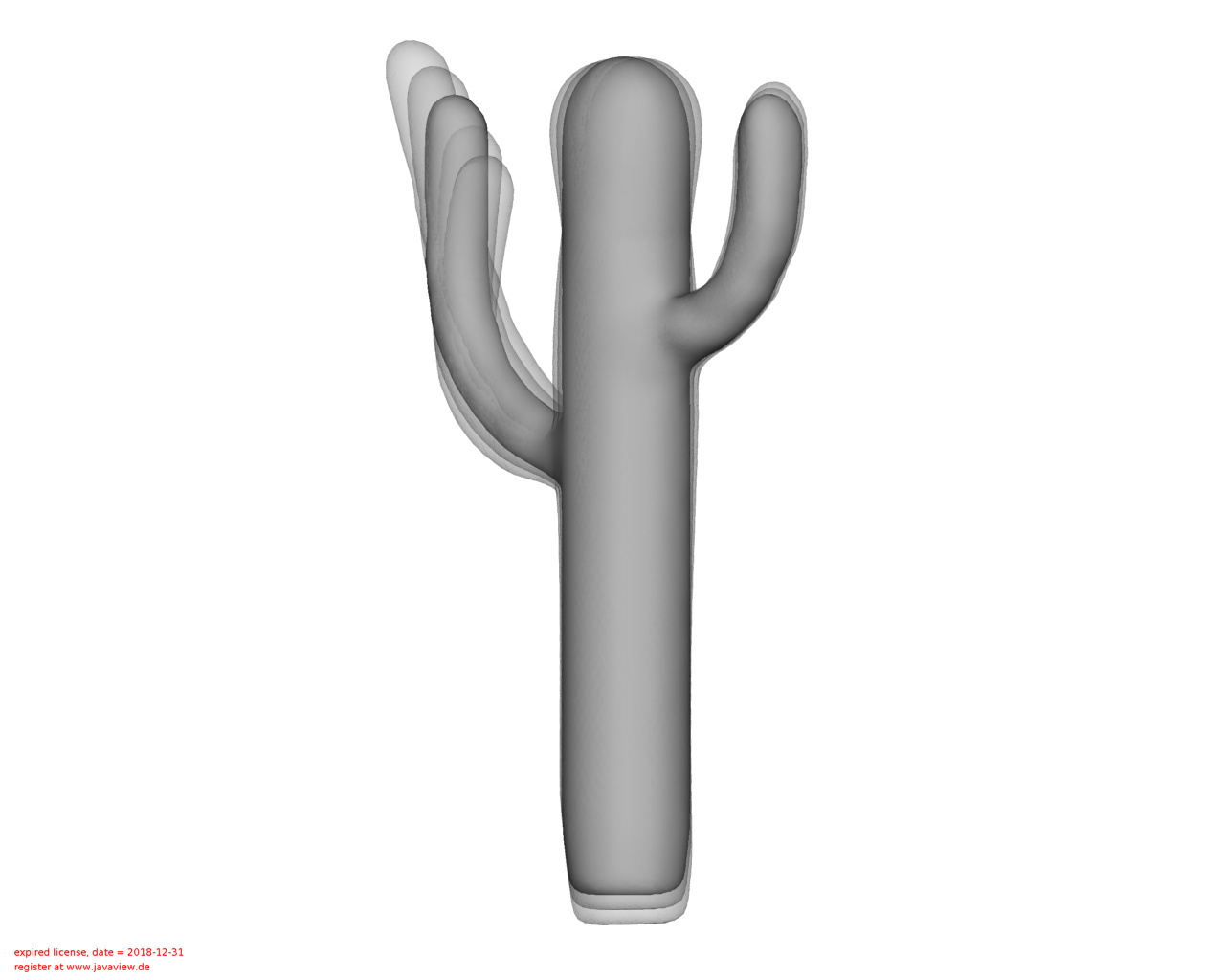}}}
\put(2.15,-0.1){$7$}
\put(3,-0.2){\resizebox{.95\unitlength}{!}{\includegraphics[trim=350 50 350 20, clip]{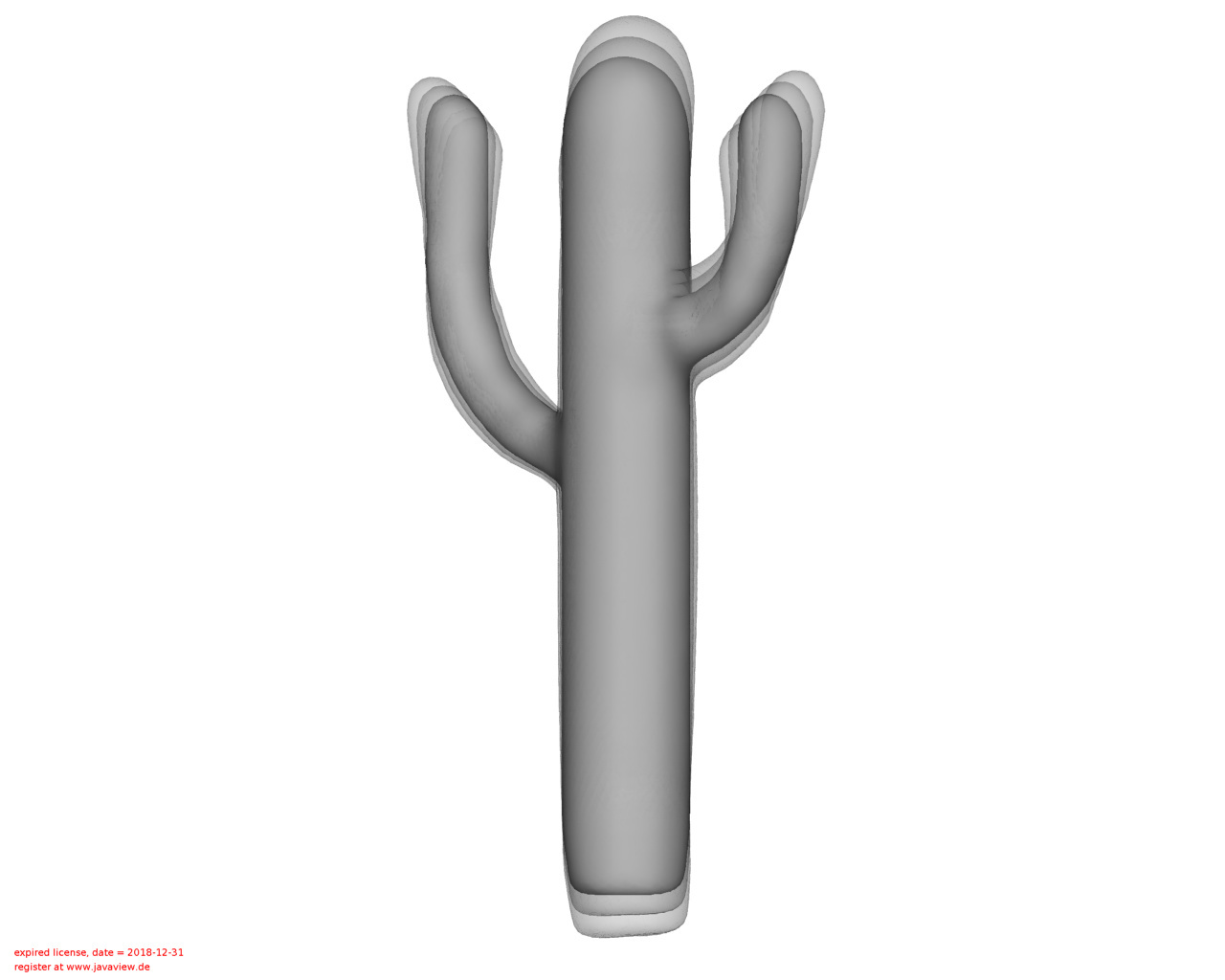}}}
\put(3.15,-0.1){$8$}
\end{picture}
\end{minipage}\\[1ex]
\caption{Sectional curvatures $\kappa_{ij} := \kappa^{\pm\tau}_{\shell}(v_i, v_j)$ for pairs of eigenmodes $v_{i}$, $i = 1, \ldots, 8$, at the discrete shell $\shell$ using $\mu = 10^{-2}$ and $\tau = 10^{-2}$. 
Left: Confusion matrix $\min(\bar\kappa, |\kappa_{ij}|)_{ij}$ for $\bar\kappa = 2.28$ being the absolute value of the $90$th percentile of $|(\kappa_{ij})_{ij}|$, with colour scale $0$ \protect\includegraphics[width=0.175\linewidth]{images/colorbar} $\bar\kappa$. 
Right: Visualization of modes rendered as $\exp_{\shell}(\sigma v_i)$ for $i=1,\ldots,4$ (top) and $i=5,\ldots,8$ (bottom), with $\sigma = 0$ solid and $\sigma \neq 0$ transparent. }
\label{fig:secCurv_DiscreteShells_cactus_dissipation}
\end{figure}
In contrast, in Figure~\ref{fig:secCurv_DiscreteShells_cactus_deform} we consider manually designed deformations $\phi_{1},\ldots,\phi_{16}$ of $\shell$.
These deformations are generated as follows: 
The foot of the cactus (\ie the \emph{fixed handle region}) is fixed while certain \emph{free handle regions} (e.g. the top of the cactus, or tip of the arms) have been deformed smoothly in space. 
In Figure~\ref{fig:secCurv_DiscreteShells_cactus_deform} (top row) we show deformed shapes $\shell_i := \phi_i(\shell)$ for $i = 1, \ldots, 16$.
Finally, we compute the sectional curvatures of the subspaces  spanned by pairs of the discrete logarithms $v_{i}$ of $\shell_i$ at $\shell$ for $i\in \{1,\ldots,16\}$ (computed for $K=8$)
and  show the corresponding confusion matrix with entries $\kappa_{ij} := \kappa_{\shell}^{\pm\tau}(v_i, v_j)$ for $i,j \in \{1,\ldots,16\}$. 
The colour coding of the confusion matrix is as in Figure~\ref{fig:secCurv_DiscreteShells_cactus_dissipation}. Note that we display $|\kappa_{ij}|$ since $\kappa_{ij} < 0$ for almost all choices of $i \neq j$. 
Additionally, we take into account substantially different resolutions and tesselations of the triangulated cactus model (grey, orange and green). 
However, the resulting pattern of the confusion matrices for the different triangulations are still similar, although the scaling of the sectional curvatures varies. 
This appears to be related to the fact that the discrete elastic energy $\energy_\shell(\phi_i)$ already differs significantly when comparing the grey, green and orange model, respectively, 
as shown in the histogram in Figure~\ref{fig:secCurv_DiscreteShells_cactus_deform}.
Compared to Figure~\ref{fig:secCurv_DiscreteShells_cactus_dissipation}, for this set of tangent vectors (based on manual generation) a clearer separation of certain subspaces can be identified in the confusion matrix. 
For example, the subset $\{v_1, v_2, v_3\}$ represents deformations which are solely supported on the \emph{left} arm, 
whereas the subset $\{v_4, \ldots, v_{10}\}$ represents deformations which are mainly supported on the \emph{right} arm. 
\begin{figure}
\centering
\setlength{\unitlength}{.06\linewidth}%
\begin{picture}(15,.5)
\put(-0.25,-0.){\resizebox{\unitlength}{!}{\includegraphics{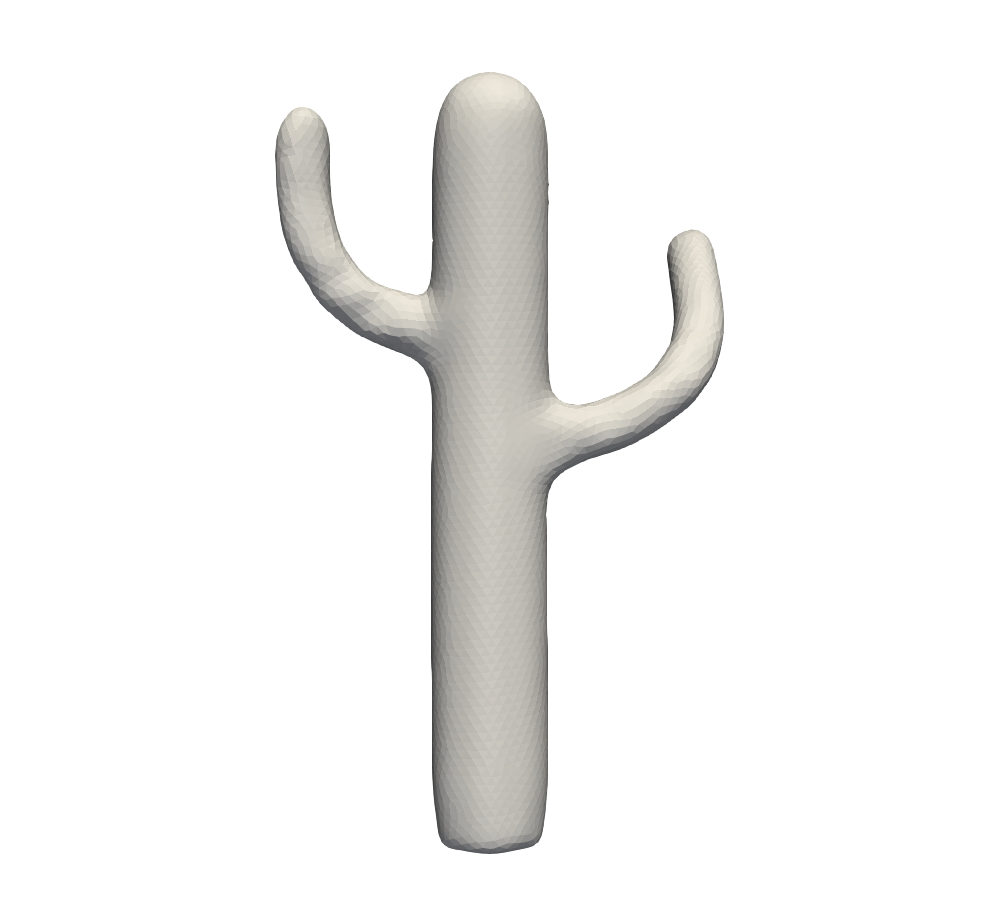}}}
\put(0.75,-0.){\resizebox{\unitlength}{!}{\includegraphics{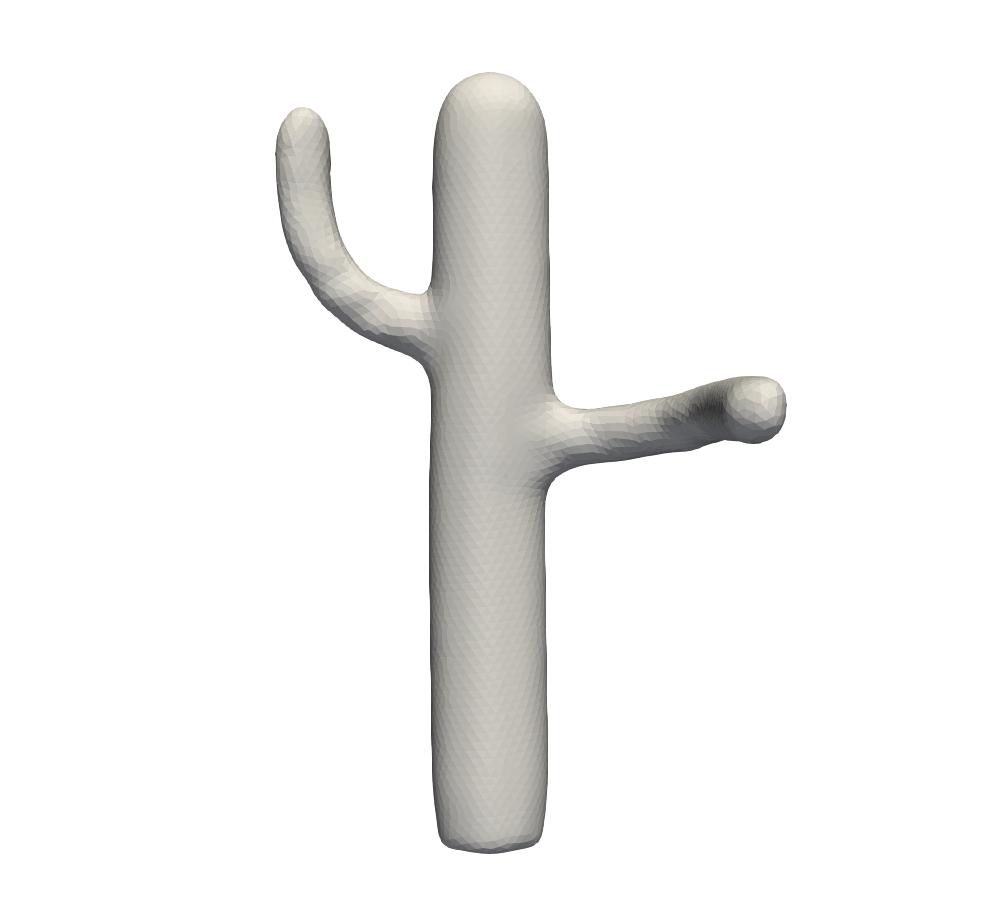}}}
\put(1.75,-0.){\resizebox{\unitlength}{!}{\includegraphics{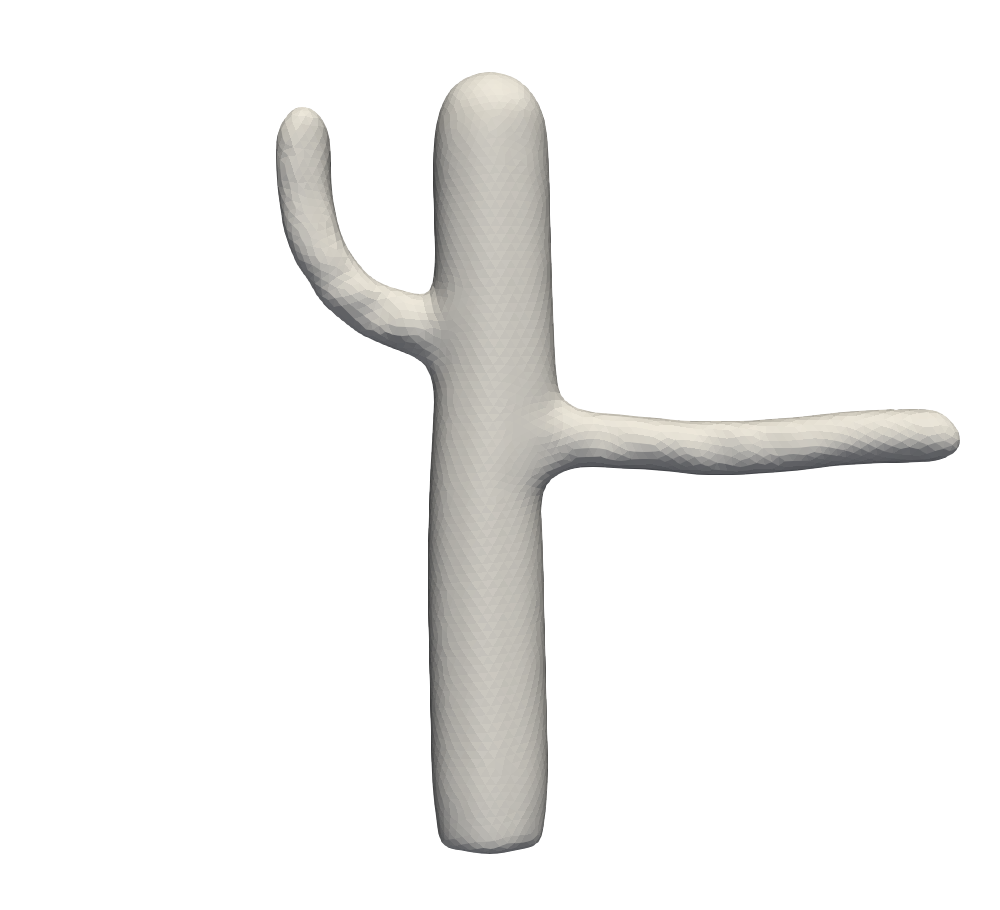}}}
\put(2.75,-0.){\resizebox{\unitlength}{!}{\includegraphics{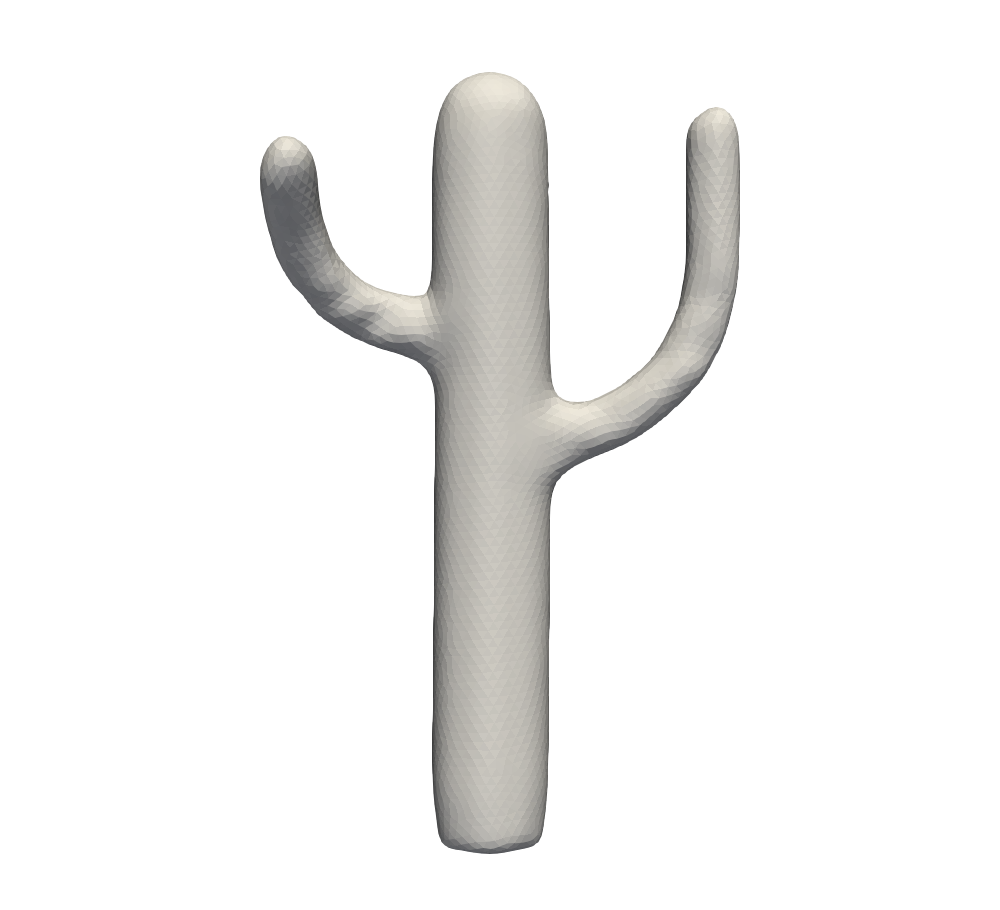}}}
\put(3.75,-0.){\resizebox{\unitlength}{!}{\includegraphics{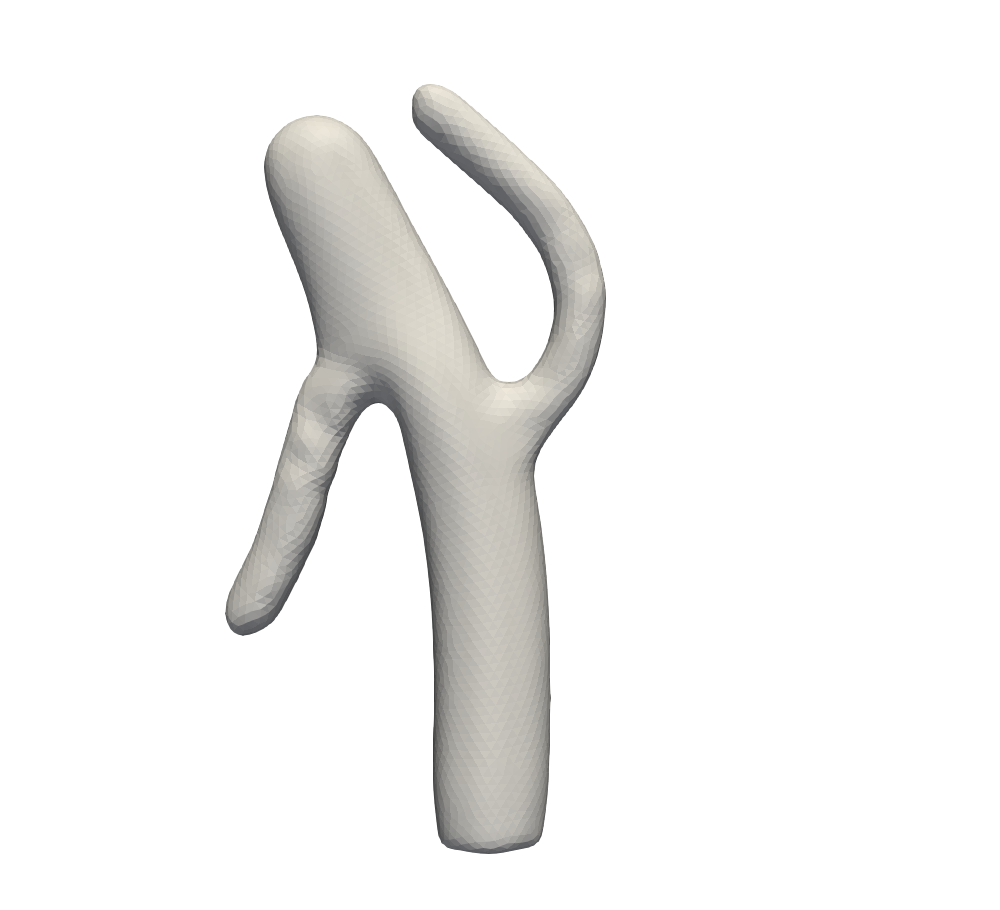}}}
\put(4.75,-0.){\resizebox{\unitlength}{!}{\includegraphics{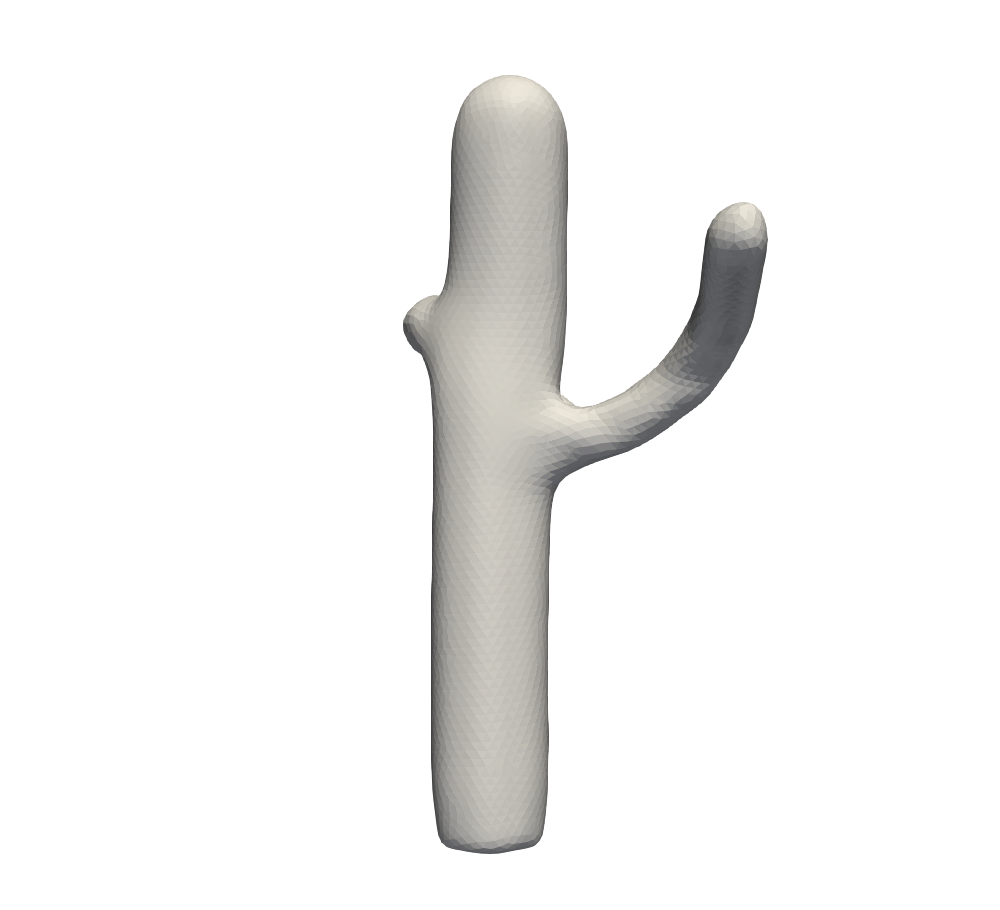}}}
\put(5.75,-0.){\resizebox{\unitlength}{!}{\includegraphics{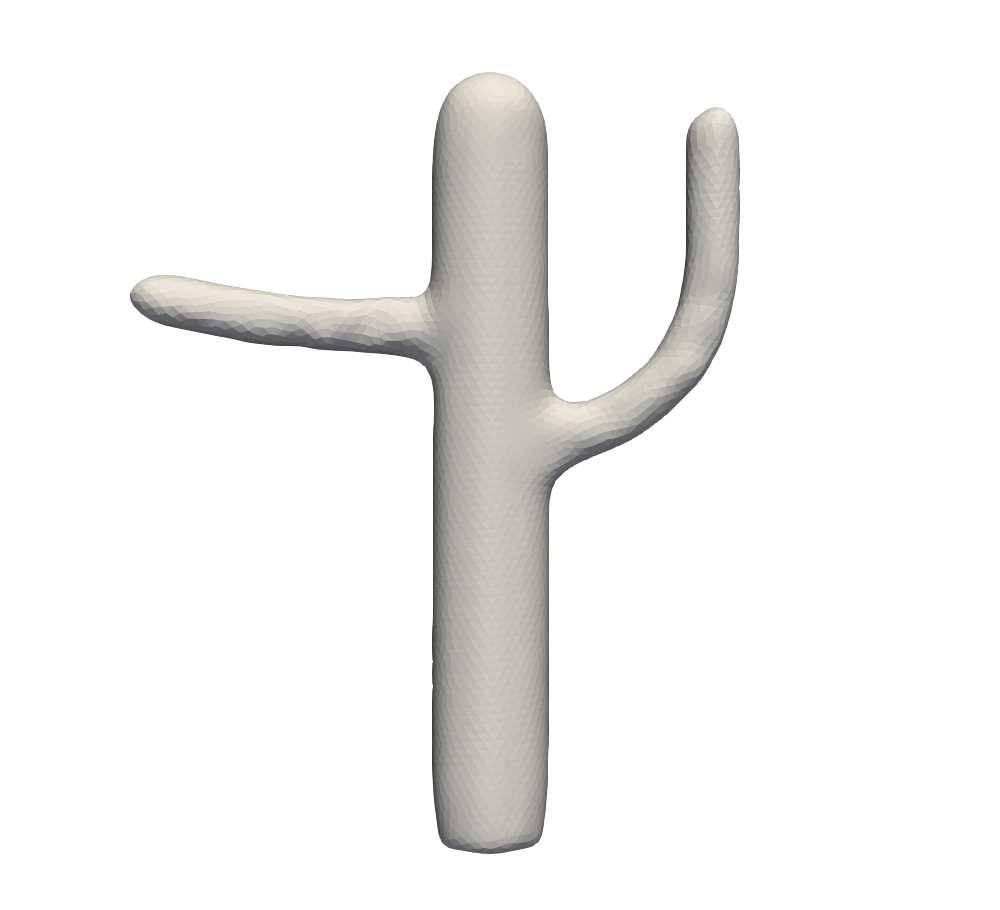}}}
\put(6.7,-0.){\resizebox{\unitlength}{!}{\includegraphics{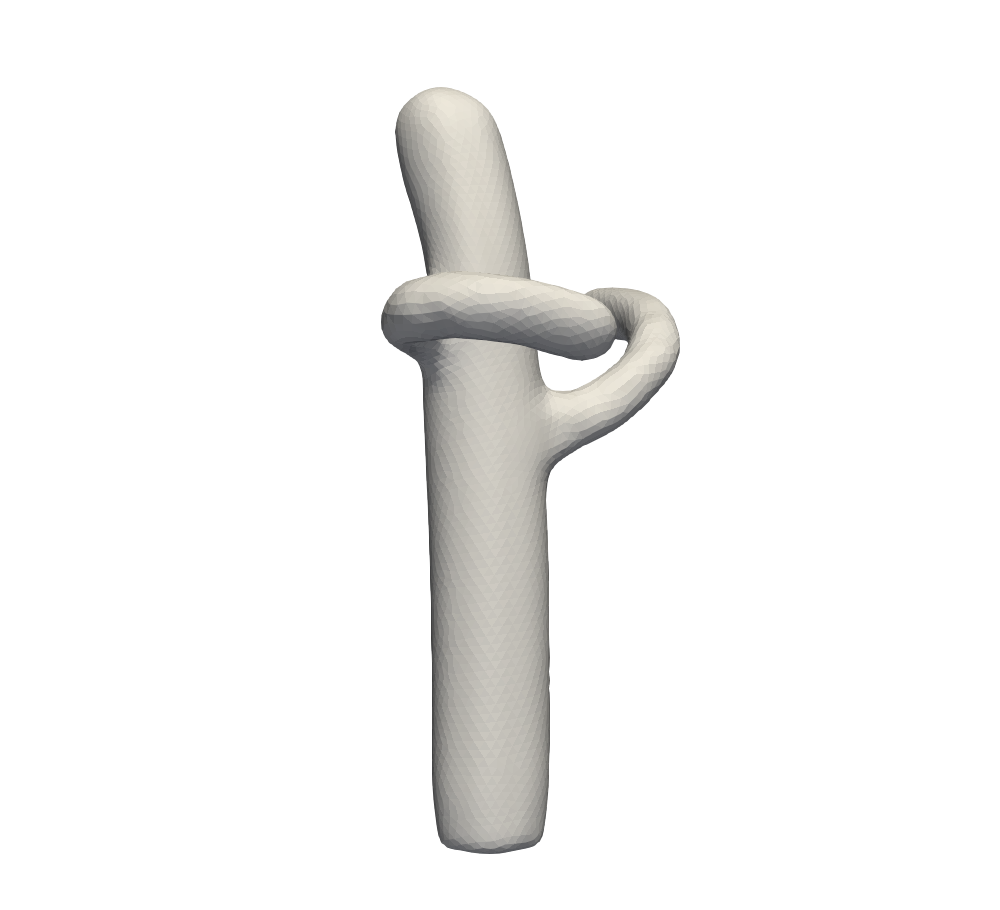}}}
\put(7.7,-0.){\resizebox{\unitlength}{!}{\includegraphics{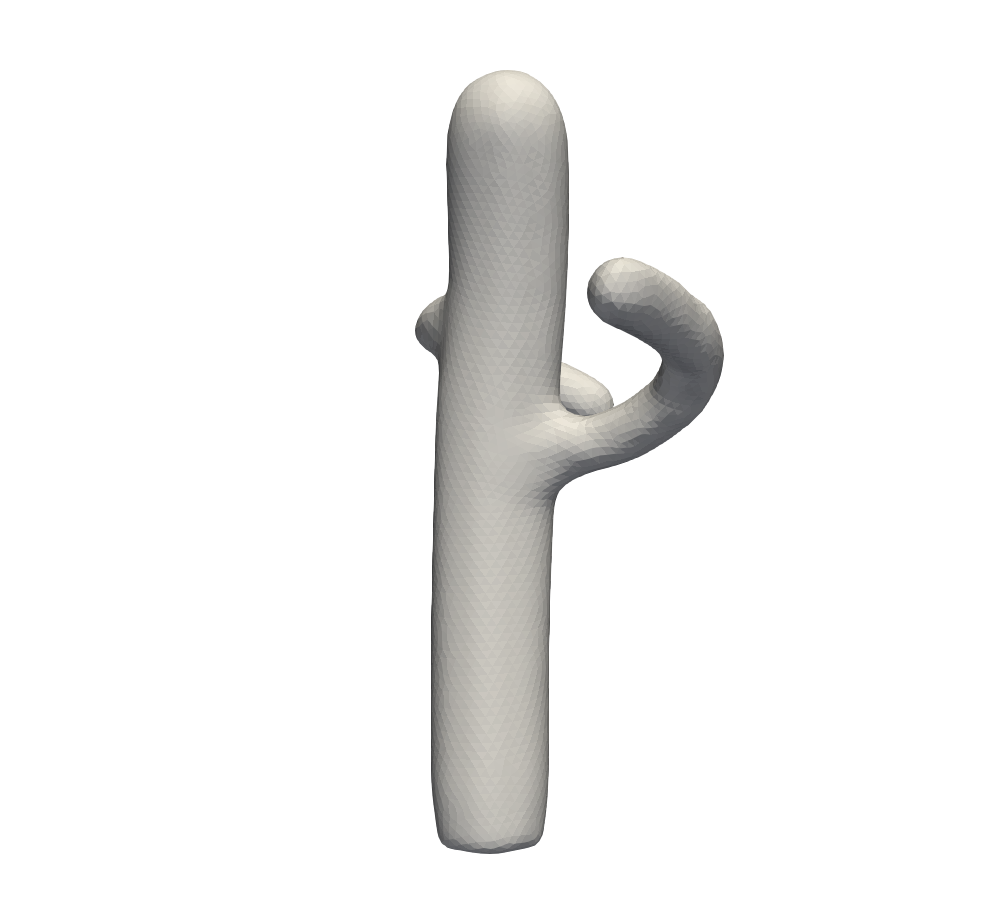}}}
\put(8.7,-0.){\resizebox{\unitlength}{!}{\includegraphics{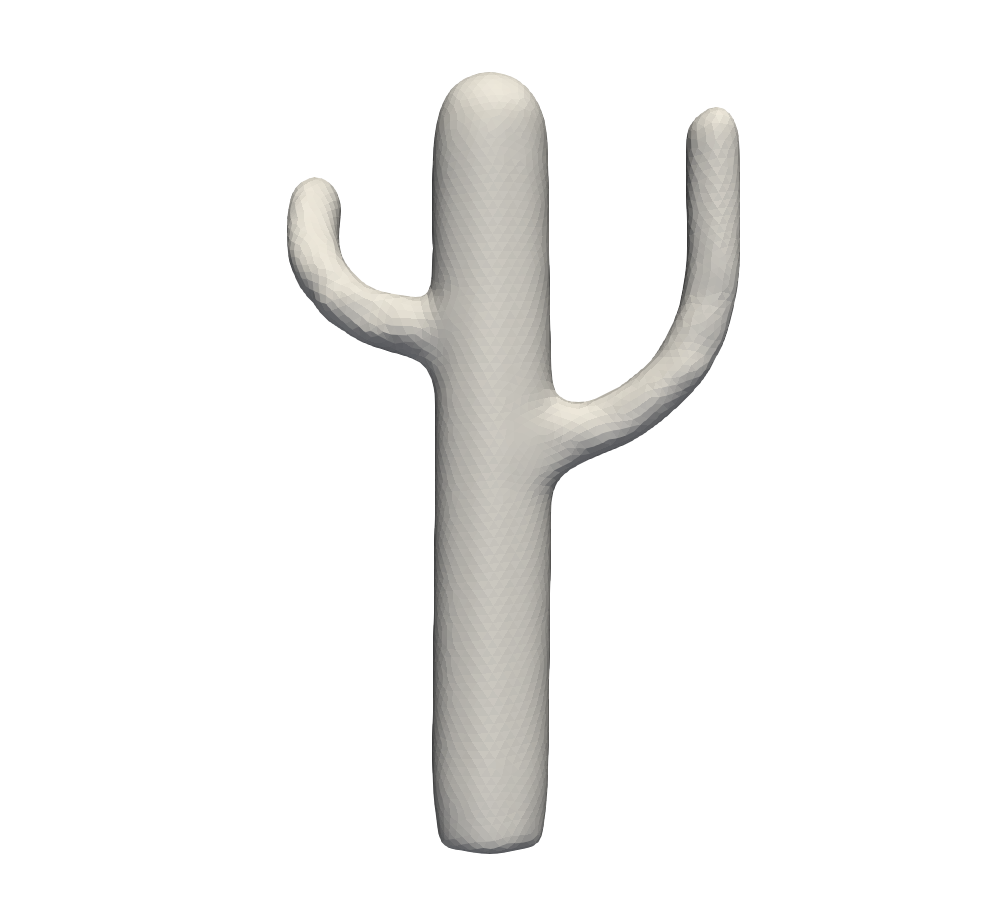}}}
\put(9.75,-0.){\resizebox{\unitlength}{!}{\includegraphics{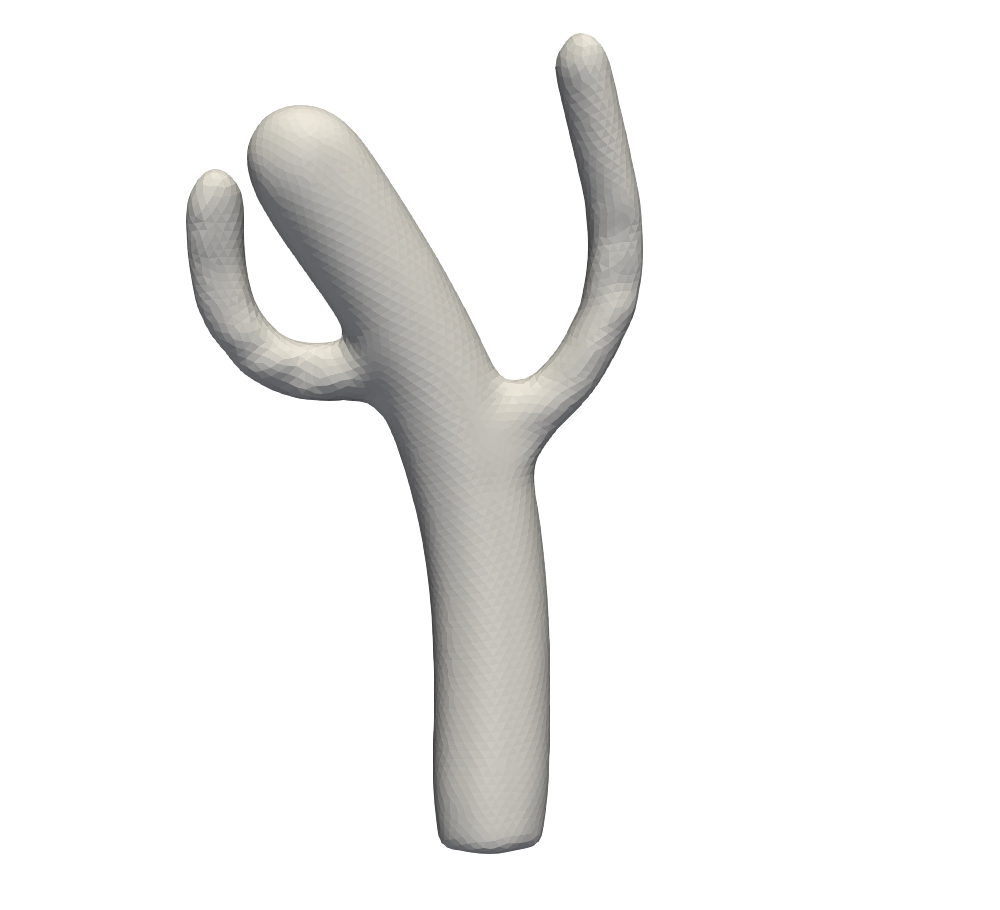}}}
\put(10.75,-0.){\resizebox{\unitlength}{!}{\includegraphics{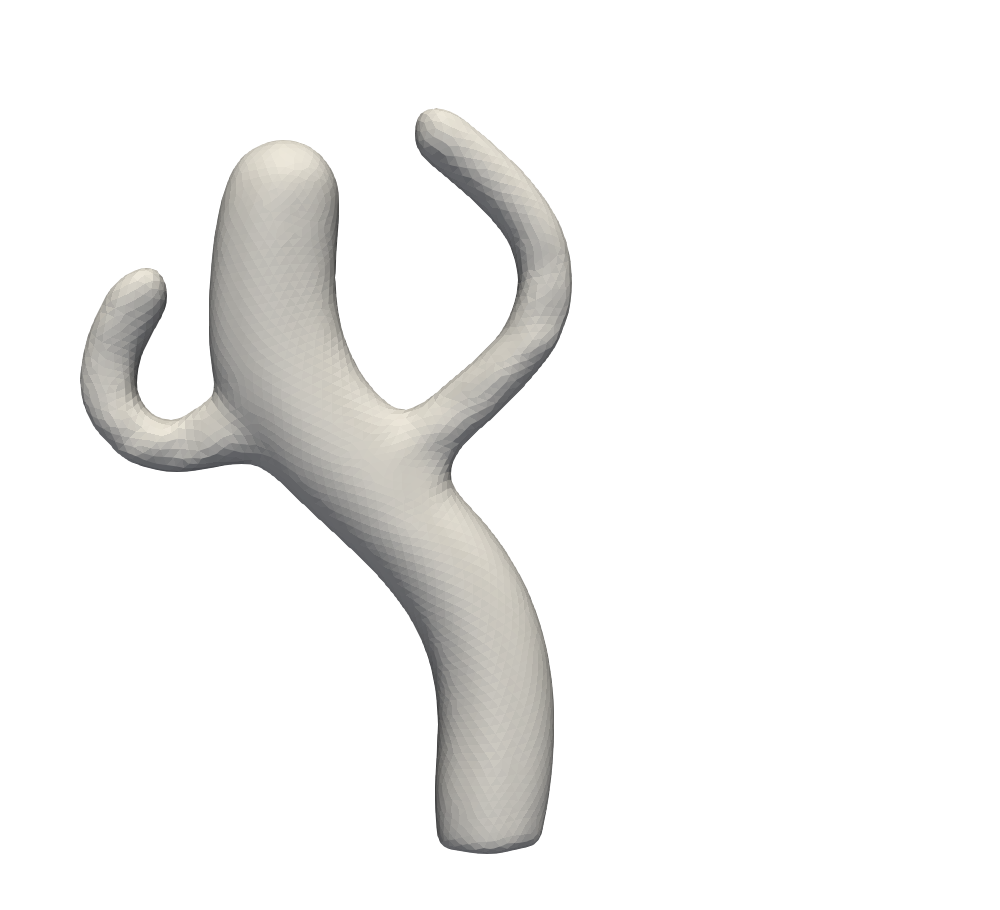}}}
\put(11.5,-0.){\resizebox{\unitlength}{!}{\includegraphics{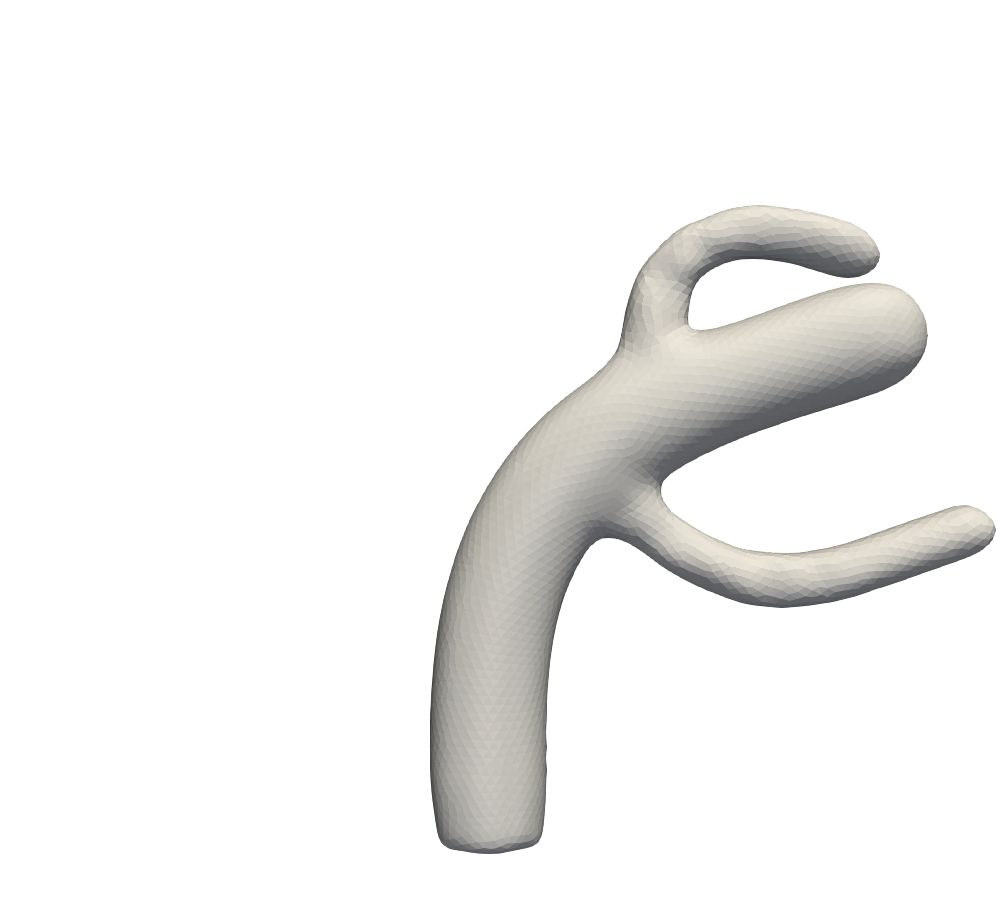}}}
\put(12.75,-0.){\resizebox{\unitlength}{!}{\includegraphics{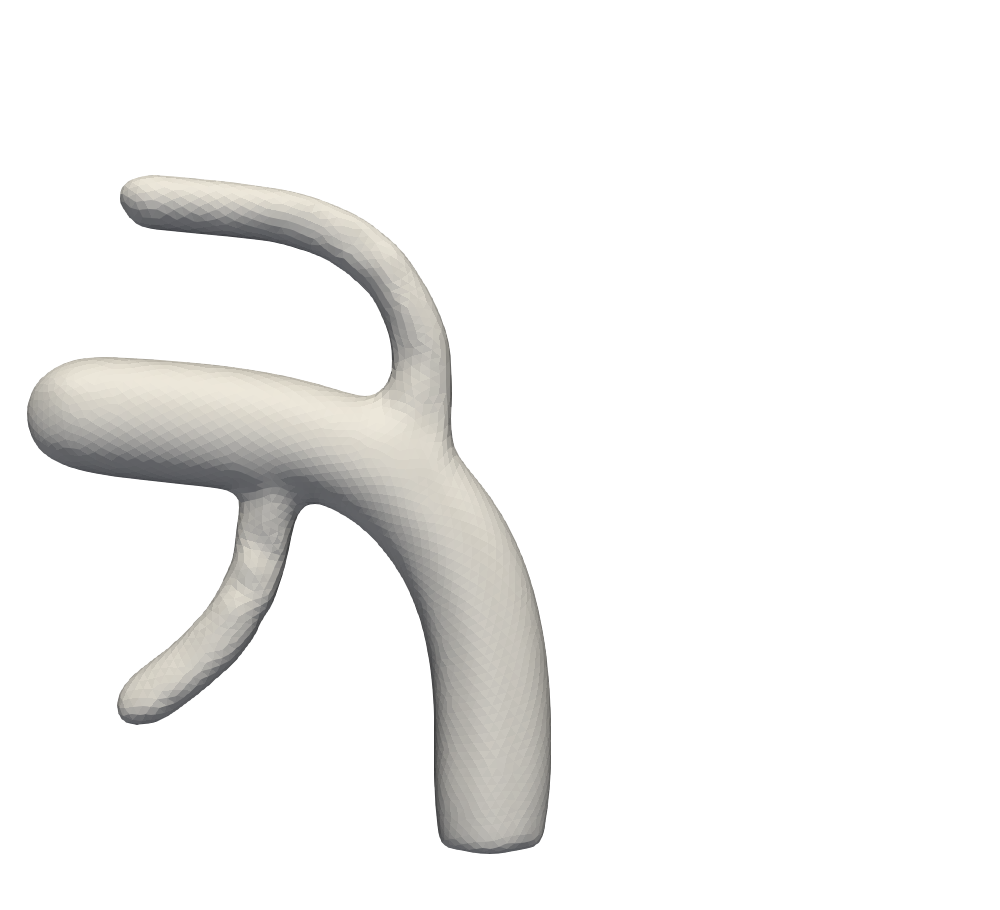}}}
\put(13.6,-0.){\resizebox{\unitlength}{!}{\includegraphics{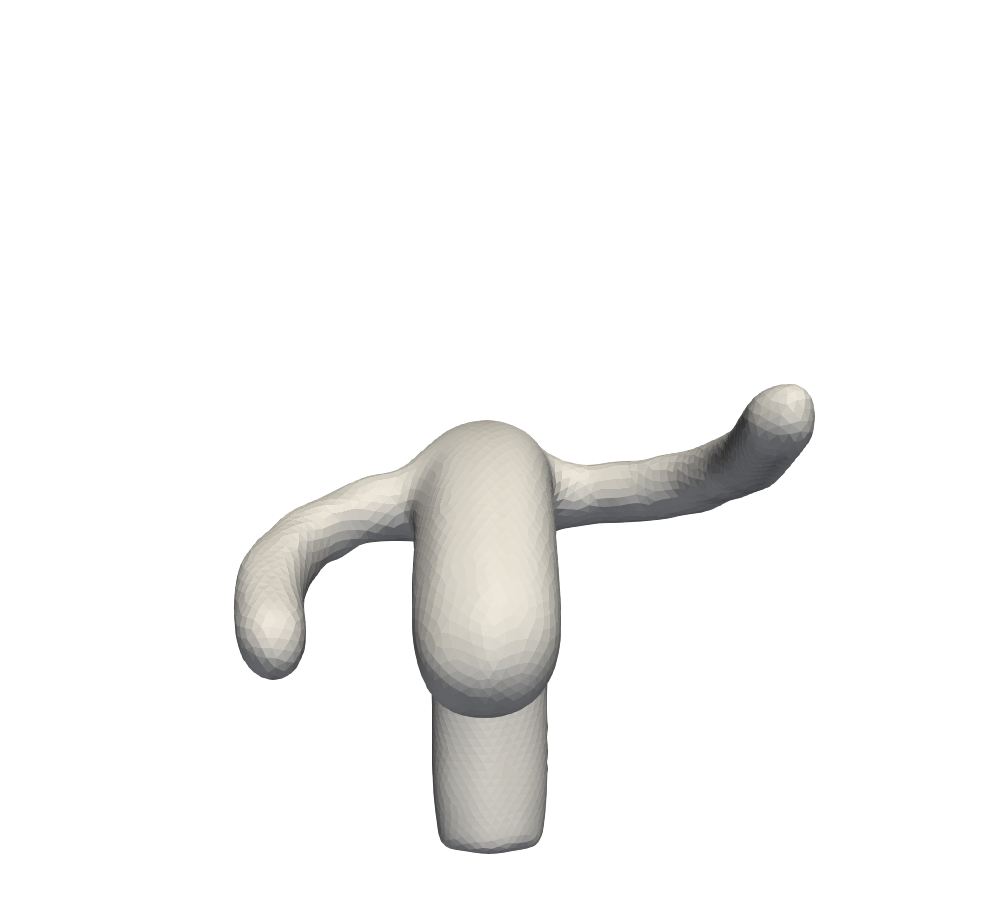}}}
\put(14.6,-0.){\resizebox{\unitlength}{!}{\includegraphics{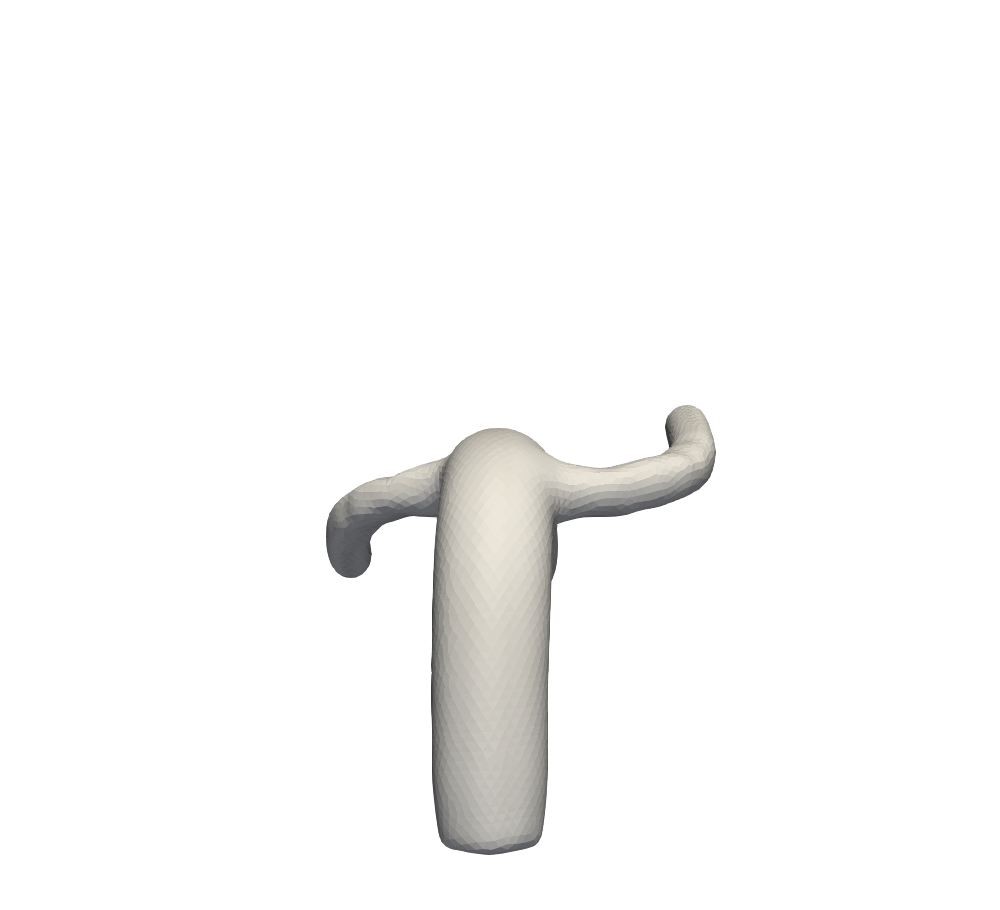}}}
\end{picture}\\[1ex]
\resizebox{0.97\linewidth}{!}{
\begin{tikzpicture}
{\tiny
\begin{axis}[ybar, x=0.2cm, bar width=0.15cm, bar shift=0pt, height=3.5cm, xmin = -1.0, xmax = 63, ymax=0.8, ymin=0, ytick style={draw=none}, yticklabels=none, xtick = { 1,  5,  9,  13,  17,  21,  25,  29,  33,  37,  41,  45,  49,  53,  57,  61}, xticklabels = { 1,  2,  3,  4,  5,  6,  7,  8,  9,  10,  11,  12,  13,  14,  15,  16}]
\addplot[fill=myGrey] coordinates {( 0, 0.0114879) ( 12, 0.0108148) ( 8, 0.0187202) ( 4, 0.0419011) ( 16, 0.126912) ( 20, 0.22293) ( 40, 0.0169677) ( 28, 0.0940827) ( 32, 0.0944281) ( 36, 0.0107281) ( 24, 0.0241642) ( 44, 0.111368) ( 48, 0.0267573) ( 52, 0.0373313) ( 56, 0.0558104) ( 60, 0.0556381) };
\addplot[fill=myOrange] coordinates {( 1, 0.0462706) ( 13, 0.026992) ( 9, 0.0910916) ( 5, 0.154544) ( 17, 0.227931) ( 21, 0.45041) ( 41, 0.0443111) ( 29, 0.314651) ( 33, 0.327007) ( 37, 0.0269065) ( 25, 0.0574385) ( 45, 0.342593) ( 49, 0.0942108) ( 53, 0.132074) ( 57, 0.226599) ( 61, 0.202293) };
\addplot[fill=myGreen] coordinates {( 2, 0.0412061) ( 14, 0.037661) ( 10, 0.0553848) ( 6, 0.149572) ( 18, 0.390864) ( 22, 0.769145) ( 42, 0.0439783) ( 30, 0.617378) ( 34, 0.605802) ( 38, 0.040412) ( 26, 0.0859906) ( 46, 0.317283) ( 50, 0.0917931) ( 54, 0.12603) ( 58, 0.134519) ( 62, 0.133563) };
\end{axis}
}
\end{tikzpicture}}\\[1ex]
{\footnotesize
\begin{minipage}[t]{0.45\textwidth}
\begin{tikzpicture}[scale=0.8]
  \begin{axis}[ point meta min=0.0,
            point meta max=1.0994,
            xmax=17,
            ymax=17,
            xmin=0,
            ymin=0,
            enlargelimits=false,
            axis on top,
            xtick={1, 2, 3, 4,5,  6, 7, 8,9, 10,11, 12, 13, 14,15, 16},
            ytick={1, 2, 3, 4,5,  6, 7, 8,9, 10,11, 12, 13, 14,15, 16},
            width=8cm,
            height = 8cm]        
     \addplot [matrix plot*,point meta=explicit] file [meta=index 2]{images/shells/secCurv_fullCactus_0p01mu_tau0_fixedBdry_FINAL_negative_reordered.dat};
  \end{axis}
\end{tikzpicture}
\end{minipage}
}
\hspace*{5mm}
\begin{minipage}[t]{0.45\textwidth}
 \includegraphics[width = 0.85\linewidth]{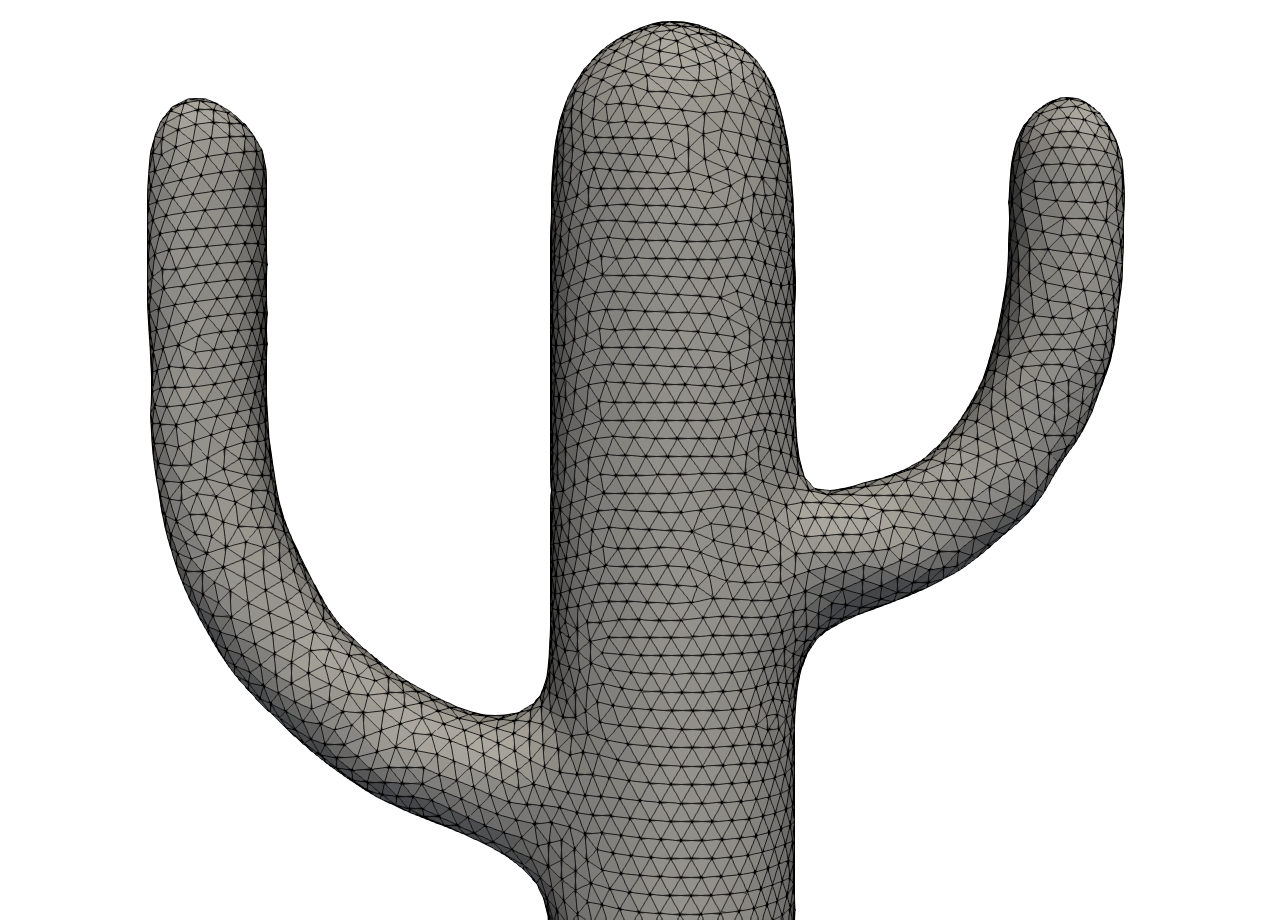}
\end{minipage}\\[1ex]
{\footnotesize
\begin{minipage}[t]{0.45\textwidth}
\begin{tikzpicture} [scale=0.8]
  \begin{axis}[ point meta min=0,
            point meta max=1.27492,
            xmax=17,
            ymax=17,
            xmin=0,
            ymin=0,
            enlargelimits=false,
            axis on top,
            xtick={1, 2, 3, 4,5,  6, 7, 8,9, 10,11, 12, 13, 14,15, 16},
            ytick={1, 2, 3, 4,5,  6, 7, 8,9, 10,11, 12, 13, 14,15, 16},
             width=8cm,
            height = 8cm]        
     \addplot [matrix plot*,point meta=explicit] file{images/shells/secCurv_coarseCactus_0p01mu_tau0_fixedBdry_FINAL_negative_reordered.dat};
  \end{axis}
\end{tikzpicture}
\end{minipage}
}
\hspace*{5mm}
\begin{minipage}[t]{0.45\textwidth}
 \includegraphics[width = 0.85\linewidth]{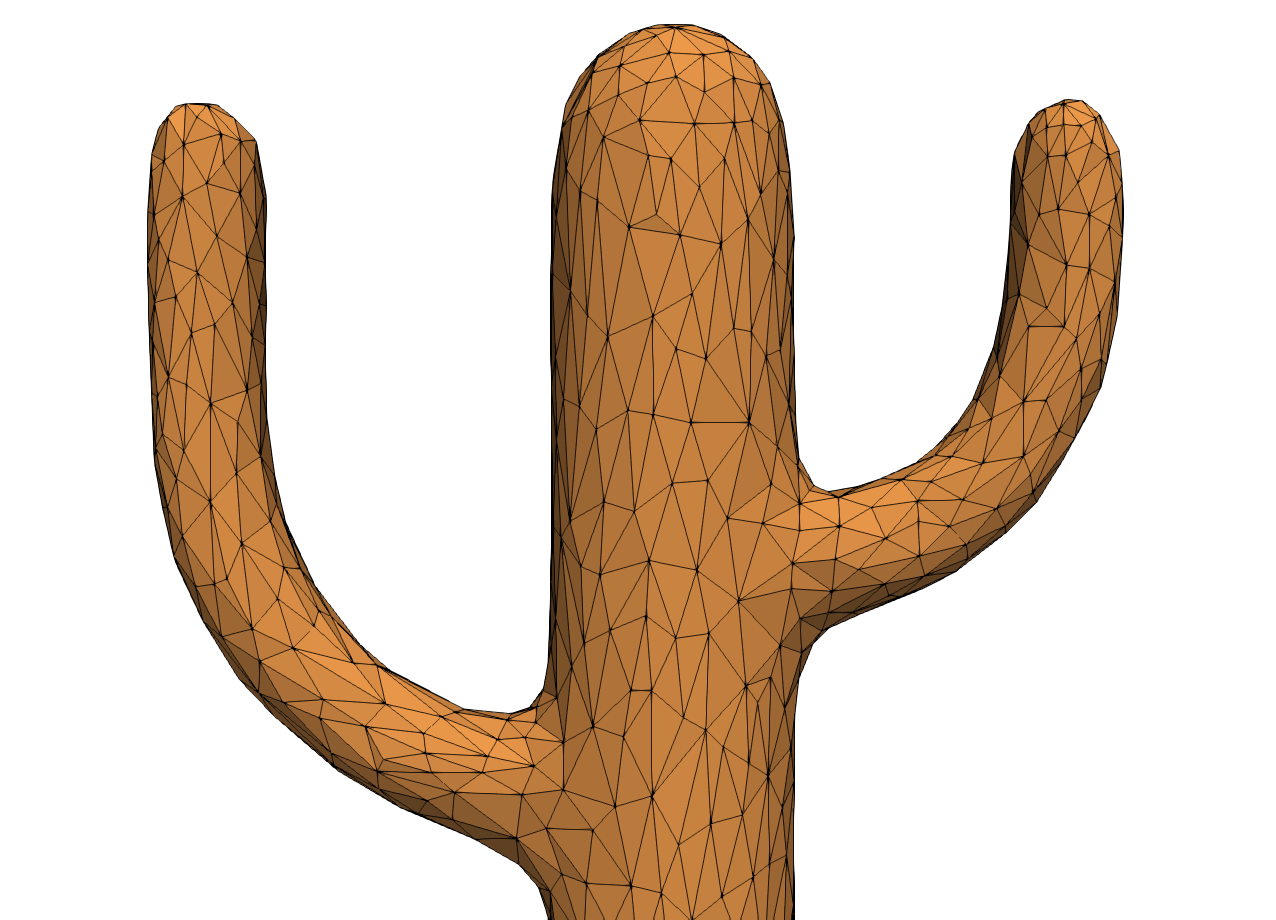}
\end{minipage}\\[1ex]
{\footnotesize
\begin{minipage}[t]{0.45\textwidth}
\begin{tikzpicture} [scale=0.8]
  \begin{axis}[ point meta min=0,
            point meta max=0.249574,
            xmax=17,
            ymax=17,
            xmin=0,
            ymin=0,
            enlargelimits=false,
            axis on top,
          xtick={1, 2, 3, 4,5,  6, 7, 8,9, 10,11, 12, 13, 14,15, 16},
            ytick={1, 2, 3, 4,5,  6, 7, 8,9, 10,11, 12, 13, 14,15, 16},
             width=8cm,
            height = 8cm]        
     \addplot [matrix plot*,point meta=explicit] file{images/shells/secCurv_fullCactus_20pcFlippedEdges_0p01mu_tau0_fixedBdry_FINAL_negative_reordered.dat};
  \end{axis}
\end{tikzpicture}
\end{minipage}
}
\hspace*{5mm}
\begin{minipage}[t]{0.45\textwidth}
 \includegraphics[width = 0.85\linewidth]{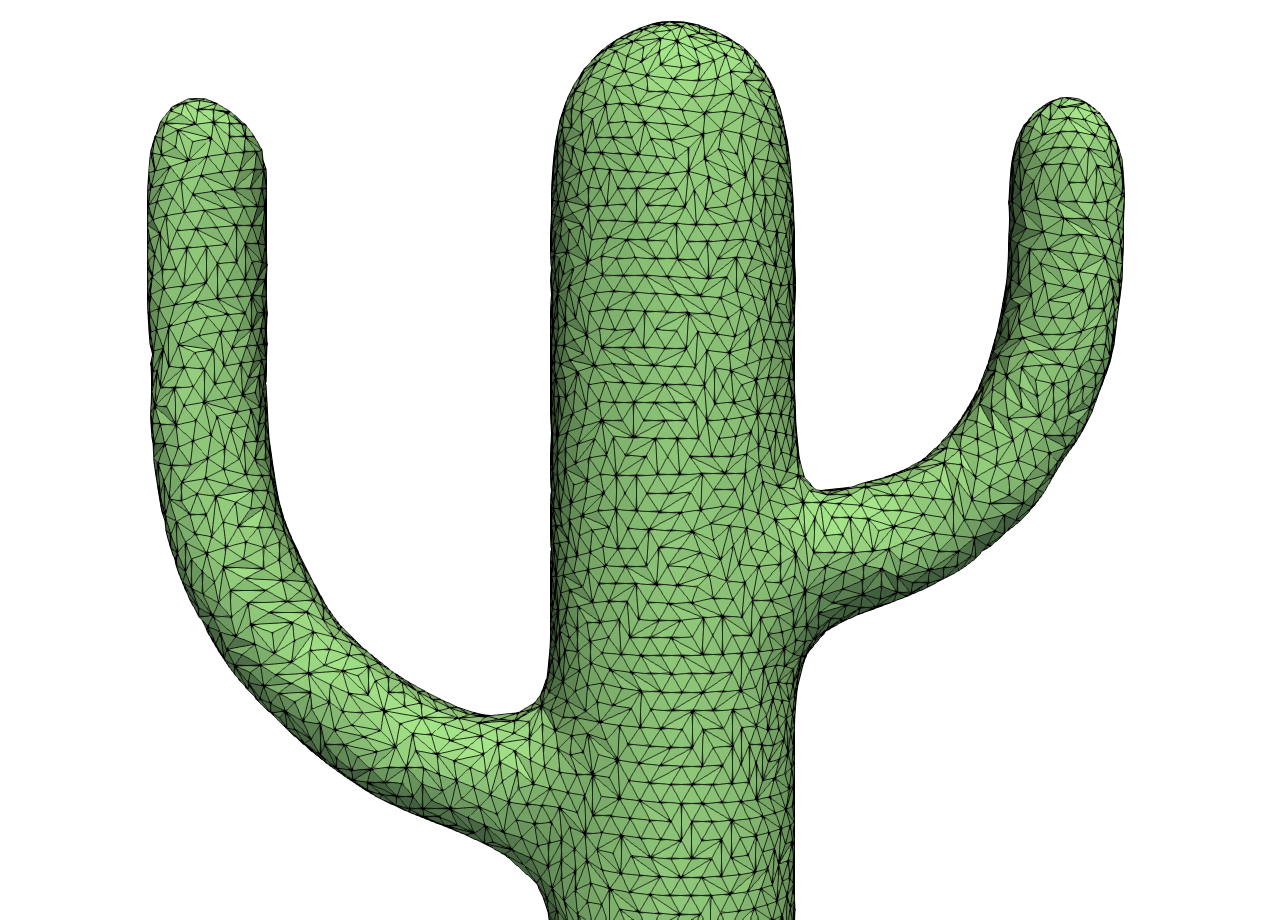}
\end{minipage}
\caption{Top row: the cactus poses $\shell_i = \phi_{i}(\shell)$; second row:  the associated discrete deformation energies $\energy_\shell(\phi_{i})$; 
below: confusion matrices for sectional curvatures $\kappa_{ij}^{\pm\tau} := \kappa_{\shell}(v_i,v_j)$ for $i,j =1,\ldots, 16$ and the underlying triangulation in grey, orange, and green (using $\mu = 10^{-2}$ and $\tau = 10^{-2}$).
For each row we show the confusion matrix $\min(\bar\kappa, |\kappa_{ij}|)_{ij}$ with colour scale $0$ \protect\includegraphics[width=2.5\unitlength]{images/colorbar} $\bar\kappa$. 
Again, $\bar\kappa$ is the absolute value of the $90$th percentile of $|(\kappa_{ij})_{ij}|$, where $\bar\kappa = 1.10$ (grey), $\bar\kappa = 1.27$ (orange) and $\bar\kappa = 0.25$ (green).
} \label{fig:secCurv_DiscreteShells_cactus_deform}
\end{figure}

\paragraph{Positive sectional curvature and nonuniqueness of geodesics.}
It is well-known that estimates of the injectivity radius of the exponential map on a manifold depend on sectional curvature bounds
and in particular that positive sectional curvature may be associated with nonuniqueness of geodesics.
In Figure\,\ref{fig:positiveCurvature} we provide a corresponding example in the space of discrete shells:
We consider a flat rectangle with two bumps pointing in opposite directions,
where the height of the bumps is indicated by the parameters $\zeta$ and $\eta$.
This example was first given in \cite[Fig.\,5]{HeRuWa12} to show nonuniqueness of shortest geodesics.
Indeed, the extremal grey shells in Figure\,\ref{fig:positiveCurvature} correspond to $(\zeta>0,\eta<0)$ and $(\zeta<0,\eta>0)$, respectively.
Thus, to get from one shell to the other, both bumps need to be flipped through.
If the bumps are flipped through simultaneously, this induces a strong distortion energy since then both bumps compete for in-plane area.
Therefore, it is energetically more favourable to flip both bumps through one after another,
and indeed there are two shortest (discrete) geodesics connecting both shells (green and orange) that only differ in the order of the flips.
Intuitively this means that somewhere in between these two geodesics there is a region in shell space
that is avoided by geodesics since (thinking in terms of a chart) there the Riemannian metric becomes large.
At that point in the chart where the Riemannian metric is locally maximal the sectional curvature should be positive.
This intuition is confirmed by computing the sectional curvature at the central flat rectangle in the directions of $\zeta$ and $\eta$,
which turns out to almost reach the value $1000$.
As for the previous example, we confirmed (but do not show here) robustness of this behaviour with respect to triangulation modifications and convergence in $\tau$ and in the triangulation width.

\begin{figure}
\centering
\resizebox{0.6\linewidth}{!}{
\begin{tikzpicture}
\node[inner sep=0pt] (first) at (0,0) {\includegraphics[trim=10 700 10 700, clip,width=.4\textwidth]{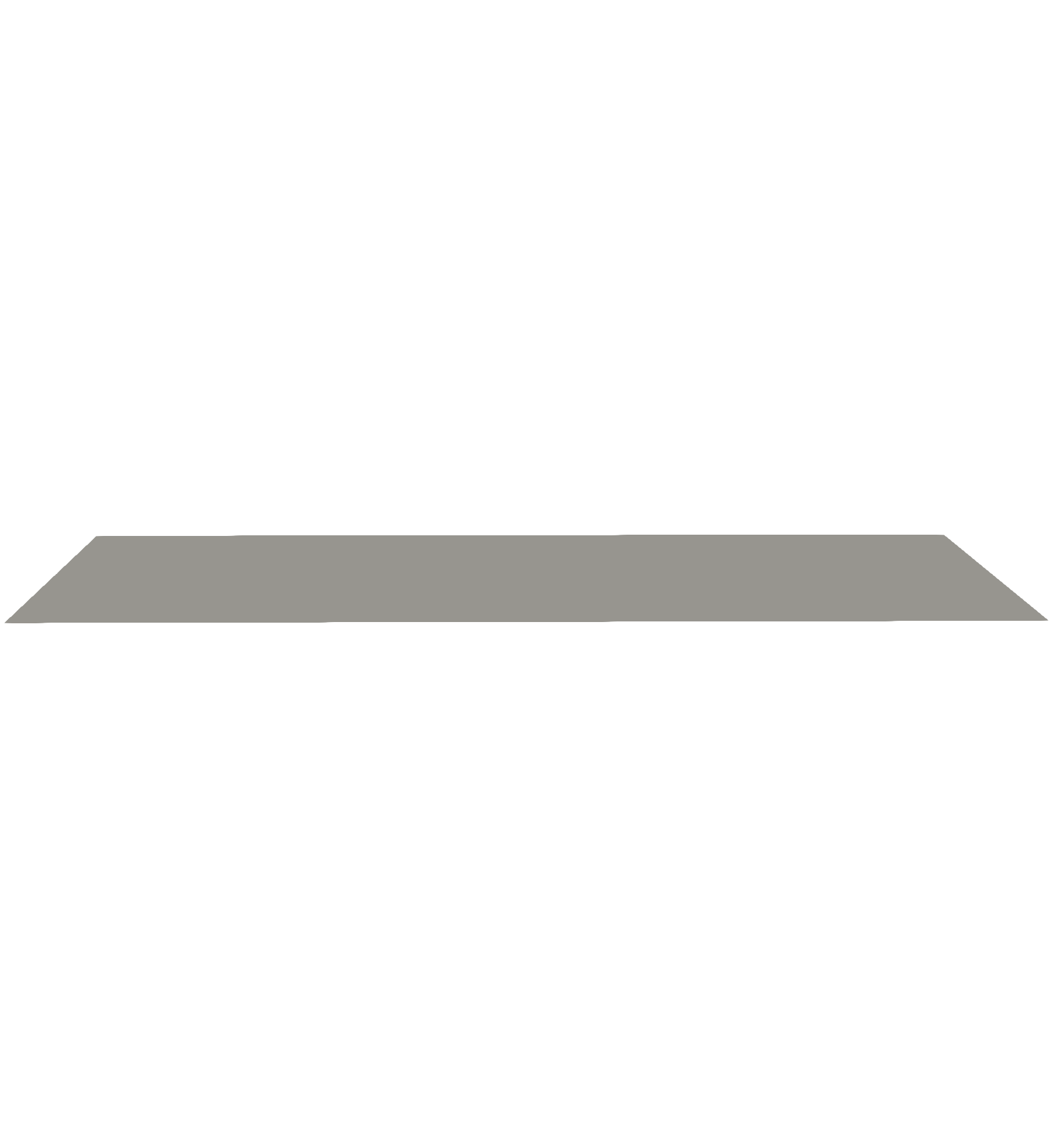}};
\node[inner sep=0pt] (first) at (-6,6.6) {\includegraphics[trim=10 700 10 700, clip,width=.4\textwidth]{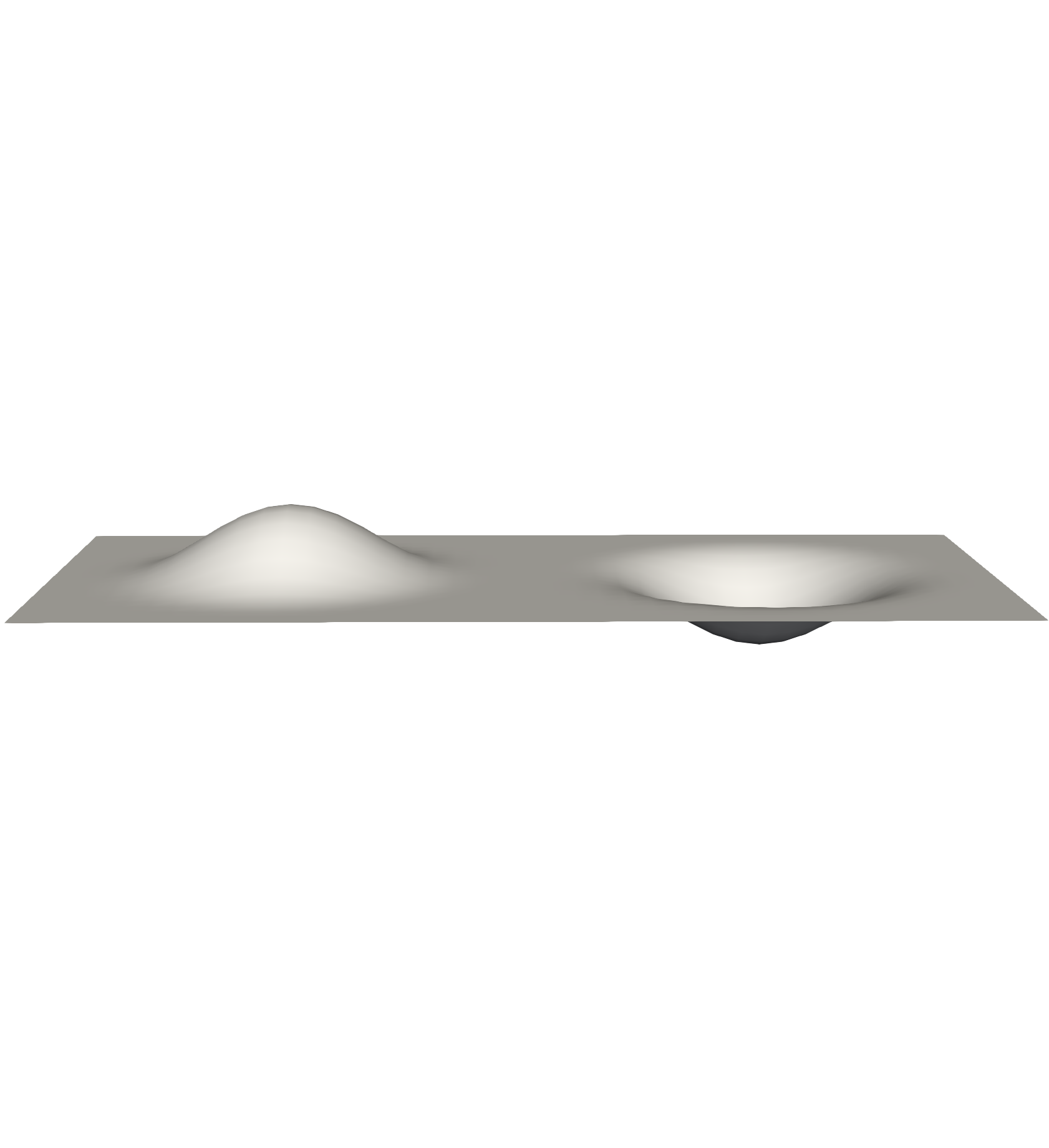}};
\node[inner sep=0pt] (first) at (6, -6.5) {\includegraphics[trim=10 700 10 700, clip,width=.4\textwidth]{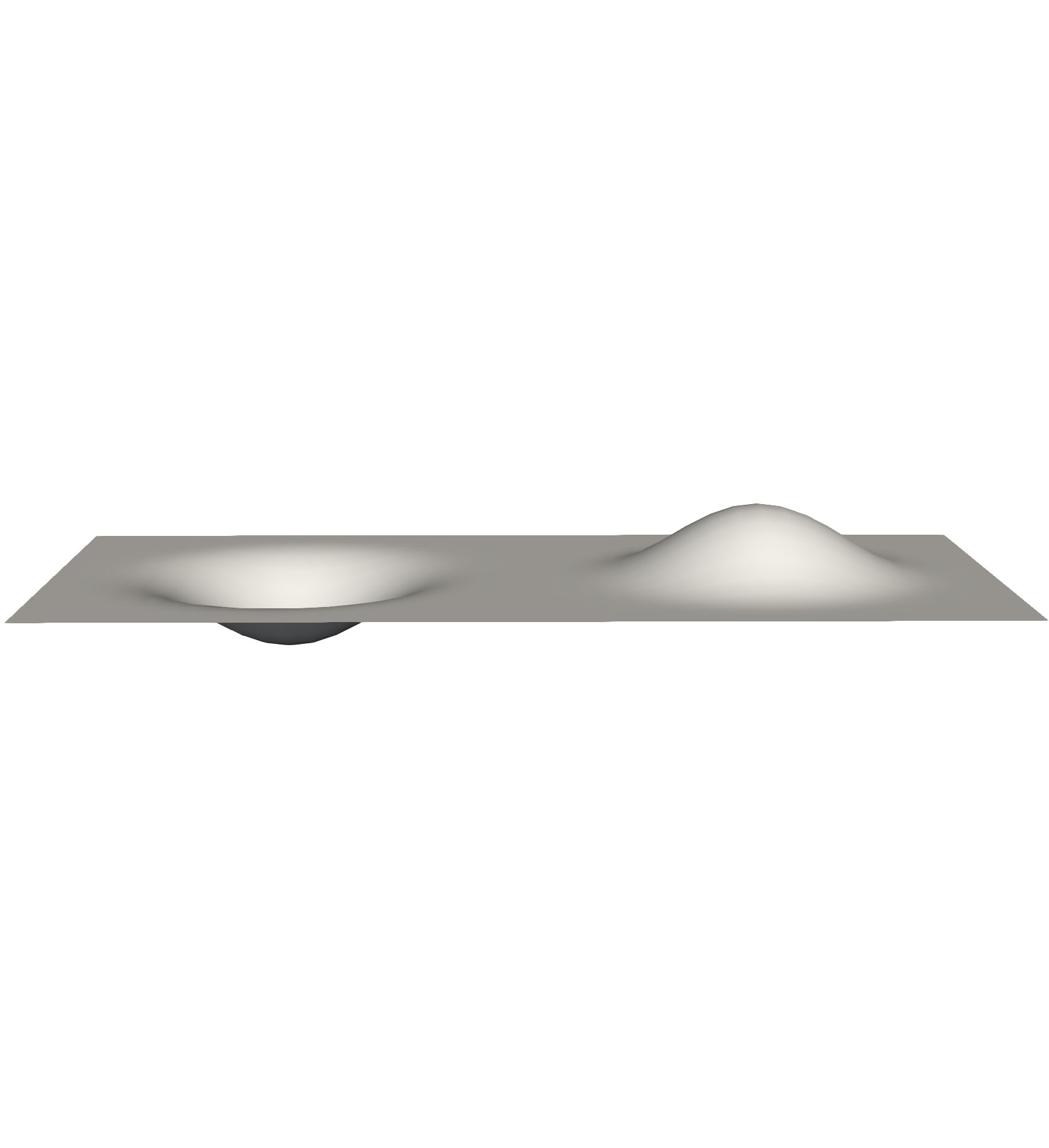}};
\draw [myGreen, line width=3pt, opacity = 0.3, domain=0:90] plot ({-6 + 12*cos(\x)}, {-6 + 12*sin(\x)});
\draw [myOrange, line width=3pt, opacity = 0.5, domain=180:270] plot ({6 + 12*cos(\x)}, {6 + 12*sin(\x)});
\node[inner sep=0pt] (second) at (-1.5,4.3) {\includegraphics[trim=10 700 10 700, clip,width=.4\textwidth]{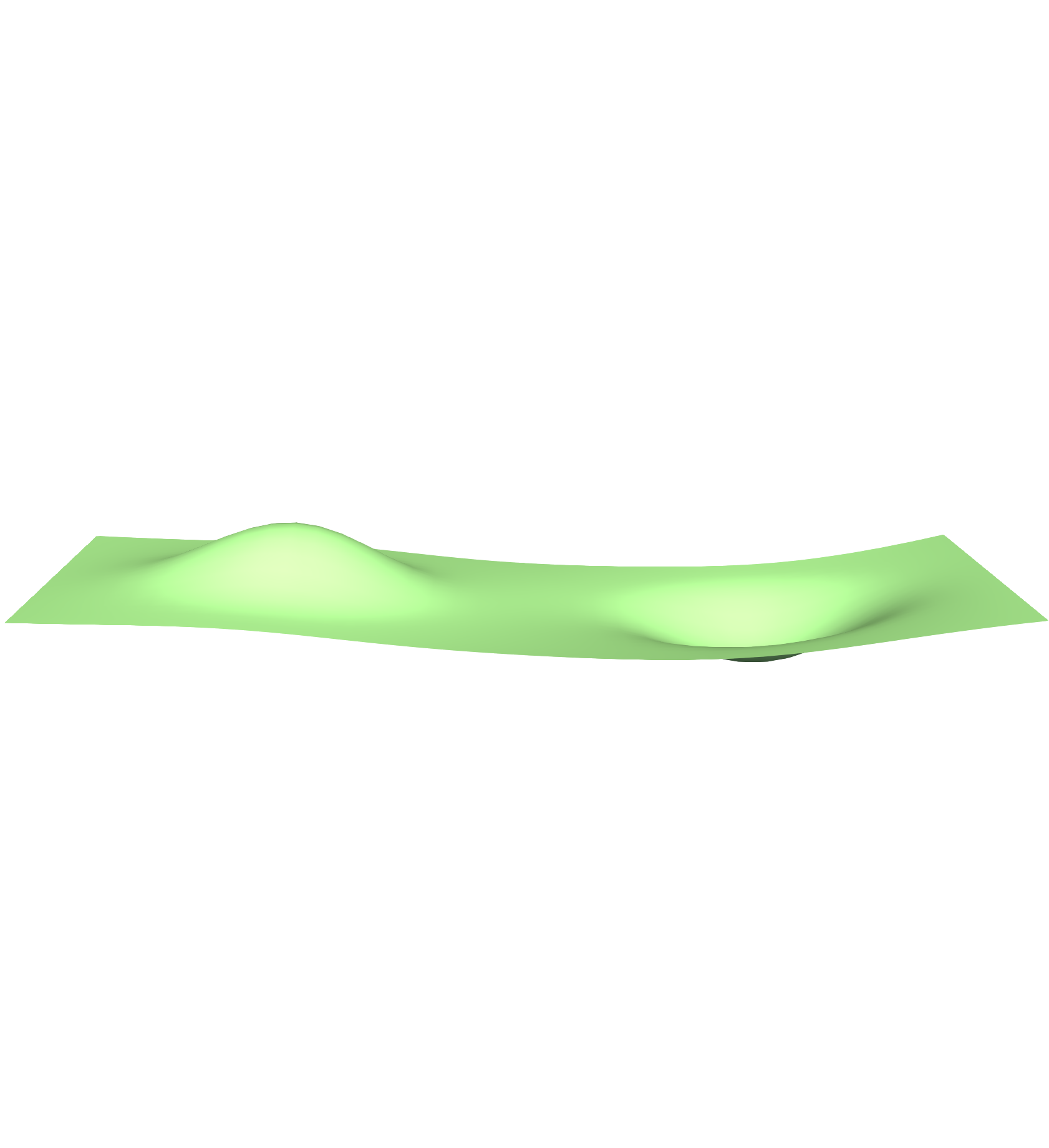}};
\node[inner sep=0pt] (third) at (3,1.75) {\includegraphics[trim=10 700 10 700, clip,width=.4\textwidth]{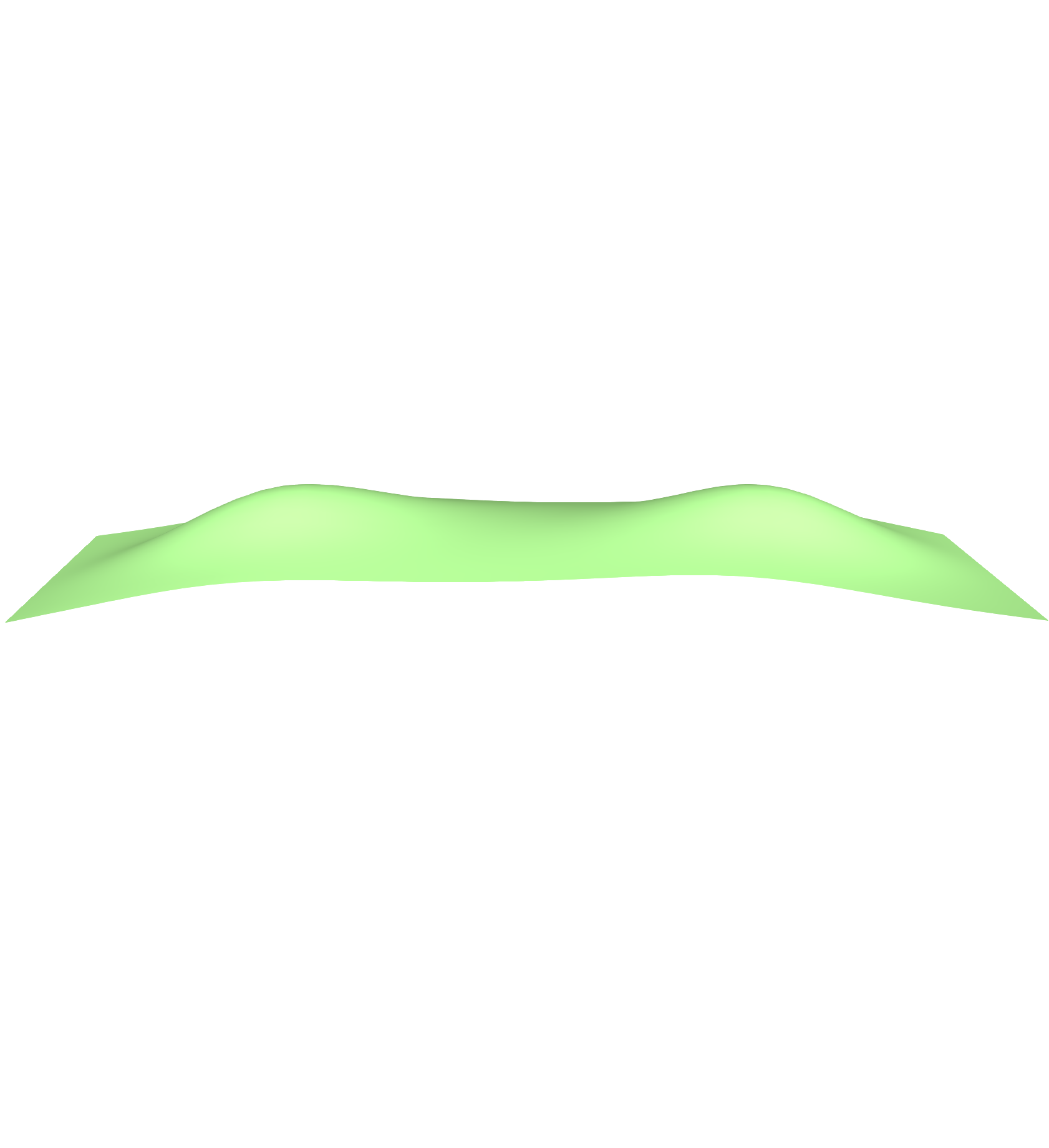}};
\node[inner sep=0pt] (fourth) at (4.9,-2.35) {\includegraphics[trim=10 700 10 700, clip,width=.4\textwidth]{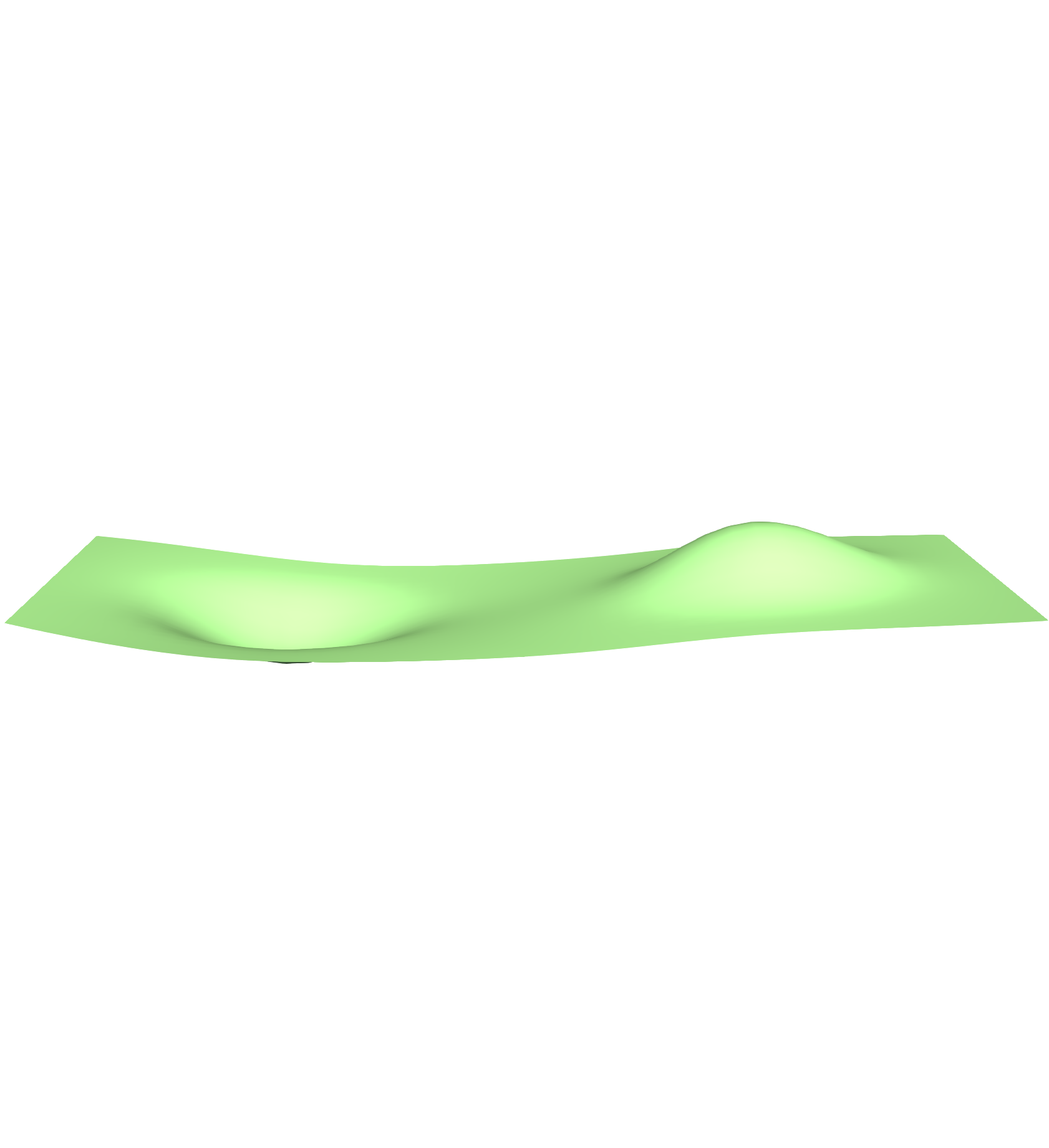}};
\node[inner sep=0pt] (second) at (-5.2,2.05) {\includegraphics[trim=10 700 10 700, clip,width=.4\textwidth]{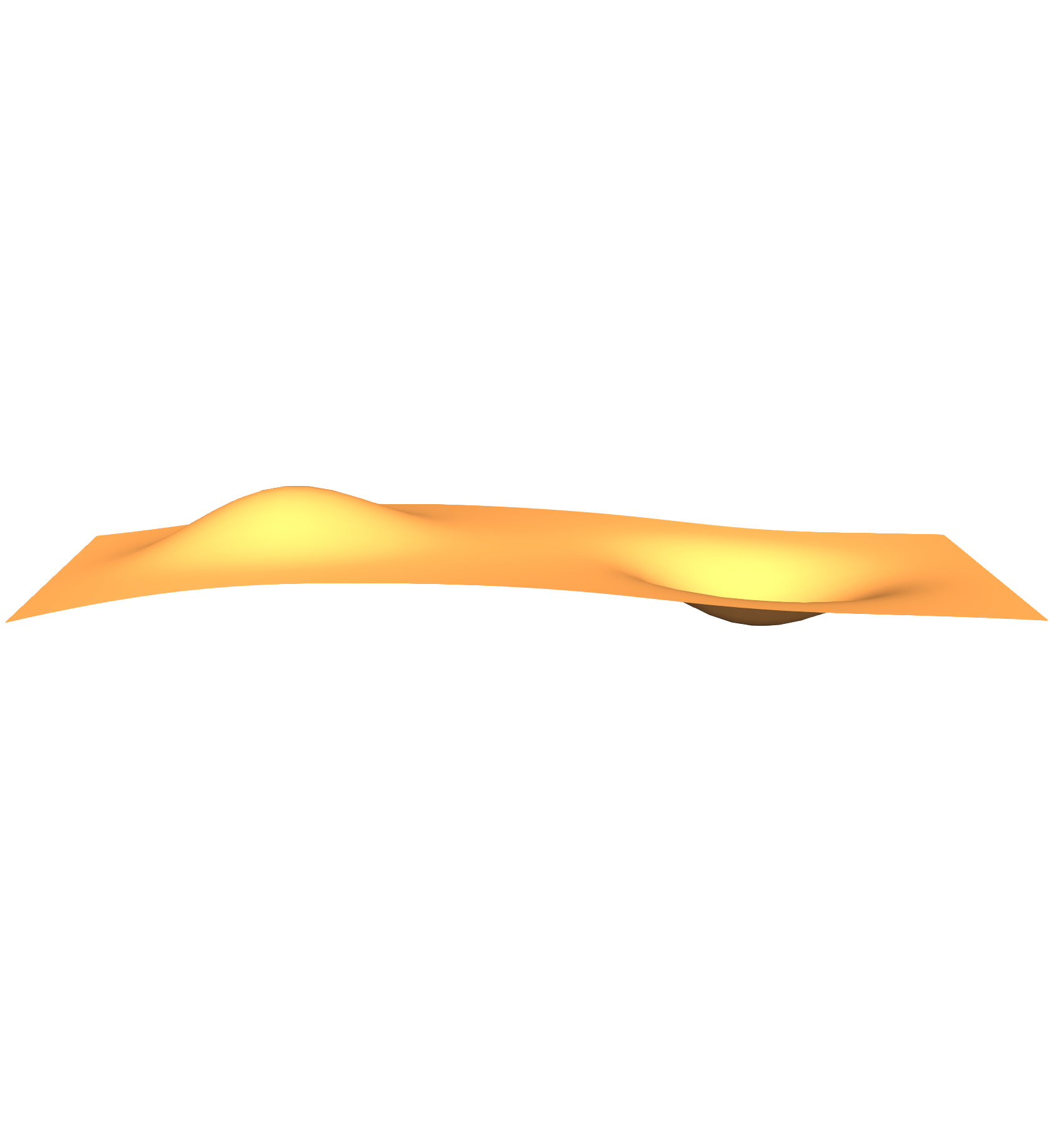}};
\node[inner sep=0pt] (third) at (-3, -1.65) {\includegraphics[trim=10 700 10 700, clip,width=.4\textwidth]{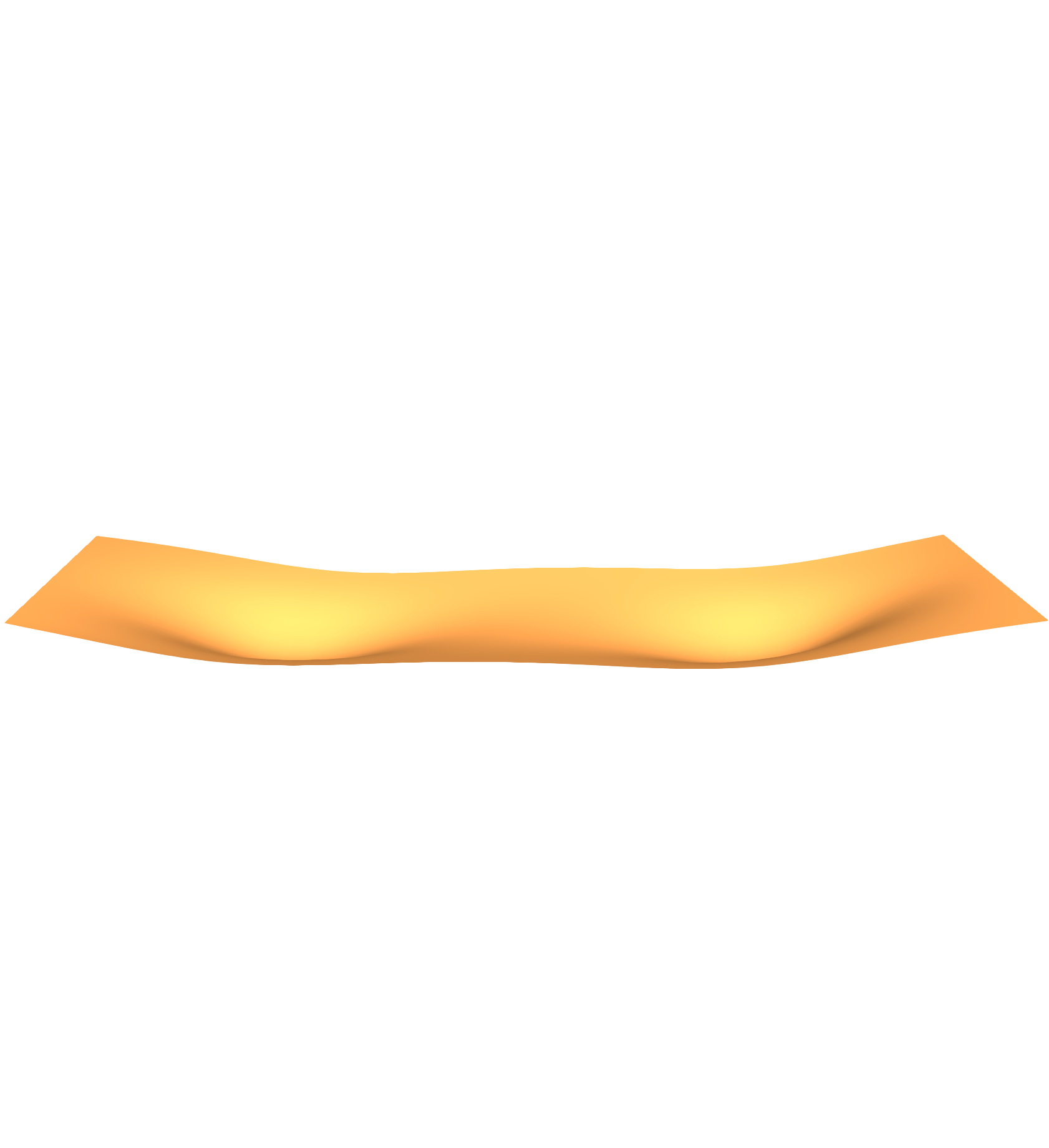}};
\node[inner sep=0pt] (fourth) at (1,-4.5) {\includegraphics[trim=10 700 10 700, clip,width=.4\textwidth]{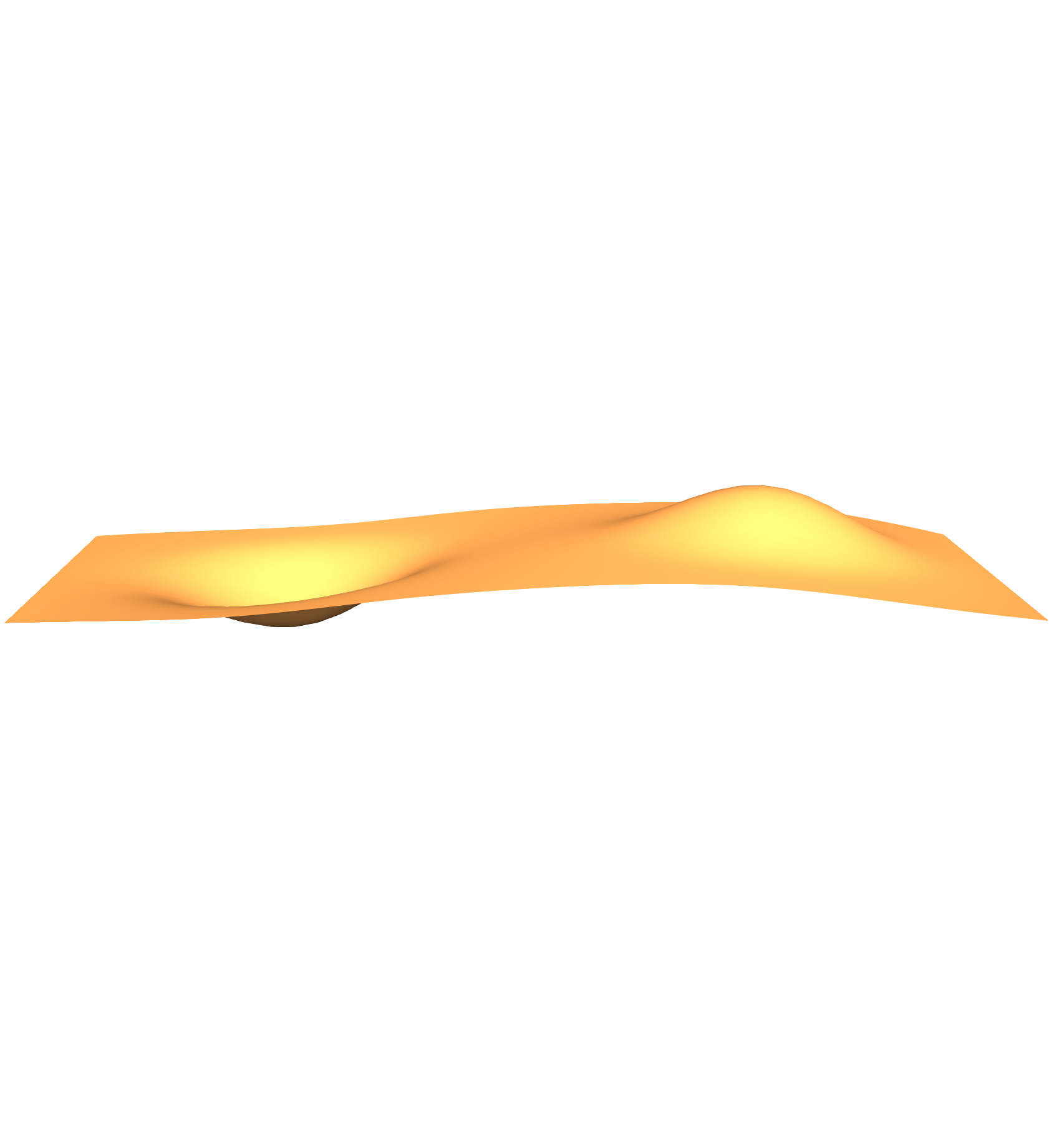}};
\node[inner sep=0pt] (fourth) at (7,4.5) {\includegraphics[trim=10 700 10 700, clip,width=.4\textwidth]{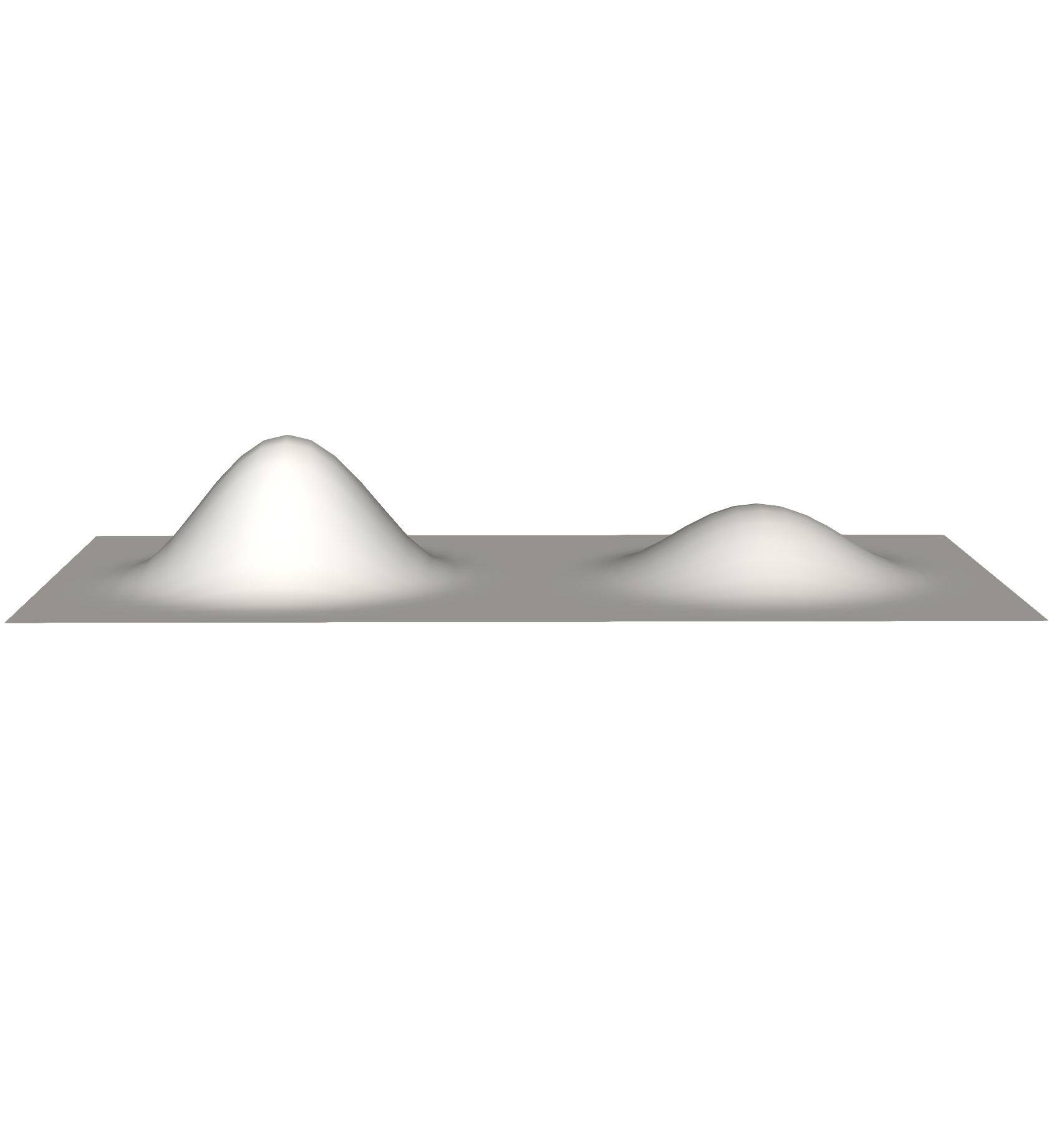}};
\draw[myGrey,fill=myGrey] ({-6 + 12*cos(0)}, {-6 + 12*sin(0)}) circle (.75ex);
\draw[myGrey,fill=myGrey] ({-6 + 12*cos(90)}, {-6 + 12*sin(90)}) circle (.75ex);
\draw[myGreen,fill=myGreen] ({-6 + 12*cos(22.5)}, {-6 + 12*sin(22.5)}) circle (.75ex);
\draw[myGreen,fill=myGreen] ({-6 + 12*cos(45)}, {-6 + 12*sin(45)}) circle (.75ex);
\draw[myGreen,fill=myGreen] ({-6 + 12*cos(67.5)}, {-6 + 12*sin(67.5)}) circle (.75ex);
\draw[myOrange,fill=myOrange] ({6 + 12*cos(202.5)}, {6 + 12*sin(202.5)}) circle (.75ex);
\draw[myOrange,fill=myOrange] ({6 + 12*cos(225)}, {6 + 12*sin(225)}) circle (.75ex);
\draw[myOrange,fill=myOrange] ({6 + 12*cos(247.5)}, {6 + 12*sin(247.5)}) circle (.75ex);
\draw [->,myDarkGrey, line width=2.5pt,domain=0:1] plot ({0}, {6 + \x});
\draw [->,myDarkGrey, line width=2.5pt,domain=0:1] plot ({0}, {-6 - \x});
\draw [->,myDarkGrey, line width=2.5pt,domain=0:1] plot ({7 + \x}, {0});
\draw [->,myDarkGrey, line width=2.5pt,domain=0:1] plot ({-7 - \x}, {0});
\node[inner sep=0pt, text=myDarkGrey] at (-0.5,7.0) {\huge $\zeta$};
\node[inner sep=0pt, text=myDarkGrey] at (-0.7,-7.0) {\huge $-\zeta$};
\node[inner sep=0pt, text=myDarkGrey] at (8,-0.5) {\huge $\eta$};
\node[inner sep=0pt, text=myDarkGrey] at (-8,-0.5) {\huge $-\eta$};
\draw [<->,red, line width=2.0pt,domain=0:1] plot ({5.5}, {4.3 + 0.96*\x});
\draw [<->,red, line width=2.0pt,domain=0:1] plot ({8.5}, {4.3 + 0.6*\x});
\node[inner sep=0pt, text=red] at (4.85,5.2) {\huge $\zeta$};
\node[inner sep=0pt, text=red] at (8.95,5.0) {\huge $\eta$};
\draw [myGrey, line width=2.0pt ] (3.5,3.5) rectangle (10.5,6.0);
\end{tikzpicture}
}
\caption{The two extremal grey shells are connected by two distinct shortest geodesics, the green and the orange one.
This behaviour is associated with a strongly positive sectional curvature $\kappa^{\pm\tau}=871.34$ (for $\tau=0.01$) at the central shell in the directions of $\zeta$ and $\eta$.}
\label{fig:positiveCurvature}
\end{figure}

\subsection*{Acknowledgements}
Alexander Effland acknowledges support from the European Research Council under the Horizon 2020 program, ERC starting grant HOMOVIS (No. 640156).
Behrend Heeren and Martin Rumpf acknowledge support of the Collaborative Research Center 1060 fun\-ded by 
the Deutsche Forschungsgemeinschaft (DFG, German Research Foundation)  
and the Hausdorff Center for Mathematics, funded by the DFG
under Germany's Excellence Strategy - GZ 2047/1, Project-ID 390685813.
Benedikt Wirth was supported by the Alfried Krupp Prize for Young University Teachers awarded by the Alfried Krupp von Bohlen und Halbach-Stiftung 
and by the Deutsche Forschungsgemeinschaft (DFG, German Research Foundation) via Germany's Excellence Strategy through the Cluster of Excellence ``Mathematics M\"unster: Dynamics -- Geometry -- Structure'' (EXC 2044) at the University of M\"unster.

\bibliographystyle{alpha}
\bibliography{bibtex/all,bibtex/own}

\end{document}